\newtheorem{thm}{Theorem}
\newtheorem{cor}[thm]{Corollary}
\newtheorem{prop}[thm]{Proposition}
\newtheorem{lem}[thm]{Lemma}
\newtheorem{conj}[thm]{Conjecture}
\newcommand{\vep}{\varepsilon}
\newcommand{\FF}{\mathbb{F}}
\newcommand{\F}{\mathcal{F}}
\newcommand{\PPr}{P^{(r)}}
\newcommand{\Cr}[1]{C^{(r)}_{#1}}
\newcommand{\CC}{\mathcal{C}}
\newcommand{\kgmcomment}[1]{}
\newcommand{\calebcomment}[1]{}
\newcommand{\st}{\colon\,}
\newcommand{\ceil}[1]{\left\lceil #1 \right\rceil}
\newcommand{\ex}{\mathrm{ex}}
\newcommand{\PSG}{\mathrm{PSG}}
\newcommand{\acyc}{a}
\newcommand{\trans}[2]{\tau_{#1}(#2)}
\newcommand{\thresh}[2]{a_{#1}({#2})}
\newcommand{\canptn}{\rho}
\renewcommand{\emptyset}{\varnothing}
\newcommand{\E}{\mathbb{E}}
\newcommand{\cycle}[1]{\left\langle#1\right\rangle}
\title{Tight paths in fully directed hypergraphs}
\author{Richard C. Devine \and Kevin G. Milans}
\begin{document}

\maketitle
\abstract{
It is well-known that every tournament has a spanning path.  We consider hypergraph analogues.  In an \emph{$r$-uniform fully directed hypergraph}, or \emph{$r$-digraph}, every edge is a list of $r$ distinct vertices.  An $(r,k)$-tournament is an $r$-digraph $G$ such that for every $r$-set $S$ of vertices in $G$, exactly $k$ of the orderings of $S$ are edges in $G$.  A \emph{directed tight path} is an $r$-digraph $G$ whose vertices can be ordered so that the intervals of size $r$ are the edges in $G$.  Let $f(n,r,k)$ be the maximum $s$ such that every $n$-vertex $(r,k)$-tournament contains a tight path on $s$ vertices.  Since every tournament has a spanning path, we have $f(n,2,1)=n$.  

In this paper, we show that the minimum $k$ such that $f(n,r,k)$ tends to infinity with $n$ is in the interval $\left[\left(1-\frac{1}{r}-O(\frac{\log r}{r^2\log\log r})\right)r!, ~\left(1-\frac{1}{r} - \frac{\varphi(r)-1}{r!}\right)r!\right]$ where $\varphi(r)$ is the Euler Totient Function, and we find the exact value when $r\le 5$.  We also show that $\Omega(\sqrt{\log n/\log \log n}) \le f(n,3,3) \le O(\log n)$ and $f(n,3,4)\ge \Omega(n^{1/5})$.
}

\section{Introduction}

In a \emph{directed graph} or \emph{digraph}, each edge is an ordered pair of vertices.  There are several natural generalizations to hypergraphs, such as partitioning each edge into a set of head vertices and a set of tail vertices.  In this paper, an edge in a \emph{fully directed hypergraph} is an (ordered) list of distinct vertices.  To our knowledge, only a few results have been obtained in this model~\cite{BS1984, cameron2016extremal}. 

An \emph{$r$-graph} is an $r$-uniform hypergraph, and an \emph{$r$-digraph} is an $r$-uniform fully directed hypergraph.  Many concepts from digraphs extend naturally to the fully directed hypergraph setting.  If $F$ and $G$ are $r$-digraphs, we say that $F$ is a \emph{subgraph} of $G$, denoted $F\subseteq G$ if there is an injection $h\st V(F)\to V(G)$ such that $(u_1,\ldots,u_r)\in E(F)$ implies $(h(u_1),\ldots,h(u_r)) \in E(G)$.

For each $r$-digraph $F$, we define the \emph{Turán number}, denoted $\ex(n,F)$, to be the maximum number of edges in an $n$-vertex $r$-digraph $G$ that does not contain $F$.  Similarly, if $\F$ is a family of $r$-digraphs, then we define $\ex(n,\F)$ to be the maximum number of edges in an $n$-vertex $r$-digraph $G$ such that $F\not\subseteq G$ for each $F\in\F$.  Brown and Simonovits~\cite{BS1984} studied a generalization of this model in 1984 which allows edges to have multiplicity greater than $1$.  Among other results, they proved that the Turán number for an infinite family of $r$-digraphs is approximated by a finite subfamily.  That is, if $\F$ is a family of $r$-digraphs with uniformly bounded edge multiplicities, then for each positive $\vep$ there exists a finite $\F_0\subseteq \F$ such that $\ex(n,\F_0) - \vep n^r \le \ex(n,\F)\le \ex(n,\F_0)$ .

For $k\le r!$, a \emph{$k$-orientation} of an $r$-graph $G$ is an $r$-digraph $G'$ such that $V(G')=V(G)$ and for each $e\in E(G)$ exactly $k$ of the orderings of vertices in $e$ are edges in $G'$.  An \emph{$(r,k)$-tournament} is a $k$-orientation of a complete $r$-graph.  In our language, the usual notion of a tournament is a $(2,1)$-tournament.  With $r$ fixed and $k$ increasing from $0$ to $r!$, an $(r,k)$-tournament evolves from the empty $r$-digraph to the complete $r$-digraph.  For a fixed $r$, it is natural to ask for the minimum value of $k$ such that every $(r,k)$-tournament has some structure or property.  

In a list of objects $(a_1,\ldots,a_t)$, an \emph{interval} is a sublist of consecutive entries.  An \emph{$r$-interval} is an interval of size $r$.  A \emph{directed tight path} is an $r$-digraph whose vertices can be ordered so that the $r$-intervals are the edges in $G$.  The $s$-vertex directed tight path is denoted $\PPr_s$ and we write $\PPr_s = v_1\ldots v_s$ when $v_1,\ldots,v_s$ is the ordering of $V(\PPr_s)$ whose $r$-intervals are the edges in $\PPr_s$.  Similarly, a \emph{directed tight cycle} is an $r$-digraph $G$ whose vertices can be arranged cyclically so that the intervals of size $r$ are the edges in $G$.  For $s\ge r$, the $s$-vertex directed tight cycle is denoted $C^{(r)}_s$ and we write $C^{(r)}_s = \cycle{v_1,\ldots,v_s}$ when $v_1,\ldots,v_s$ is a cyclic arrangement of $V(C^{(r)}_s)$ whose $r$-intervals are the edges in $C^{(r)}_s$.  In this paper, a \emph{path} and a \emph{cycle} refer to tight paths and tight cycles unless otherwise stated.

Let $f(n,r,k)$ be the maximum integer $s$ such that every $n$-vertex $(r,k)$-tournament contains a copy of $\PPr_s$ as a subgraph.  Rédei's Theorem~\cite{Redei1934} asserts that every $(2,1)$-tournament has an odd number of spanning paths.  It follows that every $(2,1)$-tournament has at least one spanning path, and so $f(n,2,1)=n$.  Note that for $n\ge r-1$, we have $r-1 = f(n,r,0) \le f(n,r,1) \le \cdots \le f(n,r,r!) = n$.  We are interested in several natural thresholds.  In terms of $r$, how large does $k$ need to be before $f(n,r,k)$ tends to infinity with $n$, is linear in $n$, or equals $n$?  These are the \emph{growing}, \emph{linear}, and \emph{spanning path thresholds}, respectively.

In \Cref{sec:2-Thresholds-general-r}, we obtain bounds on the thresholds for $k$ for various growth behaviors of $f(n,r,k)$.  Our main result is to obtain the exact threshold on $k$ for $f(n,r,k) = \omega(1)$ in terms of a simpler combinatorial problem on the \emph{pattern shift graph}, and we use this to show that this threshold is between $\left(1-\frac{1}{r}-O(\frac{\log r}{r^2\log\log r})\right)r!$ and $(1-\frac{1}{r}-\frac{\varphi(r)-1}{r!})r!$, where $\varphi(r)$ is the Euler Totient Function $\varphi(r)=|\{d\in[r]\st \gcd(d,r)=1\}|$ (see \Cref{cor:growing-threshold-bounds}).  Also, if $k>(1-1/r)r!$, then every $(r,k)$-tournament has paths of linear size (see \Cref{thm:linear-thresh}), and if $k>(1-\frac{1}{4(r-1)})r!$, then every $(r,k)$-tournament has spanning paths (see \Cref{thm:hamthrsh}).  We summarize these results in \Cref{fig:sec2-summary}.

\begin{figure}
    \begin{center}
    \renewcommand{\arraystretch}{1.3} 
    \begin{tabular}{l|l|l}
    Range on $k/r!$ & Bounds on $f(n,r,k)$ & References \\
    \hline
    $0 \le k/r! \le 1-\frac{1}{r}-\frac{1}{r^{2-o(1)}}$ & $f(n,r,k)=\Theta(1)$ &  \Cref{cor:growing-threshold-bounds}\\
    $1-\frac{1}{r}-\frac{1}{r^{2-o(1)}} \le k/r! \le 1-\frac{1}{r}-\frac{\varphi(r)-2}{r!}$ & No nontrivial bounds & \\
    $1-\frac{1}{r}-\frac{\varphi(r)-1}{r!} \le k/r! \le 1-\frac{1}{r}$ & $\omega(1) \le f(n,r,k) \le n$  & \Cref{cor:growing-threshold-bounds}\\
    $1-\frac{1}{r} + \frac{1}{r!} \le k/r! \le  1-\frac{1}{4(r-1)}$ & $\Omega(n) \le f(n,r,k) \le n$ & \Cref{thm:linear-thresh}\\
    $1-\frac{1}{4(r-1)} < k/r! \le 1$ & $f(n,r,k) = n$ & \Cref{thm:hamthrsh}
    \end{tabular}
    \end{center}
    \caption{Summary of results in \Cref{sec:2-Thresholds-general-r} for $r
    \ge 3$.  In the first column, $o(1)$ represents a function that tends to $0$ as $r\to\infty$.  In the second column, the big-Oh notation treats $r$ and $k$ as constant and $n$ as the variable.}\label{fig:sec2-summary}
\end{figure}

In \Cref{sec:small-uniform}, we consider fixed small $r$.  We have that $f(n,3,k) = O(1)$ when $k\le 2$ and $f(n,3,k)=n$ when $k\ge 5$.  Finding sharp bounds on $f(n,3,k)$ is interesting when $k\in\{3,4\}$.  We show that $(\log n)^{1/2-o(1)} \le f(n,3,3) \le O(\log n)$ and that $f(n,3,4) \ge \Omega(n^{1/5})$.  We summarize these results in \Cref{fig:sec3-summary}.

\begin{figure}
    \begin{center}
    \renewcommand{\arraystretch}{1.3} 
    \begin{tabular}{l|l|l}
    $k$ & Bounds on $f(n,3,k)$ & References \\
    \hline
    $0\le k\le 2$ & $f(n,3,k)\le 3$ & \Cref{prop:trivthrsh} \\
    $k=3$ & $(\ln n)^{1/2 - o(1)} \le f(n,3,3) \le O(\ln n)$ & \Cref{thm:(33)-upper} and \Cref{thm:(33)-lower}\\
    $k=4$ & $\Omega(n^{1/5}) \le f(n,3,4) \le n$  & \Cref{thm:34-lower}\\
    $5 \le k \le 6$ & $f(n,3,k) = n$ &  \Cref{prop:35-spanning}
    \end{tabular}
    \end{center}
    \caption{Summary of results in \Cref{sec:small-uniform} for $r = 3$.  In the second column, the big-Oh notation treats $r$ and $k$ as constant and $n$ as the variable.}\label{fig:sec3-summary}
\end{figure}

In \Cref{sec:density}, we consider density thresholds for the emergence of paths in $r$-digraphs.  The \emph{density} of an $n$-vertex $r$-digraph $G$ equals $|E(G)|/n_{(r)}$, where $n_{(r)}$ is the \emph{falling factorial} given by $n_{(r)} = n(n-1)\cdots(n-r+1)$.  The \emph{density threshold} for growing paths $d^*$ is the infimum (in fact, minimum when $r\ge 2$) of the set of $d\in[0,1]$ such that every sufficiently large $n$-vertex $r$-digraph with density at least $d$ has a copy of $P_s^{(r)}$, where $s$ grows with $n$.  In \Cref{lem:density-lower}, we show that $d^*\ge 1-\frac{1}{r}$, and in \Cref{cor:density-upper}, we show that $d^*\le 1-\frac{1}{r}$.

The growing path threshold for $(r,k)$-tournaments is one more than the maximum size of an acyclic set of vertices in a digraph that we call the \emph{pattern-shift graph} (see \Cref{thm:const-path-thresh}).  The \emph{$t$-prefix} of a list is the sublist of its first $t$ elements.  Similarly, the \emph{$t$-suffix} is the sublist of its last $t$ elements.  Let $a = (a_1,\ldots,a_s)$ and $b = (b_1,\ldots,b_s)$, and suppose the entries within each list are distinct.  We say that $a$ and $b$ are \emph{order isomorphic} or \emph{pattern-match} if, for $i<j$, we have $a_i < a_j$ if and only if $b_i < b_j$.  The \emph{pattern-shift graph} of order $r$, denoted $\PSG_r$, is the digraph whose vertices are the permutations of $[r]$ with an edge from $a$ to $b$ if and only if the $(r-1)$-suffix of $a$ and the $(r-1)$-prefix of $b$ pattern-match.  When $r$ is small, we often omit tuple notation for the vertices in the pattern-shift graph, writing $123\in V(\PSG_3)$ instead of $(1,2,3)\in V(\PSG_3)$.  If the $(r-1)$-suffix of $a$ equals the $(r-1)$-prefix of $b$, then we say that $ab$ is a \emph{shift edge}.  The subgraph of $\PSG_r$ of shift edges is a disjoint union of $(r-1)!$ cycles of length $r$; we call these \emph{shift cycles}.  See~\Cref{fig:PSG} for $\PSG_3$.  The pattern-shift graph is an analogue of the well-known de Bruijn graph~\cite{deBruijn1946} (see \cite{West2001} for a textbook presentation).  Close variants of the pattern-shift graph were previously investigated by Asplund and Fox~\cite{ASPLUND2018427} and by Chung, Diaconis, and Graham~\cite{CHUNG199243}.

In this paper, we use $\ln$ for the natural logarithm and $\lg$ for the logarithm in base $2$.

\section{Thresholds for Tournaments}\label{sec:2-Thresholds-general-r}

\subsection{Constant Path Thresholds}
    
Let $G$ be an $(r,k)$-tournament.  When $k=0$, the $r$-digraph $G$ has no edges and hence has paths of size at most $r-1$ and length $0$.  When $k>0$, edges are present and so $G$ has paths of size at least $r$ and length at least $1$.  Our next proposition shows that longer paths are not forced even when $k$ is as large as $r!/3$.

\begin{prop}\label{prop:trivthrsh}
If $0<k\le\frac{1}{3}r!$, then $f(n,r,k)=r$.
\end{prop}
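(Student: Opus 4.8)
The plan is to dispatch the lower bound $f(n,r,k)\ge r$ at once and reduce the upper bound to a tiny combinatorial fact about $\PSG_r$. For the lower bound: since $k\ge 1$, in any $(r,k)$-tournament $G$ every $r$-set carries at least one edge, which is a copy of $\PPr_r$; hence $f(n,r,k)\ge r$ for all $n\ge r$. So it remains to construct, for every $n$, an $n$-vertex $(r,k)$-tournament with no copy of $\PPr_{r+1}$ (for $n=r$ this is automatic).

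The construction I would use is a \emph{pattern orientation}. Fix a set $\Pi$ of $k$ permutations of $[r]$, order the vertices $1<\cdots<n$, and for each $r$-set $S=\{s_1<\cdots<s_r\}$ take as edges on $S$ exactly the orderings $(s_{\pi(1)},\ldots,s_{\pi(r)})$ with $\pi\in\Pi$; this is a valid $k$-orientation of $\Krn$. The key observation is that the two $r$-interval edges of any copy $v_1\cdots v_{r+1}$ of $\PPr_{r+1}$ in such an orientation are $(v_1,\ldots,v_r)$ and $(v_2,\ldots,v_{r+1})$, so their order types $\pi,\sigma$ both lie in $\Pi$; and since the $(r-1)$-suffix of the first edge and the $(r-1)$-prefix of the second are literally the same list $(v_2,\ldots,v_r)$, the $(r-1)$-suffix of $\pi$ and the $(r-1)$-prefix of $\sigma$ have the same order type, i.e.\ $\pi$ and $\sigma$ span an edge of $\PSG_r$ (a loop if $\pi=\sigma$). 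Hence, if $\Pi$ induces no edge and no loop in $\PSG_r$, the corresponding pattern orientation contains no $\PPr_{r+1}$, and we get $f(n,r,k)=r$.

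It therefore suffices to exhibit a set of $k$ permutations of $[r]$ inducing no edge and no loop of $\PSG_r$. Since $0<k\le\tfrac13 r!$ forces $r\ge 3$ (and then $\tfrac13 r!$ is an integer), I would produce one of size exactly $\tfrac13 r!$ and pass to a size-$k$ subset. Take $\Pi=\{\pi\st \pi(1)>\pi(2)<\pi(3)\}$, the permutations whose length-$3$ prefix is a valley. Exactly two of the six order types of a length-$3$ list have this shape, and each arises as the prefix type of $\tfrac16 r!$ permutations of $[r]$, so $|\Pi|=\tfrac13 r!$. Now for any $\pi,\sigma\in\Pi$ (possibly equal), the leading comparison inside the $(r-1)$-suffix of $\pi$ is the ascent $\pi(2)<\pi(3)$, while the leading comparison inside the $(r-1)$-prefix of $\sigma$ is the descent $\sigma(1)>\sigma(2)$; two lists whose leading comparisons disagree are not order isomorphic, so $\Pi$ spans neither an edge nor a loop of $\PSG_r$.

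I expect the only delicate point to be the bookkeeping in the second paragraph: one has to check that order types of sublists of $(v_1,\ldots,v_{r+1})$ coincide with the corresponding sublists of the edge patterns, so that the shared middle block $(v_2,\ldots,v_r)$ genuinely forces an edge of $\PSG_r$ between $\pi$ and $\sigma$. The one creative ingredient is the valley invariant $\pi(1)>\pi(2)<\pi(3)$; it is essentially forced by wanting a property pinned down by a single adjacent comparison near the front of a permutation \emph{and} one near the back, so that these can be made to clash across $\Pi$, while keeping a $1/3$ fraction of all permutations so as to accommodate every $k\le\tfrac13 r!$.
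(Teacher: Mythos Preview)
Your proof is correct and is essentially the paper's argument: the paper also uses a pattern orientation, taking as edges those $(u_1,\ldots,u_r)$ with $u_2=\max\{u_1,u_2,u_3\}$ (a peak at position~2 rather than your valley), and derives the same contradiction $v_2>v_3$ versus $v_2<v_3$ between the two $r$-intervals of a putative $\PPr_{r+1}$. Your framing via an independent set in $\PSG_r$ is a touch more elaborate but amounts to the same one-line obstruction; the paper simply invokes the monotonicity $f(n,r,k)\le f(n,r,r!/3)$ in place of your ``pass to a size-$k$ subset of $\Pi$'' step.
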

\begin{proof}
Taking a single edge in an $(r,k)$-tournament shows that $f(n,r,k)\ge r$.  For the upper bound, note that $1\le k\le \frac{1}{3}r!$ implies that $r\ge 3$.  Since an $(r,k)$-tournament contains an $(r,k')$-tournament when $k'<k$, it suffices to show the upper bound in the case that $k=r!/3$.  Let $k=r!/3$.  We construct an $(r,k)$-tournament avoiding $\PPr_{r+1}$.  We use $[n]$ for our vertex set and we let $(u_1,\ldots, u_r)$ be an edge if and only if $u_2=\max\{u_1,u_2,u_3\}$.  Note that $v_1\ldots v_{r+1}$ does not form a copy of $\PPr_{r+1}$ since the first $r$-tuple requires $v_2>v_3$ and the second $r$-tuple requires $v_2<v_3$.
\end{proof}

Let $t$ be a small, fixed non-negative integer and let $r$ be a positive integer.  The \emph{threshold for paths of length $t$} is the minimum $k$ such that for sufficiently large $n$, we have that $f(n,r,k)\ge r-1+t$.  Note that when $k$ reaches the threshold value for paths of length $t$, every sufficiently large $(r,k)$-tournament contains a path of length $t$, but when $k$ is less than this threshold value there are arbitrarily large $(r,k)$-tournaments that avoid paths of length $t$.  The threshold for paths of length $0$ occurs at $k=0$, and the threshold for paths of length $1$ occurs at $k=1$.  \Cref{prop:trivthrsh} shows that the threshold for paths of length $2$ is more than $r!/3$.

Our next aim is to find the threshold for paths of length $t$ exactly in terms of an optimization problem in a particular digraph.  

\newcommand{\drawPSG}%
{
	\begin{tikzpicture}[]
			
		\begin{scope}[every node/.style={circle,draw,black,inner sep=1pt,minimum size=5ex},line width=0.7pt,decoration={markings,mark=at position 0.5 with {\arrow{Stealth}}}]
            \foreach \n/\lab in {0/321,1/132,2/312,3/123,4/231,5/213}
            {{
                \pgfmathsetmacro{\ang}{\n*60}
                \node at (\ang:2.5cm) (U\lab) {$\lab$} ;
            }}

            \draw[loop left,-Stealth]  (U123) to (U123) ;
            \draw[loop right,-Stealth] (U321) to (U321) ;

            \begin{scope}[every path/.style={bend left=15}]
            \foreach \u/\v in {%
                    U231/U213, U213/U132, U132/U312, U312/U231,
                    U231/U312, U312/U132, U132/U213, U213/U231%
                }
            {{
                \draw[postaction={decorate}] (\u) to (\v) ;
            }}
            \end{scope}

            \foreach \u/\v in {%
                U312/U123, U123/U132, U213/U123, U123/U231,
                U132/U321, U321/U312, U231/U321, U321/U213%
            }
            {{
                \draw[postaction={decorate}] (\u) to (\v) ;  
            }}
        \end{scope}
	\end{tikzpicture}
}
\begin{figure}
    \begin{center}
    \drawPSG
    \end{center}
    \caption{$\PSG_3$}\label{fig:PSG}
\end{figure}

Note that the $\PSG_r$ is regular with in-degree and out-degree both $r$, and the loops in $\PSG_r$ are at the identity permutation and its reverse.  Note that for each list $(a_1,\ldots,a_r)$ with distinct integer entries, there is exactly one permutation of $[r]$ that matches patterns with $(a_1,\ldots,a_r)$.  We call this permutation of $[r]$ the \emph{canonical pattern} of $(a_1,\ldots,a_r)$ and denote it by $\canptn(a_1,\ldots,a_r)$.

Let $t\ge 1$ and let $G$ be a digraph.  Let $\thresh{t}{G}$ be the maximum size of a set of vertices in $G$ inducing a subgraph with no walk of size $t$.  Note that when $|V(G)|=n$, we have $0 = \thresh{1}{G} \le \thresh{2}{G} \le \cdots \le \thresh{n+1}{G} = \acyc(G)$, where $\acyc(G)$ is the maximum size of a set of vertices in $G$ inducing an acyclic subgraph.  Note that $\thresh{2}{G} = \alpha(G)$, where $\alpha(G)$ is the maximum size of a set of vertices containing no edges or loops.  Note that when $G=\PSG_3$, we have $\thresh{1}{G} = 0$ and $\thresh{t}{G} = 2$ for $t\ge 2$ (achieved by, for example, $\{132, 231\}$).

\begin{thm}\label{thm:const-path-thresh}
    Let $r\ge 1$ and $t\ge 1$.  The threshold for paths of length $t$ is $1+\thresh{t}{\PSG_r}$.  In other words, for $n$ large enough, we have $f(n,r,k) < r-1+t$ if $k=\thresh{t}{\PSG_r}$ and $f(n,r,k) \ge r-1+t$ if $k = 1 + \thresh{t}{\PSG_r}$.
\end{thm}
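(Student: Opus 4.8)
The plan is to route everything through the combinatorics of walks in $\PSG_r$ by way of \emph{pattern tournaments}. Given a set $W$ of permutations of $[r]$ and a linearly ordered vertex set $V$, let $T_W(V)$ denote the $r$-digraph on $V$ in which $(w_1,\ldots,w_r)$ is an edge precisely when $\canptn(w_1,\ldots,w_r)\in W$. Since every $r$-subset of $V$ has exactly $|W|$ of its orderings as edges of $T_W(V)$, this is an $(r,|W|)$-tournament. The heart of the argument is the following equivalence, which I would isolate as a lemma: if $|V|\ge r-1+t$, then $T_W(V)$ contains $\PPr_{r-1+t}$ if and only if $\PSG_r[W]$ contains a walk of size $t$.

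One direction is a short observation. If $w_1\cdots w_{r-1+t}$ is a copy of $\PPr_{r-1+t}$ in $T_W(V)$, then each $r$-interval $(w_i,\ldots,w_{i+r-1})$ with $1\le i\le t$ is an edge, so $\sigma_i:=\canptn(w_i,\ldots,w_{i+r-1})\in W$; moreover the $(r-1)$-suffix of $(w_i,\ldots,w_{i+r-1})$ and the $(r-1)$-prefix of $(w_{i+1},\ldots,w_{i+r})$ are the same list, so they pattern-match and $\sigma_i\to\sigma_{i+1}$ is an edge of $\PSG_r$. Hence $\sigma_1,\ldots,\sigma_t$ is a walk of size $t$ in $\PSG_r[W]$. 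For the converse I would build, by induction on $t$, a sequence of distinct reals realizing a given walk $\sigma_1,\ldots,\sigma_t$ as its sequence of window patterns: having realized $\sigma_1,\ldots,\sigma_{t-1}$ by reals $x_1,\ldots,x_{r-2+t}$, the edge $\sigma_{t-1}\to\sigma_t$ ensures that the current $(r-1)$-suffix $(x_t,\ldots,x_{r-2+t})$ already pattern-matches the $(r-1)$-prefix of $\sigma_t$, so it suffices to append a new real at the rank among $x_t,\ldots,x_{r-2+t}$ prescribed by $\sigma_t$, which is possible since any point of the appropriate open interval of $\RR$ works and only finitely many values must be avoided. Because $|V|\ge r-1+t$, the resulting real sequence embeds order-isomorphically into $V$, giving the required copy of $\PPr_{r-1+t}$.

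Granting the lemma, the theorem follows quickly. For the first assertion set $k=\thresh{t}{\PSG_r}$ and let $W$ be a vertex set of $\PSG_r$ of size $k$ inducing a subgraph with no walk of size $t$. For every $n$ the $(r,k)$-tournament $T_W([n])$ contains no copy of $\PPr_{r-1+t}$ (trivially if $n<r-1+t$, and by the lemma otherwise, since $\PSG_r[W]$ has no walk of size $t$), so $f(n,r,k)<r-1+t$. For the second assertion set $k=1+\thresh{t}{\PSG_r}$ and let $G$ be an $(r,k)$-tournament whose vertices are linearly ordered. For each $r$-set $S$ of vertices, let $W(S)$ be the set of patterns $\canptn$ realized by the edges of $G$ among the orderings of $S$, so $|W(S)|=k$. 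Colour each $r$-subset of $V(G)$ by its value $W(S)$; there are at most $\binom{r!}{k}$ colours, so by the hypergraph Ramsey theorem, once $n$ is large enough there is a set $U$ with $|U|\ge r-1+t$ on which $W(S)$ is constant, equal to some $W$ with $|W|=k>\thresh{t}{\PSG_r}$. Then $G[U]=T_W(U)$, and since $|W|$ exceeds $\thresh{t}{\PSG_r}$ the subgraph $\PSG_r[W]$ has a walk of size $t$; by the lemma $T_W(U)$, and hence $G$, contains $\PPr_{r-1+t}$. Thus $f(n,r,k)\ge r-1+t$ for all large $n$, and combined with monotonicity of $f$ in $k$ this pins the threshold at $1+\thresh{t}{\PSG_r}$.

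I expect the converse half of the lemma --- converting an abstract walk in $\PSG_r$ into a genuine tight path --- to be the step requiring care: one must check that the overlap structure of consecutive $r$-windows forces exactly the right rank constraints and that these can always be met inside $\RR$, while keeping the appended values distinct from all earlier ones. The remaining ingredients, namely that pattern tournaments are precisely the $(r,k)$-tournaments that are homogeneous with respect to some vertex order and the Ramsey reduction to that homogeneous case, are routine.
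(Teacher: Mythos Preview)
Your proposal is correct and follows essentially the same route as the paper: the pattern-tournament construction $T_W(V)$ for the upper bound, the correspondence between tight paths and walks in $\PSG_r[W]$ (with the inductive real-sequence realization for the converse direction), and the Ramsey reduction to the homogeneous case for the lower bound all match the paper's argument. The only difference is organizational --- you explicitly isolate the equivalence as a lemma, whereas the paper weaves both directions into the body of the proof --- but the ideas and their execution are the same.
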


\begin{proof}
    First, we show that $f(n,r,k) < r-1+t$ when $k=\thresh{t}{\PSG_r}$.  Let $S$ be a set of $\thresh{t}{\PSG_r}$ vertices in $\PSG_r$ that induces a subgraph containing no walk of size $t$.  For large $n$, we construct an $(r,k)$-tournament $G$ on $[n]$ by putting $(u_1,\ldots,u_r)\in E(G)$ if and only if $\canptn(u_1,\ldots,u_r) \in S$.  If $v_1\ldots v_m$ is a walk in $G$, then each $r$-interval $(v_{j+1},\ldots,v_{j+r})$ satisfies $\canptn(v_{j+1},\ldots,v_{j+r}) \in S$.  Moreover, it is clear that the last $r-1$ entries of $\canptn(v_j,\ldots,v_{j+r-1})$ and the first $r-1$ entries of $\canptn(v_{j+1},\ldots,v_{j+r})$ both pattern-match $(v_{j+1},\ldots,v_{j+r-1})$.  
    
    Let $w_j = \canptn(v_j,\ldots,v_{j+r-1})$ and let $m'=m-r+1$.  Note that $w_1\ldots w_{m'}$ is a walk in the subgraph of $\PSG_r$ induced by $S$ of size $m'$.  It follows that $m' < t$ and so $m < r-1+t$, implying the desired upper bound on $f(n,r,k)$.

    Next, we show that for sufficiently large $n$, we have $f(n,r,k) \ge r-1+t$ when $k=1+\thresh{t}{\PSG_r}$.  Let $G$ be an $(r,k)$-tournament on $[n]$.  For each $r$-set $A$, let $f(A) = \{\canptn(e)\st \mbox{$e\in E(G)$ and $e$ is an ordering of $A$}\}$ and note that since $G$ is an $(r,k)$-tournament, for each $r$-set $A$, we have that $f(A)$ is a set of $k$ vertices in $\PSG_r$.  Hence $f\st \binom{[n]}{r} \to \binom{V(\PSG_r)}{k}$ is a $\binom{r!}{k}$-edge-coloring of the complete $r$-graph on $[n]$.  Let $m=r-1+t$.  By Ramsey's Theorem~\cite{ramsey1930problem}, since $n$ is sufficiently large, we obtain an $m$-vertex subgraph $G_0$ which is monochromatic in some color $S\in \binom{V(\PSG_r)}{k}$.  Since $|S| = k > \thresh{t}{\PSG_r}$, it follows that there is a walk $w_1\ldots w_t$ in the subgraph of $\PSG_r$ induced by $S$.  By induction on $t$, we obtain a list of distinct real numbers $x_1,\ldots,x_m$ with $m=r-1+t$ such that $\canptn(x_j,\ldots,x_{j+r-1}) = w_j$ for $1\le j\le t$.  For $t=1$, we may simply take $(x_1,\ldots,x_r)=w_1$.  For $t\ge 2$, we obtain $x_1,\ldots,x_{m-1}$ inductively and then select $x_m$ distinct from $x_1,\ldots,x_{m-1}$ so that $\canptn(x_t,\ldots,x_m) = w_t$.  This is possible since $\canptn(x_{t-1},\ldots,x_{m-1}) = w_{t-1}$ and $w_{t-1}w_t\in E(\PSG_r)$, implying that $(x_t,\ldots,x_{m-1})$ pattern-matches the first $r-1$ entries of $w_t$.  

    We claim that for the ordering $v_1\ldots v_m$ of $V(G_0)$ which pattern-matches $x_1,\ldots,x_m$, we have that $v_1\ldots v_m$ is a path.  Indeed, for $1\le j\le t$ we have that $(v_j, \ldots, v_{j+r-1})$ pattern-matches $(x_j,\ldots,x_{j+r-1})$, and $\canptn{(x_j,\ldots,x_{j+r-1})} = w_j \in S$.  Since $f(A)=S$ for each $r$-set of vertices in $G_0$, including $A=\{v_j,\ldots,v_{j+r-1}\}$, it follows that $(v_j,\ldots,v_{j+r-1})\in E(G_0)$.  Hence $G$ contains a path of size $m$ and so $f(n,r,k)\ge m = r-1+t$ when $n$ is sufficiently large.
\end{proof}

In general, computing $\thresh{t}{\PSG_r}$ seems challenging.  It is convenient to define the complementary \emph{transversal number}, denoted $\trans{t}{G}$, to be the minimum size of a set of vertices in $G$ which intersects every walk in $G$ of size $t$.  Since the complement of a set of vertices avoiding all walks of size $t$ intersects every such walk, always $\thresh{t}{G} + \trans{t}{G} = |V(G)|$.  Similarly, let $\trans{}{G}$ be the minimum size of a set of vertices in $G$ which intersects every cycle in $G$, and note that $\trans{}{G} + \acyc(G) = |V(G)|$.

Our next proposition is simple but very useful.
\begin{prop}\label{prop:max-elt-travel}
    Let $uv$ be an edge in $\PSG_r$ with $u=(u_1,\ldots,u_r)$ and $v=(v_1,\ldots,v_r)$.  Let $i$ be the index such that $u_i = r$ and let $j$ be the index such that $v_j = r$.  Either $j=i-1$, $j=r$, or $i=1$.  Consequently every cycle in $\PSG_r$ contains a vertex $(u_1,\ldots,u_r)$ such that $u_1=r$ or $u_r=r$.
\end{prop}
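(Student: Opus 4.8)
The plan is to prove the two sentences of the statement in turn. For the first, I would simply unwind the definition of an edge in $\PSG_r$: $uv\in E(\PSG_r)$ means that the $(r-1)$-suffix $(u_2,\ldots,u_r)$ of $u$ pattern-matches the $(r-1)$-prefix $(v_1,\ldots,v_{r-1})$ of $v$. Assume $i\ne 1$ and $j\ne r$; I must deduce $j=i-1$. Since $i\ge 2$, the largest entry $r$ of $u$ occurs inside the suffix $(u_2,\ldots,u_r)$, namely in position $i-1$ of that length-$(r-1)$ list, and it is of course the maximum entry of that list. Pattern-matching preserves the relative order of every pair of positions, and in particular the location of the maximum; hence the maximum entry of $(v_1,\ldots,v_{r-1})$ also sits in position $i-1$. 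But since $j\le r-1$, the global maximum $r$ of $v$ lies in the prefix $(v_1,\ldots,v_{r-1})$, so it is exactly this maximum, which is in position $i-1$; thus $j=i-1$. This is the whole of the first claim, and I do not anticipate any difficulty here — it is the same position-tracking device used for the element $1$ in the proofs of \Cref{prop:trans-ub} and \Cref{prop:trans-lb}, applied to $r$ instead of $1$.

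For the second claim, let $w_1 w_2\cdots w_\ell w_1$ be any cycle in $\PSG_r$ and let $z_m\in[r]$ denote the position of the entry $r$ in $w_m$. Suppose toward a contradiction that $1<z_m<r$ for every $m$. Applying the first claim to the edge $w_m w_{m+1}$ (indices read modulo $\ell$), the options $j=r$ and $i=1$ are excluded by $z_{m+1}\ne r$ and $z_m\ne 1$, so the only remaining possibility is $z_{m+1}=z_m-1$. Going around the cycle this gives $z_1>z_2>\cdots>z_\ell>z_1$, which is impossible. Hence some $w_m$ has $z_m\in\{1,r\}$, i.e.\ its first or last coordinate equals $r$, as desired. (The same argument applied to a loop $ww$ forces the position of $r$ in $w$ to be $1$ or $r$, which matches the fact that the loops of $\PSG_r$ are at the identity and its reverse, so loops need no separate treatment.)

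The argument is short, and the only point requiring a moment's care is that the conclusion $z_{m+1}=z_m-1$ must hold simultaneously along the entire closed walk; once that is in place the contradiction is immediate, regardless of whether one reads ``cycle'' as permitting repeated vertices. I expect no genuine obstacle: the proposition is essentially a convenient restatement, in terms of the maximum element, of the structural property of $\PSG_r$ already exploited for the minimum element earlier in this section.
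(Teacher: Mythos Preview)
Your proposal is correct and follows essentially the same approach as the paper: assume $i>1$ and $j<r$, use that pattern-matching preserves the position of the maximum in the suffix/prefix to conclude $j=i-1$, and then observe that along a cycle the position of $r$ cannot strictly decrease indefinitely. The paper's own proof is terser but identical in substance.
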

\begin{proof}
    Suppose that $i>1$ and $j<r$.  Since $r = u_i = \max (u_2,\ldots,u_r)$ and $uv\in E(\PSG_r)$, it follows that $v_{i-1} = \max (v_1,\ldots,v_{r-1})$.  Since $v_r \ne r$, we conclude that $v_{i-1} = r$ and so $j=i-1$.  Since the index of the position containing $r$ cannot decrease indefinitely along edges in a cycle, the rest of the proposition follows.
\end{proof}

Our next aim is to find $\thresh{t}{\PSG_r}$ exactly when $t$ divides $r-1$.  

\begin{prop}\label{prop:trans-ub}
    $\trans{t}{\PSG_r} \le \left(\frac{1}{r} + \ceil{\frac{r-1}{t}}\cdot \frac{1}{r}\right)r!$
\end{prop}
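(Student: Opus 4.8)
The plan is to exhibit an explicit set $T$ of permutations of $[r]$ of the claimed size that meets every walk of size $t$ in $\PSG_r$; since $\trans{t}{\PSG_r}$ is by definition the minimum size of such a set, this gives the bound. Recall from the proof of \Cref{thm:const-path-thresh} that every walk $w_1\cdots w_t$ in $\PSG_r$ arises from a list $x_1,\ldots,x_{r-1+t}$ of distinct reals with $w_j=\canptn(x_j,\ldots,x_{j+r-1})$ for $1\le j\le t$. I will fix a set $P\subseteq[r]$ of positions and let $T$ be the set of permutations of $[r]$ in which the maximum entry $r$ occupies a position belonging to $P$, so that $|T|=|P|\,(r-1)!$. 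The point is that $w_j\in T$ holds precisely when the maximum of the $j$th window $(x_j,\ldots,x_{j+r-1})$ sits at a position in $P$. Hence it suffices to choose $P$ with $|P|=1+\ceil{(r-1)/t}$ such that, for every list of $r-1+t$ distinct reals, the global maximum lies in some window of size $r$ at a within-window position in $P$; then $|T|=\bigl(\tfrac1r+\ceil{\tfrac{r-1}{t}}\cdot\tfrac1r\bigr)r!$ as desired.

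Fix such a list with global maximum at position $p\in[1,r-1+t]$. As the window of size $r$ slides over position $p$, the within-window position of the maximum ranges over exactly the integer interval $I_p:=[\max(1,p-t+1),\ \min(p,r)]$, so I need $P$ to meet $I_p$ for every $p$. The boundary values $p=1$ and $p=r-1+t$ give $I_p=\{1\}$ and $I_p=\{r\}$, forcing $1\in P$ and $r\in P$; once these are in $P$, the case $p\le t$ is settled (then $1\in I_p$) and so is the case $p>r$ (then $r\in I_p$, using $p\le r-1+t$). The only remaining case is $t<p<r$, where $I_p=[p-t+1,p]$ is a block of exactly $t$ consecutive integers inside $[2,r-1]$. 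I will take
\[
  P=\{1\}\cup\bigl(\{r-it\st i\in\ZZ_{\ge0}\}\cap[1,r]\bigr),
\]
and a routine count (splitting on whether $t\mid r-1$, so that $1$ is or is not already in the arithmetic progression) gives $|P|=1+\ceil{(r-1)/t}$.

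The one step requiring care is verifying that this $P$ meets each block $I_p=[p-t+1,p]$ for $t<p<r$. Since $I_p$ is a set of $t$ consecutive integers, it contains a unique integer $v$ with $v\equiv r\pmod t$; then $v\le p<r$, so $v=r-it$ for some $i\ge1$, while $v\ge p-t+1\ge2$, so $v\in P$. Therefore every walk of size $t$ in $\PSG_r$ meets $T$, and $\trans{t}{\PSG_r}\le|T|=\bigl(\tfrac1r+\ceil{\tfrac{r-1}{t}}\cdot\tfrac1r\bigr)r!$. (The degenerate cases $r=1$ and $r=2$ can be checked directly and agree with this bound.)
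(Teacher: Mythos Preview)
Your proof is correct and is essentially the paper's argument: both build the transversal as the set of permutations whose extremal entry lies in a fixed set of $1+\ceil{(r-1)/t}$ positions (the paper tracks the position of $1$ and verifies the transversal property by analyzing how that position moves along a single edge of $\PSG_r$, while you track the position of $r$ and verify it via the list representation from \Cref{thm:const-path-thresh}; these are related by the reversal/complement symmetry of $\PSG_r$). One tiny slip: your case split ``$p\le t$'' and ``$p>r$'' misses $p=r$ when $t<r$, but this is harmless since then $r\in I_p\cap P$.
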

\begin{proof}
    Let $S\subseteq V(\PSG_r)$ be the set of all vertices $(u_0,\ldots,u_{r-1})$ with the property that the index $i$ satisfying $u_i = r$ satisfies either $i=r-1$ or $i\equiv 0\pmod{t}$.  Note that besides the last index $r-1$, there are $\ceil{\frac{r-1}{t}}$ indices $i$ with $0\le i \le r-2$ such that $i\equiv 0\pmod{t}$.  Including the last index, $S$ collects all permutations of $[r]$ which have the element $r$ in one of $1+\ceil{\frac{r-1}{t}}$ positions.  It follows that $|S|=\left(1+\ceil{\frac{r-1}{t}}\right)(r-1)!$.
    
    Suppose that $uv\in E(\PSG_r)$ with $u=(u_0,\ldots,u_{r-1})$ and $v=(v_0,\ldots,v_{r-1})$.  Let $i$ and $j$ be the indices satisfying $u_i=r$ and $v_j=r$.  By \Cref{prop:max-elt-travel}, we have that either $i=0$, $j=r-1$, or $j=i-1$.  Let $u_1\ldots u_k$ be a walk in $\PSG_r$ which is disjoint from $S$, and let $z_i$ index the position of $r$ in $u_i$.  Since $u_1\ldots u_k$ is disjoint from $S$, we have that $0 < z_i < r-1$, and it follows that $z_i$ is strictly decreasing.  Since no $z_i$ is congruent to $0$ modulo $t$, it follows that $k\le t-1$.  Hence $S$ intersects every walk in $\PSG_r$ of size $t$.
\end{proof}

Given a tuple $(a_1,\ldots,a_r)$, the \emph{forward shift operation} produces the tuple $(a_2,\ldots,a_r,a_1)$, and the \emph{backward shift operation} produces $(a_r,a_1,\ldots,a_{r-1})$.  A tuple is a \emph{cyclic shift} of another if it can be obtained by a sequence of zero or more shift operations.  Note that partitioning $V(\PSG_r)$ into the equivalence classes of the cyclic shift relation gives the family $\F$ of shift cycles, where $|\F| = r!/r$ and each $C\in\F$ has size $r$.  Since every cycle transversal includes at least one vertex from each of these cycles, $\trans{}{\PSG_r} \ge |\F| = r!(1/r)$.  

\begin{prop}\label{prop:trans-lb}
    If $t\le r$, then $\trans{t}{\PSG_r} \ge \frac{1}{r}\ceil{\frac{r}{t}} \cdot r!$.
\end{prop}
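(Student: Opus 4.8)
The plan is to exhibit inside $\PSG_r$ a partition of the vertex set into $(r-1)!$ pairwise vertex-disjoint directed cycles, each of length exactly $r$, so that any set meeting every walk of size $t$ must contain at least $\ceil{r/t}$ vertices of each of these cycles; summing over the $(r-1)!$ cycles then yields $\trans{t}{\PSG_r}\ge(r-1)!\cdot\ceil{r/t}=\frac{1}{r}\ceil{\frac{r}{t}}\cdot r!$. The cycles come from the cyclic-shift structure of $\PSG_r$. For a permutation $\pi=(\pi_1,\ldots,\pi_r)$ of $[r]$ and an integer $i$, let $\pi^{(i)}$ denote the list $(\pi_{i+1},\pi_{i+2},\ldots,\pi_{i+r})$ with all subscripts read modulo $r$, so that $\pi^{(i)}$ depends only on $i\bmod r$. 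First I would check that $\pi^{(i)}\pi^{(i+1)}\in E(\PSG_r)$ for every $i$: the $(r-1)$-suffix of $\pi^{(i)}$ and the $(r-1)$-prefix of $\pi^{(i+1)}$ are the same list $(\pi_{i+2},\ldots,\pi_{i+r})$, and hence in particular pattern-match. Thus $\pi^{(0)},\pi^{(1)},\ldots,\pi^{(r-1)},\pi^{(0)}$ is a closed walk in $\PSG_r$, and these $r$ vertices are pairwise distinct: if $\pi^{(d)}=\pi$ for some $0<d<r$, then $\pi$ would be constant on each coset of the subgroup of $\ZZ/r\ZZ$ generated by $\gcd(d,r)$, and every such coset has size $r/\gcd(d,r)\ge 2$, contradicting that the entries of $\pi$ are distinct. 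Consequently, cyclic shifting defines an equivalence relation on $V(\PSG_r)$ all of whose classes have size exactly $r$; there are $(r-1)!$ of them, and each class $K$, with its labeling $\pi^{(0)},\ldots,\pi^{(r-1)}$, spans the directed cycle $\pi^{(0)}\to\pi^{(1)}\to\cdots\to\pi^{(r-1)}\to\pi^{(0)}$ in $\PSG_r$.

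Next, fix a set $T\subseteq V(\PSG_r)$ meeting every walk of size $t$, and fix one class $K$ with its labeling $\pi^{(0)},\ldots,\pi^{(r-1)}$. For each $j$, the list $\pi^{(j)},\pi^{(j+1)},\ldots,\pi^{(j+t-1)}$ (subscripts modulo $r$) is a walk of size $t$ in $\PSG_r$, so $T$ meets it; hence $T\cap K$, viewed via $\pi^{(j)}\mapsto j$ as a subset of the cyclic group of order $r$, meets every set of $t$ cyclically consecutive elements. In particular $T\cap K\ne\emptyset$. If some gap between cyclically consecutive elements of $T\cap K$ exceeded $t$, then, since $t\le r$, the $t$ cyclically consecutive elements starting just after that gap's first endpoint would avoid $T\cap K$, a contradiction; hence every gap is at most $t$. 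As these gaps sum to $r$ and number $|T\cap K|$ in total, we get $|T\cap K|\ge r/t$, and since $|T\cap K|$ is an integer, $|T\cap K|\ge\ceil{r/t}$. Finally, the $(r-1)!$ classes partition $V(\PSG_r)$, so $|T|=\sum_K|T\cap K|\ge(r-1)!\cdot\ceil{r/t}=\frac{1}{r}\ceil{\frac{r}{t}}\cdot r!$, which is the claimed bound.

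The substance of the argument is the first observation: the shift dynamics underlying $\PSG_r$ decompose its vertex set into vertex-disjoint directed $r$-cycles, one for each cyclic equivalence class of permutations of $[r]$. Once that is in place, the remainder is the elementary fact that covering every window of length $t$ around a cycle of length $r$ requires $\ceil{r/t}$ vertices. The only step that deserves a moment's care is verifying that all of these cyclic classes have size exactly $r$ --- equivalently, that no permutation of $[r]$ has a nontrivial cyclic period --- and this is exactly the point where distinctness of the entries is used.
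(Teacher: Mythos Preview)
Your proof is correct and follows essentially the same approach as the paper: both use the partition of $V(\PSG_r)$ into the $(r-1)!$ cyclic-shift cycles of length $r$, and then argue that any set meeting every size-$t$ walk must hit each such cycle in at least $\ceil{r/t}$ vertices. Your gap argument and the paper's path-component count (removing $k$ vertices from an $r$-cycle leaves at most $k$ components of size at most $t-1$, so $r-k\le k(t-1)$) are just two phrasings of the same elementary fact, and you supply the extra verification that no permutation of $[r]$ has a nontrivial cyclic period, which the paper takes for granted.
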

\begin{proof}
    Let $\F$ be the family of shift cycles in $\PSG_r$.  Let $S$ be a set of vertices that intersects each walk in $\PSG_r$ of size $t$, and let $C\in\F$.  If $k=|S\cap V(C)|$, then we have $r-k\le k(t-1)$ since removing the $k$ vertices in $S\cap V(C)$ from $C$ leaves at most $k$ path components, each with at most $t-1$ vertices.  It follows that each $C\in\F$ contains at least $\ceil{r/t}$ vertices in $S$, and since $\F$ is a disjoint family with $(r-1)!$ cycles, we have $|S| \ge \ceil{\frac{r}{t}} (r-1)!$.
\end{proof}

\begin{thm}\label{thm:PSG-t-const}
    If $t$ divides $r-1$, then $\trans{t}{\PSG_r} = \left(\frac{1}{r} + \frac{1}{t} - \frac{1}{tr}\right)r!$ and $\thresh{t}{\PSG_r} = \left(1 - \frac{1}{r} - \frac{1}{t} + \frac{1}{tr}\right)r!$.
\end{thm}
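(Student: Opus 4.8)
The plan is to prove the formula for $\trans{t}{\PSG_r}$ by \emph{sandwiching} it between the bounds already established in \Cref{prop:trans-ub,prop:trans-lb}, and then to read off $\thresh{t}{\PSG_r}$ from the complementary identity $\thresh{t}{G}+\trans{t}{G}=|V(G)|$ applied with $G=\PSG_r$ and $|V(\PSG_r)|=r!$. The only ingredient beyond those two propositions is the elementary observation that the divisibility hypothesis $t\mid r-1$ collapses the two (a priori different) ceiling expressions appearing there to values that make the upper and lower bounds coincide.

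Concretely, I would first dispose of the trivial case $r=1$, where $\PSG_1$ is a single vertex carrying a loop, so that $\trans{t}{\PSG_1}=1$ and $\thresh{t}{\PSG_1}=0$, in agreement with the claimed formulas. So assume $r\ge 2$. Since $t$ is a positive divisor of $r-1$ we have $1\le t\le r-1<r$, so both \Cref{prop:trans-ub} and \Cref{prop:trans-lb} are available. From $t\mid r-1$ we get $\ceil{\tfrac{r-1}{t}}=\tfrac{r-1}{t}$, and since $r\equiv 1\pmod t$ (which also covers the case $t=1$) we get $\ceil{\tfrac{r}{t}}=\tfrac{r-1}{t}+1$. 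Substituting the first identity into \Cref{prop:trans-ub} gives
\[
\trans{t}{\PSG_r}\ \le\ \left(\frac{1}{r}+\frac{r-1}{t}\cdot\frac{1}{r}\right)r!\ =\ \left(\frac{1}{r}+\frac{1}{t}-\frac{1}{tr}\right)r!,
\]
and substituting the second into \Cref{prop:trans-lb} gives
\[
\trans{t}{\PSG_r}\ \ge\ \frac{1}{r}\left(\frac{r-1}{t}+1\right)r!\ =\ \left(\frac{1}{r}+\frac{1}{t}-\frac{1}{tr}\right)r!.
\]
The two bounds agree, so $\trans{t}{\PSG_r}=\left(\frac{1}{r}+\frac{1}{t}-\frac{1}{tr}\right)r!$, and hence
\[
\thresh{t}{\PSG_r}\ =\ r!-\trans{t}{\PSG_r}\ =\ \left(1-\frac{1}{r}-\frac{1}{t}+\frac{1}{tr}\right)r!,
\]
as desired.

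There is no genuine obstacle here — the substance lives in \Cref{prop:trans-ub,prop:trans-lb}, and this argument is mostly bookkeeping. The one point that deserves a sentence of care is \emph{why} the mismatch between the two propositions disappears: the upper bound carries an extra additive $\tfrac1r$ and uses $\ceil{(r-1)/t}$, while the lower bound uses $\ceil{r/t}$, and these reconcile precisely because $\tfrac1r+\tfrac{r-1}{tr}=\tfrac1r\cdot\tfrac{r-1+t}{t}$ whenever $t\mid r-1$. As sanity checks: $t=1$ yields $\trans{1}{\PSG_r}=r!$ and $\thresh{1}{\PSG_r}=0$, consistent with the general fact that $\thresh{1}{G}=0$, and $t=r-1$ yields $\thresh{r-1}{\PSG_r}=(r-2)(r-1)!$, a clean integer as it must be.
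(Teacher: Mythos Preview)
Your proof is correct and follows essentially the same approach as the paper: sandwich $\trans{t}{\PSG_r}$ between the bounds of \Cref{prop:trans-ub} and \Cref{prop:trans-lb}, observe that the divisibility hypothesis $t\mid r-1$ makes $\ceil{(r-1)/t}$ and $\ceil{r/t}$ collapse so the bounds coincide, and then read off $\thresh{t}{\PSG_r}$ from the complementary identity. Your write-up is somewhat more detailed (handling $r=1$ separately, spelling out why the hypotheses of the two propositions are met, and including sanity checks), but the substance is the same.
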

\begin{proof}
    Suppose that $r-1=tk$ for some integer $k$.  By \Cref{prop:trans-ub}, we have $\trans{t}{\PSG_r} \le \left(\frac{1}{r} + \ceil{\frac{r-1}{t}}\cdot \frac{1}{r}\right)r! = \frac{1+k}{r} r!$.  By \Cref{prop:trans-lb}, we have $\trans{t}{\PSG_r} \ge \frac{1}{r}\ceil{\frac{r}{t}} \cdot r! = \frac{1+k}{r} r!$.  It follows that $\trans{t}{\PSG_r}= \frac{1+k}{r} r! = \left(\frac{1}{r} + \frac{1}{t} - \frac{1}{tr}\right)r!$.  The result on $\thresh{t}{\PSG_r}$ follows from $\trans{t}{\PSG_r} + \thresh{t}{\PSG_r} = |V(\PSG_r)| = r!$.
\end{proof}

\subsection{Growing Path Threshold}

Let $r$ be a positive integer.  The \emph{threshold for growing paths} is the minimum $k$ such that we have that $f(n,r,k) \to \infty$ as $n\to \infty$.  

\begin{thm}\label{thm:growing-path-tourn-threshold}
Let $r$ be a positive integer.  The threshold for growing paths is $1+\acyc(\PSG_r)$.  That is, we have 
\[ f(n,r,k) = 
\begin{cases}
    O(1) & \mbox{if $k\le \acyc(\PSG_r)$} \\
    \omega(1) & \mbox{if $k > \acyc(\PSG_r)$}.
\end{cases}
\] 
\end{thm}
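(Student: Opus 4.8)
The plan is to derive this directly from \Cref{thm:const-path-thresh}, using the elementary observation that the nondecreasing sequence $\thresh{t}{\PSG_r}$ stabilizes at $\acyc(\PSG_r)$. Concretely I would record two facts about an arbitrary digraph $G$: first, $\thresh{t}{G}\le\acyc(G)$ for every $t$ (a set of vertices with no walk of size $t$ in particular contains no cycle); second, if $\acyc(G)\ge k$ then $\thresh{k+1}{G}\ge k$, since a subset of $k$ vertices of a maximum acyclic induced subgraph still induces an acyclic subgraph, and an acyclic digraph on $k$ vertices has no walk on $k+1$ vertices (a repeated vertex would close up a cycle). Combined with the fact, already used in the intro, that $f(n,r,\cdot)$ is nondecreasing in its last argument, these two observations bracket $k$ against the path-length thresholds of \Cref{thm:const-path-thresh}.

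For the upper bound, suppose $k\le\acyc(\PSG_r)$. I would fix a set $S$ of exactly $k$ vertices of $\PSG_r$ inducing an acyclic subgraph, and reuse verbatim the construction from the proof of \Cref{thm:const-path-thresh}: on vertex set $[n]$ put $(u_1,\ldots,u_r)\in E(G)$ exactly when $\canptn(u_1,\ldots,u_r)\in S$, which is an $(r,k)$-tournament. If $v_1\cdots v_m$ is a path in $G$, then $w_j:=\canptn(v_j,\ldots,v_{j+r-1})$ for $1\le j\le m-r+1$ is a walk in $\PSG_r[S]$; since that induced subgraph is acyclic on $k$ vertices, the walk has at most $k$ vertices, so $m\le r-1+k$. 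Hence $f(n,r,k)\le r-1+k=O(1)$. (Equivalently, one can invoke \Cref{thm:const-path-thresh} with $t=k+1$, using $k\le\thresh{k+1}{\PSG_r}$ and monotonicity of $f$ in $k$.)

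For the lower bound, suppose $k>\acyc(\PSG_r)$ and let $M$ be arbitrary. Take $t=M$. Since $\thresh{t}{\PSG_r}\le\acyc(\PSG_r)<k$, we have $k\ge 1+\thresh{t}{\PSG_r}$, i.e.\ $k$ is at least the threshold for paths of length $t$. By \Cref{thm:const-path-thresh} (with monotonicity of $f$ in $k$), there is $n_0=n_0(r,M)$ such that $f(n,r,k)\ge r-1+t\ge M$ for every $n\ge n_0$. As $M$ was arbitrary, $f(n,r,k)\to\infty$, i.e.\ $f(n,r,k)=\omega(1)$.

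I do not expect a genuine obstacle here: the substantive work lives in \Cref{thm:const-path-thresh}, and what remains is bookkeeping. The one point that needs care is the quantifier order in the lower bound — one must produce, for each fixed target $M$, a single threshold $n_0$ beyond which $f(n,r,k)\ge M$ — but this is exactly what the ``for $n$ large enough'' clause of \Cref{thm:const-path-thresh} supplies once $t=M$ is fixed. The only other thing to keep straight is the comparison $\thresh{t}{\PSG_r}\le\acyc(\PSG_r)\le k$ (or $\ge$, on the upper-bound side), which the two elementary facts above settle.
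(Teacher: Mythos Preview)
Your proposal is correct and follows essentially the same route as the paper: both derive the result from \Cref{thm:const-path-thresh} via the stabilization of $\thresh{t}{\PSG_r}$ at $\acyc(\PSG_r)$ together with monotonicity of $f$ in $k$. The only cosmetic difference is that the paper picks $t=r!+1$ on the upper-bound side (so that $\thresh{t}{\PSG_r}=\acyc(\PSG_r)$ exactly) rather than your $t=k+1$, but the content is the same.
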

\begin{proof}
    Let $k=\acyc(\PSG_r)$.  We set $t=r!+1$, so that $k=\acyc(\PSG_r) = \thresh{t}{\PSG_r}$.    By \Cref{thm:const-path-thresh}, we have $f(n,r,k) < r - 1 + t$, implying that $f(n,r,k) = O(1)$.  Now, let $k=1+ \acyc(\PSG_r)$.  By \Cref{thm:const-path-thresh}, for each $t$, there exists $n$ such that $f(n,r,k) \ge r-1+t$, implying that $f(n,r,k)\to\infty$ as $n\to\infty$.  
\end{proof}

Since $\acyc(\PSG_r) + \trans{}{\PSG_r} = |V(\PSG_r)| = r!$, obtaining bounds on $\acyc(\PSG_r)$ is equivalent to obtaining bounds on $\trans{}{\PSG_r}$.  We note that the analogous problem for the de Bruijn graph has been solved.  The \emph{$m$-ary de Bruijn graph of order $n$}, denoted $B_{n,m}$, is the directed graph with vertex set $\{0,\ldots,m-1\}^n$ with an edge from $(u_1,\ldots,u_n)$ to $(v_1,\ldots,v_n)$ if and only if $(v_1,\ldots,v_{n-1}) = (u_2,\ldots,u_n)$.  An edge $uv$ in $B_{n,m}$ with $u_1=v_n$ is a \emph{shift edge}, and the spanning subgraph of $B_{n,n}$ formed by the shift edges is a disjoint collection of \emph{de Bruijn-shift cycles}.  Note that the subgraph of $B_{r,r}$ induced by $r$-tuples with distinct entries is the spanning subgraph of $\PSG_r$ formed by the shift edges.  Mykkeltveit~\cite{MYKKELTVEIT197240} proved that $\trans{}{B_{n,2}}$ equals the number of de Bruijn-shift cycles in $B_{n,2}$, and this proof generalizes to $m>2$ (see~\cite{ALVAREZ2024352}).  When $n$ is prime, the number of de Bruijn-shift cycles in $B_{n,m}$ equals $m+(m^n-m)/n$, since all de Bruijn-shift cycles have size $n$ except the $m$ cycles formed by the constant sequences.  Hence $\trans{}{B_{n,m}} = m^n(\frac{1}{n} + \frac{n-1}{nm^{n-1}})$ when $n$ is prime.  

In contrast to the de Bruijn graph, the cycle transversal number $\trans{}{\PSG_r}$ of the pattern-shift graph is larger than $\frac{1}{r}r!$, the number of shift cycles in $\PSG_r$ (see \Cref{thm:cycle-pack}).  Our next theorem shows that $\trans{}{\PSG_r}$ is not much larger.

\newcommand{\clust}[2]{\lambda_{#1}(#2)}
\begin{thm}\label{thm:cycle-trans}
We have $\trans{}{\PSG_r}\le r!\left(\frac{1}{r} + O\left(\frac{\log r}{r^2\log\log r}\right)\right)$.
\end{thm}
\begin{proof}
Let $t$ be a positive integer such that $r\ge \max\{3t+2,3\}$.  We show that $\trans{}{\PSG_r} \le r! \left(\frac{1}{r} + \frac{1}{rt!} + \frac{6(3t+2)}{r(r-1)}\right)$.   We construct a cycle transversal as follows.  Let $A = \{1,\ldots,t+1\} \cup \{r-2t,\ldots,r\}$, so that $A$ contains the indices of the first $t+1$ positions and the last $2t+1$ positions in permutations of $[r]$.  Let $S_0$ be the set of permutations $(u_1,\ldots,u_r)$ of $[r]$ in which $(u_i,u_j) = (r-1,r)$ for some $i,j$ with $\{i,j\}\subseteq A$ and $\{i,j\} \cap \{1,r-t,r\} \ne \emptyset$.  We use the values $r$ and $r-1$ as bookmarks for a pattern, and we show that this pattern persists along cycles in $\PSG_r - S_0$.

Fix a linear ordering $\preceq$ on the permutations of $[t]$.  Given a vertex $u\in V(\PSG_r)$ such that $u=(u_1,\ldots,u_r)$ and $u_j = \ell$ for some $j\le r-t$, the \emph{$\ell$-cluster} of $u$, denoted $\clust{\ell}{u}$, is the canonical pattern $\canptn(u_{j+1}, \ldots,u_{j+t})$.  Let $S_1$ be the set of all $u\in V(\PSG_r)-S_0$ such that $u=(u_1,\ldots,u_r)$, $u_{r-t} = r$, and $\clust{r-1}{u} \succeq \clust{r}{u}$.  Similarly, let $S_2$ be the set of all $u\in V(\PSG_r) - S_0$ such that $u=(u_1,\ldots,u_r)$ with $u_1 = r$ and $\clust{r}{u} \succeq \clust{r-1}{u}$.

Let $S=S_0\cup S_1\cup S_2$.  We claim that $S$ is a cycle transversal.  Let $B$ be the set of $(u_1,\ldots,u_r)\in V(\PSG_r)$ such that $(u_i,u_j)=(r-1,r)$ for some $i,j$ with $\{i,j\}\cap\{1,\ldots,r-t\}\ne\emptyset$.  Note that $B$ is the set of vertices $u$ in $\PSG_r$ such that either $\clust{r}{u}$ or $\clust{r-1}{u}$ is defined.  We define a function $g$ from $B$ to the permutations on $[t]$ as follows.  Given $u\in B$ with $u=(u_1,\ldots,u_r)$, if the index $j$ with $u_j=r$ satisfies $j\le r-t$, then we set $g(u)=\clust{r}{u}$.  Otherwise, we set $g(u)=\clust{r-1}{u}$.

Let $C$ be a cycle in $\PSG_r$.  Next, we show that $C$ contains a vertex in $S_0\cup B$.  By \Cref{prop:max-elt-travel}, there exists $u\in V(C)$ with $u=(u_1,\ldots,u_r)$ such that $u_1 = r$ or $u_r=r$.  If $u_1 = r$, then $u\in B$.  Otherwise, $u_r=r$.  Let $i$ be the index with $u_i=r-1$.  If $i \ge r-2t$, then $u\in S_0$.  Otherwise $u\in B$.  Hence every cycle in $\PSG_r - S_0$ contains a vertex in $B$.

Let $u=(u_1,\ldots,u_r)$ and $v=(v_1,\ldots,v_r)$ with $(u_i,u_j) = (r-1,r)$.  We show that if $uv\in E(\PSG_r)$ with $u,v\not\in S$ and $u\in B$, then $v\in B$ and $g(u)\preceq g(v)$.  Moreover, when $j=1$ or $j=r-t+1$, we have $g(u) \prec g(v)$.

Case 1: $j\le r-t$.  Note that $g(u)=\clust{r}{u}$.  

Subcase 1(a): Suppose that $j > 1$.  Since $u_j$ is the maximum entry in $(u_2,\ldots,u_r)$ and $uv\in E(\PSG_r)$, it follows that $v_{j-1}$ is the maximum entry in $(v_1,\ldots,v_{r-1})$.  If also $v_r < r$, then we have that $v_{j-1} = r$, and so $g(v)=\clust{r}{v}=\clust{r}{u} = g(u)$.  Otherwise, $j > 1$ and $v_r = r$, and since $v_{j-1}$ is the maximum entry in $(v_1,\ldots,v_{r-1})$, we have that $v_{j-1} = r-1$.  We have $g(v) = \clust{r-1}{v} = \clust{r}{u}=g(u)$.  

Subcase 1(b): Suppose $j=1$.  We show that $g(u) \prec g(v)$.  Note that since $j=1$ and $u\not\in S_0$, we have that $i\not\in A$, implying $t+1<i<r-2t$.  Since $u_i$ is the maximum entry in $(u_2,\ldots,u_r)$, it follows that $v_{i-1}$ is the maximum entry in $(v_1,\ldots,v_{r-1})$.  If $v_r < r$, then we have $v_{i-1} = r$ and so $g(v)=\clust{r}{v} = \clust{r-1}{u}$.  Since $u_1 = r$ and $u\not\in S_2$, we have $\clust{r-1}{u} \succ \clust{r}{u} = g(u)$, and so $g(v)\succ g(u)$.  If both $u_1 = r$ and $v_r = r$, then $v$ is obtained from $u$ by a cyclic shift, and we have $g(v) = \clust{r-1}{v} = \clust{r-1}{u} \succ \clust{r}{u} = g(u)$. 

Case 2: $j > r-t$.  Note that since $u\in B$ but $\clust{r}{u}$ is undefined, we have that $i\le r-t$ and $g(u)=\clust{r-1}{u}$.  Also, $v_r \le r-2$, since $v_r\in \{r-1,r\}$ and $v_{j-1} \ge u_j - 1 \ge r-1$ would contradict $v\not\in S_0$.  Since $v_r < r$, it follows that $v_{j-1}=r$.  Since $u\not\in S_0$ and $r$ appears in the last $t$ positions of $u$, it follows that $i \ge 2$.  Also, since $u_i$ is the second largest element in $(u_2,\ldots,u_r)$, it follows that $v_{i-1}$ is the second largest element in $(v_1,\ldots,v_{r-1})$.  Together with $v_r \le r-2$, it follows that $v_{i-1} = r-1$.  So $(v_{i-1},v_{j-1}) = (r-1,r)$ and therefore $\clust{r-1}{u} = \clust{r-1}{v}$.

Subcase 2(a): If $j-1 > r-t$, then since $v_{j-1} = r$ we have that $g(v)=\clust{r-1}{v} = \clust{r-1}{u} = g(u)$.  

Subcase 2(b): Otherwise $j-1=r-t$.  We show that $g(u)\prec g(v)$.  Since the index of the position in $v$ with value $r$ is $j-1$ and $j-1=r-t$, it follows that $g(v)=\clust{r}{v}$.  Also, since $v\not\in S_1$ and $v_{r-t} = r$, we have that $\clust{r-1}{v} \prec \clust{r}{v}$.  It follows that $g(v) = \clust{r}{v} \succ \clust{r-1}{v} = \clust{r-1}{u} = g(u)$.

Suppose for a contradiction that $C$ is a cycle in $\PSG_r - S$.  Since $C$ contains a vertex $u\in B$, it follows that $V(C) \subseteq B$.  Also, since $g(x) \preceq g(y)$ for each $xy\in E(C)$, it follows that $g(x)=g(y)$ for all $x,y\in V(C)$.  Note that if $uv\in E(C)$ with $u=(u_1,\ldots,u_r)$, $v=(v_1,\ldots,v_r)$, and $u_j = r$, then either $j=1$, or $v_{j-1} = r$, or $v_r = r$.  The case $j=1$ is excluded, since then $g(u) \prec g(v)$.  It follows that $C$ has a vertex with $r$ in the last position.  Choose $w\in V(C)$ with $w = (w_1,\ldots,w_r)$ such that the index of the position $j$ with $w_j = r$ is minimized subject to $j\ge r-t+1$.  Let $w'$ be the successor of $w$ in $C$, with $w'=(w'_1,\ldots,w'_r)$.  Since $j=r-t+1$ would imply $g(w)\prec g(w')$, we have $j > r-t+1$.  Note that $w'_r < r$, or else $w'_r = r$ and $w'_{j-1} = r-1$, contradicting $w'\not\in S_0$.  Hence it follows that $w'_{j-1} = r$ and $j-1 > r-t$, contradicting the selection of $w$.  It follows that $S$ is a cycle transversal.

It remains to bound $|S|$.  Note that $|A| = (t+1) + (2t+1) = 3t+2$.  Since each permutation in $S_0$ puts $r$ or $r-1$ in a position indexed by $1, r-t, r$ and the other in a position indexed by $A$, we have that $|S_0| \le 6\cdot |A|\cdot (r-2)! \le 6(3t+2)(r-2)!$.  

Next, we bound $S_1$ and $S_2$.  Let $p$ be the probability that $\pi_1 \preceq \pi_2$ when permutations $\pi_1$ and $\pi_2$ of $[t]$ are chosen uniformly and independently at random.  Conditioning on whether or not $\pi_1$ and $\pi_2$ are distinct gives $p=\frac{1}{2}\cdot (1-\frac{1}{t!}) + 1\cdot \frac{1}{t!} = \frac{1}{2} + \frac{1}{2(t!)}$.  

Choose $u\in V(\PSG_r)$ with $u=(u_1,\ldots,u_r)$ uniformly at random.  We bound $\Pr(u\in S_1)$ by conditioning on the event $i\in A$, where $i$ is the index such that $u_i=r-1$.  Note that $\Pr(u\in S_1~|~i\in A) = 0$, since $u\in S_1$ requires that $u_{r-t} = r$ and that $u\not\in S_0$.  Also, we have $\Pr(u\in S_1~|~i\not\in A) = \frac{1}{r-1}\cdot p$ since then $u_{r-t} = r$ with probability $1/(r-1)$, and, conditioned on $i\not\in A$, the $(r-1)$ and $r$ clusters in $u$ are disjoint and so $\clust{r-1}{u}$ and $\clust{r}{u}$ are independently and uniformly distributed among all permutations of $[t]$.  Hence $\Pr(u\in S_1) = \Pr(u\in S_1 | i\in A)\cdot \Pr(i\in A) + \Pr(u\in S_1 | i\not\in A)\cdot \Pr(i\not\in A) \le 0 + \frac{p}{r-1}\cdot \frac{r-|A|}{r} < \frac{p}{r}$.  A similar computation shows that $\Pr(u\in S_2) < \frac{p}{r}$.  Hence $|S| < r!(\frac{6(3t+2)}{r(r-1)} + \frac{2p}{r}) \le r!(\frac{6(3t+2)}{r(r-1)} + \frac{1}{r}\cdot(1+\frac{1}{t!}))$.  When $t=O(\log r/\log\log r)$, we obtain $|S|\le r!\left(\frac{1}{r} + O\left(\frac{\log r}{r^2\log\log r}\right)\right)$.
\end{proof}

Recall that when applied to $(a_1,\ldots,a_r)$, the forward shift operation produces $(a_2,\ldots,a_r,a_1)$ and the backward operation produces $(a_r,a_1,\ldots,a_{r-1})$.  To obtain lower bounds on $\trans{}{\PSG_r}$, we find disjoint cycles.  We begin with the family $\F$ of shift cycles and obtain a slight improvement by replacing certain cycles in $\F$ with a pair of cycles.

\begin{lem}\label{lem:second-chord}
    Let $u,v\in V(\PSG_r)$.  Let $u'$ be the vertex obtained from $u$ by applying the forward shift operation, and let $v'$ be the vertex obtained from $v$ by applying the backward shift operation.  We have $uv\in E(\PSG_r)$ if and only if $v'u'\in E(\PSG_r)$.
\end{lem}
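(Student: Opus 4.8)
The plan is to unwind all the definitions directly; no clever idea is needed here. Write $u=(u_1,\ldots,u_r)$ and $v=(v_1,\ldots,v_r)$. By the definitions of the shift operations, the forward shift sends $u_1$ to the end, so $u'=(u_2,\ldots,u_r,u_1)$, and the backward shift sends $v_r$ to the front, so $v'=(v_r,v_1,\ldots,v_{r-1})$. In particular $u'$ and $v'$ are again permutations of $[r]$ and hence vertices of $\PSG_r$, so the statement $v'u'\in E(\PSG_r)$ makes sense.

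Next I would unpack the two edge conditions. By the definition of $\PSG_r$, $uv\in E(\PSG_r)$ if and only if the $(r-1)$-suffix of $u$, namely $(u_2,\ldots,u_r)$, and the $(r-1)$-prefix of $v$, namely $(v_1,\ldots,v_{r-1})$, pattern-match. On the other hand, $v'u'\in E(\PSG_r)$ if and only if the $(r-1)$-suffix of $v'$ and the $(r-1)$-prefix of $u'$ pattern-match. But the forward shift was arranged precisely so that the $(r-1)$-prefix of $u'$ is $(u_2,\ldots,u_r)$, and the backward shift so that the $(r-1)$-suffix of $v'$ is $(v_1,\ldots,v_{r-1})$. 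Thus $v'u'\in E(\PSG_r)$ if and only if $(v_1,\ldots,v_{r-1})$ and $(u_2,\ldots,u_r)$ pattern-match.

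Finally I would invoke the fact that order-isomorphism is a symmetric relation: $a$ and $b$ pattern-match if and only if $b$ and $a$ pattern-match. Hence the two conditions above are literally the same, and the equivalence $uv\in E(\PSG_r)\iff v'u'\in E(\PSG_r)$ follows. There is no genuine obstacle in this argument; the only point requiring a moment's care is the bookkeeping of which window of the shifted tuple plays the role of the relevant $(r-1)$-prefix or $(r-1)$-suffix, together with the short remark that the shifted tuples remain permutations of $[r]$.
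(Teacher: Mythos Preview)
Your proof is correct and follows essentially the same approach as the paper: both write out $u'=(u_2,\ldots,u_r,u_1)$ and $v'=(v_r,v_1,\ldots,v_{r-1})$ and observe that the $(r-1)$-suffix of $v'$ and the $(r-1)$-prefix of $u'$ are precisely $(v_1,\ldots,v_{r-1})$ and $(u_2,\ldots,u_r)$. The paper phrases the pattern-match condition as $\canptn(u_2,\ldots,u_r)=\canptn(v_1,\ldots,v_{r-1})$, making the symmetry you note automatic, but the content is identical.
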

\begin{proof}
Let $u=(u_1,\ldots,u_r)$ and $v=(v_1,\ldots,v_r)$.  We have $u'=(u_2,\ldots,u_r,u_1)$ and $v'=(v_r,v_1,\ldots,v_{r-1})$.  We have that $uv\in E(\PSG_r)$ if and only if $\canptn(u_2,\ldots,u_r)=\canptn(v_1,\ldots,v_{r-1})$ if and only if $v'u'\in E(\PSG_r)$.
\end{proof}

\begin{lem}\label{lem:split-cycle}
Let $r\ge 2$ and let $C$ be a shift cycle in $\PSG_r$.  If $C$ has a chord or loop, then $V(C)$ can be partitioned into two sets that contain cycles.
\end{lem}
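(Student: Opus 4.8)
The plan is to pick a convenient labeling of the shift cycle and then exhibit the two required subcycles directly, using \Cref{lem:second-chord} to promote the given chord into a second ``dual'' chord. Write $C$ as the directed cycle $v_0 \to v_1 \to \cdots \to v_{r-1} \to v_0$ in which $v_{k+1}$ is the forward shift of $v_k$ (indices mod $r$); since the forward shift of a vertex is always one of its out-neighbours in $\PSG_r$ (the $(r-1)$-suffix of $v_k$ equals the $(r-1)$-prefix of its forward shift), this really is a directed cycle, and $|V(C)| = r$. In this labeling a chord or loop of $C$ is exactly an edge $v_a \to v_b$ of $\PSG_r$ between vertices of $C$ with $b \not\equiv a+1 \pmod r$, the loop case being $a = b$.

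First I would apply \Cref{lem:second-chord} to the edge $v_a \to v_b$: the forward shift of $v_a$ is $v_{a+1}$ and the backward shift of $v_b$ is $v_{b-1}$, so the lemma produces a second edge $v_{b-1} \to v_{a+1}$ of $\PSG_r$. I then partition $V(C)$ by cutting the cyclic order at the two places dictated by these edges: let $X = \{v_b, v_{b+1}, \ldots, v_a\}$ and $Y = \{v_{a+1}, v_{a+2}, \ldots, v_{b-1}\}$, with the index ranges read cyclically mod $r$. Traversing $C$ starting from $v_b$ one meets precisely $X$ and then precisely $Y$ before returning, so $\{X,Y\}$ partitions $V(C)$; here $X \ne \emptyset$ always, and $Y \ne \emptyset$ exactly because $b \not\equiv a+1$.

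It then remains to find a cycle inside each part. In $X$, the path of cycle-edges $v_b \to v_{b+1} \to \cdots \to v_a$ closed by the chord $v_a \to v_b$ is a directed cycle on $X$; when $a = b$ this degenerates to the single vertex $v_a$ together with its loop. In $Y$, the path of cycle-edges $v_{a+1} \to v_{a+2} \to \cdots \to v_{b-1}$ closed by the dual edge $v_{b-1} \to v_{a+1}$ is a directed cycle on $Y$; when $|Y| = 1$ (equivalently $b \equiv a+2 \pmod r$, a case which also absorbs $r = 2$) the ``closing edge'' is a loop at the unique vertex $v_{a+1} = v_{b-1}$, and \Cref{lem:second-chord} is exactly what guarantees this loop is present. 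I expect the only delicate point to be precisely this bookkeeping of the small degenerate configurations — $a = b$, $b \equiv a+2$, and $r = 2$ — where a ``cycle'' may be a loop on a single vertex and one must double-check that \Cref{lem:second-chord} supplies it; the structural content (shift cycles are directed cycles, chords come in dual pairs) involves no computation.
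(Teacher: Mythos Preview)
Your proposal is correct and follows essentially the same route as the paper's proof: label the shift cycle, take the chord/loop $v_a v_b$, invoke \Cref{lem:second-chord} to obtain the dual edge $v_{b-1}v_{a+1}$, and cut $V(C)$ into the two cyclic arcs $\{v_b,\ldots,v_a\}$ and $\{v_{a+1},\ldots,v_{b-1}\}$, each closed into a cycle by one of the two chords. Your handling of the degenerate cases (loops and $|Y|=1$) is a bit more explicit than the paper's, but the argument is the same.
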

\begin{proof}
Let $C=\cycle{u_0,\ldots,u_{r-1}}$.  If $C$ has a loop $u_iu_i$, then \Cref{lem:second-chord} implies $u_{i-1}u_{i+1}\in E(\PSG_r)$ (subscript arithmetic modulo $r$) and we may partition $V(C)$ into the singleton $\{u_i\}$ and the vertices of the cycle $\cycle{u_{i+1},\ldots,u_{i-1}}$.  Let $u_iu_j$ be a chord of $C$, and note that $j\not\equiv i+1 \pmod{r}$.  By \Cref{lem:second-chord}, we have that $u_{j-1}u_{i+1}\in E(\PSG_r)$.  Following $C$ from $u_j$ to $u_i$ and traversing $u_iu_j$ completes a cycle $C_1$.  Since $j\not\equiv i+1\pmod{r}$, we have that $V(C_1)$ is a proper subset of $V(C)$, and in particular $u_{i+1}\not\in V(C_1)$.  Following $C$ from $u_{i+1}$ to $u_{j-1}$ and traversing $u_{j-1}u_{i+1}$ completes a second cycle $C_2$ that is disjoint from $C_1$.  
\end{proof}

\begin{lem}\label{lem:chorded-shift-cycles-count}
The number of shift cycles in $\PSG_r$ that contain chords or loops equals $\varphi(r)$, where $\varphi(r)$ is the Euler Totient Function given by $\varphi(r)=|\{d\in [r]\st \gcd(d,r) = 1\}|$.
\end{lem}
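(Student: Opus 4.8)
My plan is to pin down the shift cycles that carry a chord or loop as a concrete family of size $\varphi(r)$. I index the coordinates of a permutation $w=(w_1,\dots,w_r)$ of $[r]$ by $\ZZ/r\ZZ$, so subscripts are read modulo $r$; a shift cycle is then the rotation orbit $\cycle{u_1,\dots,u_r}$ of some $w=u_1$, with cycle edges $u_iu_{i+1}$. Call $w$ \emph{arithmetic} if, after identifying $[r]$ with $\ZZ/r\ZZ$ via $r\leftrightarrow 0$, the map $i\mapsto w_i$ is affine, i.e.\ $w_i\equiv\alpha+\gamma i\pmod r$ for some $\alpha$ and some $\gamma$ coprime to $r$. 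Since $\alpha=w_r$ and $\gamma=w_1-w_r$ are recoverable from $w$, the arithmetic permutations biject with pairs $(\alpha,\gamma)\in\ZZ/r\ZZ\times(\ZZ/r\ZZ)^\times$; a single rotation sends $(\alpha,\gamma)$ to $(\alpha+\gamma,\gamma)$, so (as $\gcd(\gamma,r)=1$) each slope $\gamma$ accounts for exactly one full shift cycle, consisting of $r$ arithmetic permutations. Hence there are precisely $\varphi(r)$ shift cycles of arithmetic permutations, and it remains to show these are exactly the shift cycles carrying a chord or loop.

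To that end I would first recast ``chord or loop'' in terms of windows. The $(r-1)$-suffix of $u_i$ and the $(r-1)$-prefix of $u_{i+1}$ both equal the length-$(r-1)$ window $(w_{i+1},\dots,w_{i+r-1})$ of $w$ omitting coordinate $i$, so the shift cycle of $w$ carries a chord or loop exactly when, for some coordinate $a$ and some $m\not\equiv 0\pmod r$, the window omitting $a$ and the window omitting $a+m$ are order isomorphic. Two sequences of distinct values are order isomorphic iff the unique order-preserving bijection between their value sets carries one to the other entrywise; since our two windows have value sets $[r]\setminus\{w_a\}$ and $[r]\setminus\{w_{a+m}\}$, this says exactly that $w_{s+m}=\psi(w_s)$ for every $s\ne a$, where $\psi\colon[r]\setminus\{w_a\}\to[r]\setminus\{w_{a+m}\}$ is that order-preserving bijection.

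The crux — and the step I expect to be hardest — is the following rigidity. The identity $w_{s+m}=\psi(w_s)$ $(s\ne a)$ exhibits $\psi$ as the restriction to $[r]\setminus\{w_a\}$ of the permutation $w\sigma w^{-1}$ of $[r]$, where $\sigma$ is the rotation $s\mapsto s+m$ of $\ZZ/r\ZZ$. On the one hand, $w\sigma w^{-1}$ has $\gcd(m,r)$ cycles of common length $r/\gcd(m,r)$, and removing the single point $w_a$ turns the cycle through it into a path and leaves the other $\gcd(m,r)-1$ cycles intact. On the other hand, $\psi$ — order preserving between two $(r-1)$-element subsets of $[r]$ differing in one element — is the identity outside the interval spanned by $w_a$ and $w_{a+m}$ and a shift by one inside it; as a partial map it is thus a set of fixed points together with a single path, and has no cycle of length $\ge 2$. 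Matching the two descriptions forces $\gcd(m,r)-1=0$ (otherwise $\psi$ would inherit a cycle of length $r/\gcd(m,r)\ge 2$), so $\gcd(m,r)=1$; then $w\sigma w^{-1}$ is a single $r$-cycle, its restriction to $[r]\setminus\{w_a\}$ is a single path on $r-1$ points with no fixed point, and this matches the explicit shape of $\psi$ only if $\{w_a,w_{a+m}\}=\{1,r\}$. Reading $\psi$ off along $a,a+m,a+2m,\dots$ then gives $w_{a+km}\equiv k\pmod r$ for all $k$ (when $w_a=r$) or $w_{a+km}\equiv 1-k\pmod r$ for all $k$ (when $w_a=1$); either way $w$ is arithmetic, of slope $m^{-1}$ or $-m^{-1}$.

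For the converse I would run this backwards: given an arithmetic $w$ of slope $\gamma$, put $m=\gamma^{-1}$ and let $a$ be the coordinate at which $w$ equals $r$; then $w_{a+km}\equiv k\pmod r$, and a one-line check of the condition $w_{s+m}=\psi(w_s)$ $(s\ne a)$ shows the window omitting $a$ and the window omitting $a+m$ are order isomorphic, so the shift cycle of $w$ has a chord or loop. Together with the count in the first paragraph this yields exactly $\varphi(r)$ shift cycles with a chord or loop. As noted, the real obstacle is the rigidity step — wringing the global arithmetic structure of $w$ (in particular $\gcd(m,r)=1$) out of the deceptively weak local data carried by $\psi$; routing it through a comparison of the cycle structures of $w\sigma w^{-1}$ and of the highly constrained $\psi$ seems cleanest, though one could instead note that $|w_{s+m}-w_s|\le 1$ for all $s\ne a$ and exclude $\gcd(m,r)>1$ by a direct combinatorial argument.
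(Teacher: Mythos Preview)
Your proof is correct and reaches the same characterization as the paper: the chorded shift cycles are exactly the rotation orbits of the ``arithmetic'' permutations $w_i\equiv \alpha+\gamma i\pmod r$ with $\gcd(\gamma,r)=1$, one orbit per slope $\gamma$, giving $\varphi(r)$ in all. The overall architecture matches the paper, but your rigidity step is organised differently. The paper argues directly that the extremes $1$ and $r$ cannot both appear in both windows (as minimum and maximum they would have to sit at the same position in each, which is impossible once the omitted coordinates differ), whence $\{w_a,w_{a+m}\}=\{1,r\}$; it then reads off $w_{s+m}=w_s\pm 1$ from the explicit order-preserving bijection and iterates, obtaining $\gcd(m,r)=1$ only at the end as a byproduct of the values $w_{a+km}\equiv k$ hitting every residue. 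You instead interpret $\psi$ as the restriction of the conjugated rotation $w\sigma w^{-1}$ and compare cycle structures: since the order-preserving $\psi$ is visibly a block of fixed points plus a single shift-path, any surviving cycle of length $\ge 2$ in $w\sigma w^{-1}$ is ruled out, forcing $\gcd(m,r)=1$ first; the absence of fixed points then pins down $\{w_a,w_{a+m}\}=\{1,r\}$. Your route is a bit more conceptual and, pleasantly, avoids the paper's appeal to the auxiliary second-chord lemma (\Cref{lem:second-chord}) used there to swap the roles of $1$ and $r$, since you handle both cases symmetrically. The paper's route is slightly shorter and more elementary.
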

\begin{proof}
Let $d\in[r]$ be relatively prime to $r$, and let $x\in V(\PSG_r)$ be the vertex such that $x=(x_0,\ldots,x_{r-1})$ with $x_s \equiv ds \pmod{r}$.  Let $u_j$ be the vertex in $\PSG_r$ obtained from $x$ by applying the forward shift operation $j$ times, and let $C$ be the shift cycle $\cycle{u_0,\ldots,u_{r-1}}$.  Let $u_j = (u_{j,0},\ldots,u_{j,r-1})$ and observe that $u_{j,s} = u_{0,s+j} = x_{s+j}$.  Let $d'$ be the inverse of $d$ modulo $r$, and note that $x_0 = r$ and $x_{d'} = 1$.  We claim that $u_{d'}u_{1}\in E(\PSG_r)$.  Indeed, we have $u_{d',s} = x_{d' + s} \equiv d(d' + s) \equiv 1 + ds\pmod{r}$ and $u_{1,s-1} = x_{s} \equiv ds\pmod{r}$.  It follows that $u_{d',0}\equiv 1\pmod{r}$ implying $u_{d',0}=1$, and similarly $u_{1,r-1} \equiv 0\pmod{r}$ implying $u_{1,r-1} = r$.  Also, we have that $u_{d',s} -1 \equiv u_{1,s-1}\pmod{r}$.  Since $2\le u_{d',s} \le r$ and $1\le u_{1,s-1} \le r-1$ for $1\le s\le r-1$, it follows that $u_{d',s}-1 = u_{1,s-1}$ for $1\le s\le r-1$.  Since the $(r-1)$-suffix of $u_{d'}$ matches the pattern of the $(r-1)$-prefix of $u_1$, we have that $u_{d'}u_1\in E(\PSG_r)$ as claimed.  Note that $u_{d'}u_1$ is a chord or a loop, since $1-d'\equiv 1\pmod{r}$ if and only if $d'\equiv 0\pmod{r}$, which is impossible since $d'$ is the inverse of $d$.  Allowing $d$ to range over the integers in $[r]$ that are relatively prime to $r$ produces distinct cycles; indeed, given such a cycle $C$, we have that $d$ is the value that follows $r$ in each vertex of $C$.    

Conversely, let $C$ be a shift cycle in $\PSG_r$ that contains a chord or a loop.  Let $C=\cycle{u_0,\ldots,u_{r-1}}$ with $u_j = (u_{j,0},\ldots,u_{j,r-1})$ for each $j$.  We may assume the vertices of $C$ are indexed so that $u_{0,0} = r$.  Let $x=u_0$ with $x=(x_0,\ldots,x_{r-1})$.  As above, we have $u_{j,s} = x_{j+s}$.  Let $u_iu_j \in E(\PSG_r)$ with $j-i\not\equiv 1\pmod{r}$.  The $(r-1)$-suffix of $u_i$ is $(u_{i,1},\ldots,u_{i,r-1})$ or equivalently $(x_{i+1}, \ldots, x_{i+r-1})$; this is the list $\sigma_1$ obtained from the natural cyclic order on $x$ by deleting $x_i$ and recording the remaining $r-1$ elements in order.  Similarly, the $(r-1)$-prefix of $u_j$ is $(u_{j,0},\ldots,u_{j,r-2})$ or equivalently $(x_j,\ldots,x_{j+r-2})$; this is the list $\sigma_2$ obtained from the natural cyclic order on $x$ by deleting $x_{j-1}$ and recording the remaining elements in order.  Note that $x_i$ and $x_{j-1}$ are distinct entries of $x$.  Since $u_iu_j\in E(\PSG_r)$, we have that $\sigma_1$ and $\sigma_2$ are both lists of size $r-1$ that pattern-match.  Note that $1$ cannot appear in both $\sigma_1$ and $\sigma_2$, since then the minimum element $1$ would appear at distinct indices in $\sigma_1$ and $\sigma_2$.  Similarly, $r$ cannot appear in both $\sigma_1$ and $\sigma_2$.  It follows that $\{x_i,x_{j-1}\} = \{1,r\}$.  We may assume without loss of generality that $x_i = 1$ and $x_{j-1} = r$ (implying $j=1$ as $x_0 = u_{0,0} = r$), since otherwise we may apply the same argument to $u_{j-1}u_{i+1}\in E(\PSG_r)$, which interchanges $i$ and $j-1$.

Note that $\sigma_1 = (x_{i+1},\ldots,x_{i+r-1})$ and $\sigma_1$ is a permutation of $\{2,\ldots,r\}$ since $x_i = 1$.  Similarly, $\sigma_2 = (x_j,\ldots,x_{j+r-2}) = (x_1,\ldots,x_{r-1})$ and $\sigma_2$ is a permutation of $\{1,\ldots,r-1\}$ since $x_0 = r$.  Since $\sigma_1$ and $\sigma_2$ pattern match, we have that $x_{i+t} - 1 = x_{t}$ for $1\le t\le r-1$.  Also, when $t=0$, we have $x_{i+0} - 1 = x_i - 1 = 0$ and $x_0 = r$, implying that $x_{t+i} \equiv x_{t} + 1\pmod{r}$ for all $t$.  Iterating this congruence gives $x_{\ell i} \equiv x_{(\ell -1)i} + 1 \equiv \cdots \equiv x_0 + \ell \equiv r + \ell \equiv \ell\pmod{r}$.  Since $\ell$ ranges over all congruence classes modulo $r$, so must $\ell i$.  It follows that $i$ and $r$ are relatively prime; let $d$ be the inverse of $i$ modulo $r$.  Setting $\ell = sd$, we have that $s$ ranges over all congruence classes with $\ell$ and so $x_{\ell i} \equiv \ell\pmod{r}$ becomes $x_s \equiv sd\pmod{r}$.  It follows that $C$ is one of the $\varphi(r)$ cycles constructed above.
\end{proof}

Combining these results, we obtain the following.

\begin{thm}\label{thm:cycle-pack}
    Let $r\ge 2$.  We have $\trans{}{\PSG_r} \ge r!(\frac{1}{r} + \frac{\varphi(r)}{r!})$.
\end{thm}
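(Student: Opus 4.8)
The plan is to exhibit a family of pairwise vertex-disjoint cycles in $\PSG_r$ of size $r!/r + \varphi(r)$; since any cycle transversal must contain at least one (necessarily distinct) vertex from each cycle in such a family, this immediately yields $\trans{}{\PSG_r}\ge r!/r + \varphi(r) = r!\left(\frac{1}{r}+\frac{\varphi(r)}{r!}\right)$.

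The starting point is the family $\F$ of shift cycles discussed just before the statement: the cyclic-shift relation partitions $V(\PSG_r)$ into $r!/r$ classes, each inducing a cycle of size $r$ (each class has exactly $r$ distinct vertices, since a permutation with distinct entries is fixed by no nontrivial cyclic shift), and these cycles are pairwise vertex-disjoint by construction. This alone recovers $\trans{}{\PSG_r}\ge r!/r$; the goal is to do slightly better on the shift cycles that carry a chord or a loop.

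By \Cref{lem:chorded-shift-cycles-count}, exactly $\varphi(r)$ members of $\F$ contain a chord or a loop. For each such chorded shift cycle $C$, \Cref{lem:split-cycle} partitions $V(C)$ into two parts, each of which contains a cycle; these two cycles are vertex-disjoint from each other, and since they lie inside $V(C)$ they are also disjoint from every cycle attached to any other member of $\F$ (the shift cycles having pairwise disjoint vertex sets). I would therefore form the family obtained by keeping each of the $r!/r - \varphi(r)$ chord-free shift cycles and replacing each of the $\varphi(r)$ chorded shift cycles by the two disjoint cycles it splits into. This family consists of $(r!/r - \varphi(r)) + 2\varphi(r) = r!/r + \varphi(r)$ pairwise vertex-disjoint cycles, and the claimed bound follows as above.

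I do not expect a genuine obstacle: the two cited lemmas carry the argument, and the only care needed is the disjointness bookkeeping — that cycles produced from distinct shift cycles (split or not) never share a vertex — which is immediate because the shift cycles partition $V(\PSG_r)$.
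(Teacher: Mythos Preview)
Your proposal is correct and follows essentially the same approach as the paper: start with the $r!/r$ shift cycles, use \Cref{lem:chorded-shift-cycles-count} to identify the $\varphi(r)$ that carry a chord or loop, and use \Cref{lem:split-cycle} to replace each of these by two disjoint cycles, yielding $r!/r+\varphi(r)$ pairwise disjoint cycles. Your write-up is slightly more explicit than the paper's about the disjointness bookkeeping, but the argument is the same.
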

\begin{proof}
Let $\F$ be the family of shift cycles in $\PSG_r$ and note that $|\F| = r!/r$.  By \Cref{lem:chorded-shift-cycles-count}, we have that $\varphi(r)$ cycles in $\F$ have chords or loops, and by \Cref{lem:split-cycle}, each of these can be replaced with a pair of cycles, giving a family of $r!/r + \varphi(r)$ disjoint cycles in $\PSG_r$.
\end{proof}

\begin{cor}\label{cor:growing-threshold-bounds}
    Let $r\ge 3$.  We have $1-\frac{1}{r} - O(\frac{\log r}{r^2\log\log r}) \le \acyc(\PSG_r)/r! \le 1 - \frac{1}{r} - \frac{\varphi(r)}{r!}$.  Hence, the threshold for growing paths in $r$-uniform tournaments is in the range $\left[\left(1-\frac{1}{r}-O(\frac{\log r}{r^2\log\log r})\right)r!, ~\left(1-\frac{1}{r} - \frac{\varphi(r)-1}{r!}\right)r!\right]$.
\end{cor}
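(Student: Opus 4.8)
The statement is a bookkeeping consequence of the results already assembled in this section together with the complementation identity $\trans{}{G} + \acyc(G) = |V(G)|$, applied with $G = \PSG_r$ and $|V(\PSG_r)| = r!$. The plan is as follows. First, to get the lower bound on $\acyc(\PSG_r)/r!$, I would invoke \Cref{thm:cycle-trans}, which gives $\trans{}{\PSG_r} \le r!\left(\frac{1}{r} + O\!\left(\frac{\log r}{r^2\log\log r}\right)\right)$; subtracting this from $r!$ via the identity and dividing by $r!$ yields $\acyc(\PSG_r)/r! \ge 1 - \frac{1}{r} - O\!\left(\frac{\log r}{r^2\log\log r}\right)$. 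For the upper bound on $\acyc(\PSG_r)/r!$, I would use the preceding (unnumbered) theorem, which exhibits $r!/r + \varphi(r)$ pairwise disjoint cycles in $\PSG_r$ and hence gives $\trans{}{\PSG_r} \ge r!\left(\frac{1}{r} + \frac{\varphi(r)}{r!}\right)$; subtracting from $r!$ and dividing gives $\acyc(\PSG_r)/r! \le 1 - \frac{1}{r} - \frac{\varphi(r)}{r!}$. Together these establish the displayed sandwich for $\acyc(\PSG_r)/r!$.

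For the statement about the growing-path threshold, I would then appeal to \Cref{thm:growing-path-tourn-threshold}, which identifies this threshold as exactly $1 + \acyc(\PSG_r)$. Plugging in the two bounds just obtained places the threshold in the interval $\left[\,1 + r!\left(1 - \frac{1}{r} - O\!\left(\frac{\log r}{r^2\log\log r}\right)\right),\ 1 + r!\left(1 - \frac{1}{r} - \frac{\varphi(r)}{r!}\right)\right]$. At the right endpoint, $1 + r!\left(1-\frac1r\right) - \varphi(r) = r!\left(1 - \frac1r - \frac{\varphi(r)-1}{r!}\right)$, which is exactly the claimed upper endpoint. At the left endpoint, the additive $+1$ is swamped by the error term $r!\cdot O\!\left(\frac{\log r}{r^2\log\log r}\right) = O\!\left(\frac{(r-2)!\,\log r}{\log\log r}\right)$, which tends to infinity, so it may be absorbed into the $O(\cdot)$ and the endpoint reads $r!\left(1 - \frac1r - O\!\left(\frac{\log r}{r^2\log\log r}\right)\right)$, as claimed.

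There is essentially no obstacle: every ingredient has already been proved, so the ``proof'' is a short chain of substitutions. The only two points that deserve a line of care are (i) verifying that \Cref{thm:cycle-trans} is in force in the asymptotic regime at issue — its proof treats $t$ as a free parameter and finally selects $t = \Theta(\log r/\log\log r)$, for which the hypothesis $r \ge \max\{3t+2,3\}$ holds once $r$ is large, which is exactly where the $O(\cdot)$ bound is asserted — and (ii) the absorption of the additive constant $1$ into the asymptotic error on the lower endpoint, as computed above.
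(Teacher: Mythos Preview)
Your proposal is correct and follows essentially the same approach as the paper: apply the complementation identity $\trans{}{\PSG_r}+\acyc(\PSG_r)=r!$ to the upper bound on $\trans{}{\PSG_r}$ from \Cref{thm:cycle-trans} and the lower bound from the preceding theorem, then invoke \Cref{thm:growing-path-tourn-threshold} to translate to the threshold. Your write-up is in fact more careful than the paper's, since you explicitly note the absorption of the additive $1$ into the $O(\cdot)$ term and the eventual validity of the hypothesis $r\ge 3t+2$ for $t=\Theta(\log r/\log\log r)$.
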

\begin{proof}
Recall that $\trans{}{\PSG_r} + \acyc(\PSG_r) = |V(\PSG_r)| = r!$.  The bounds on $\acyc(\PSG_r)$ arise by taking $t=\Theta(\log r/\log\log r)$ in \Cref{thm:cycle-trans} and invoking \Cref{thm:cycle-pack}.  By \Cref{thm:growing-path-tourn-threshold}, the threshold for growing paths equals $1+\acyc(\PSG_r)$.
\end{proof}

\subsection{Linear Path Threshold}

In this section, we show that when $k \ge (1-\frac{1}{r}+\frac{1}{r!})r!$, an $(r,k)$-tournament has paths of linear size.  The following proposition is well-known and considered folklore.  We include the short proof for completeness.

\begin{prop}\label{prop:avg-to-min-deg}
Let $G$ be an $r$-graph with average degree $d$.  We have that $G$ has a subgraph with minimum degree at least $d/r$.  Moreover, if $r\ge 2$ and $d>0$, then $G$ contains a subgraph with minimum degree larger than $d/r$.
\end{prop}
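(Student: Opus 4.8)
The plan is the classical ``repeatedly delete low-degree vertices'' argument, run with two slightly different deletion thresholds for the two parts of the statement.

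Write $n=|V(G)|$ and $m=|E(G)|$. Since each edge contributes to the degree of exactly $r$ vertices, $\sum_v \deg_G(v) = rm$, so $m = dn/r$. For the first statement I would run the process: while the current graph has a vertex of degree strictly less than $d/r$, choose such a vertex and delete it together with all edges through it. List the deleted vertices $v_1,v_2,\ldots$ in the order they are removed. The accounting I would use is that each edge is destroyed at the moment the first of its vertices (in this deletion order) is removed, at which point it contributes $1$ to that vertex's current degree; hence, if the process removed all $n$ vertices, the total number of edges destroyed, which is $m$, would equal $\sum_i(\text{degree of }v_i\text{ at its deletion}) < n\cdot(d/r) = m$, a contradiction. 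So the process halts at a nonempty subgraph $H$ in which every vertex has degree at least $d/r$.

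For the second statement, assume $r\ge 2$ and $d>0$ and run the same process with the weaker rule: delete a vertex whenever its current degree is \emph{at most} $d/r$. If this removed all $n$ vertices, the same edge-count gives $m = \sum_i(\text{degree of }v_i\text{ at its deletion}) \le n(d/r) = m$, forcing equality, so every $v_i$ has current degree exactly $d/r$ when deleted. In particular $d/r$ is a positive integer, hence $d/r \ge 1$. But when only one vertex remains it has degree $0$, since an edge requires $r\ge 2$ vertices — contradicting the last deleted vertex having degree exactly $d/r\ge 1$. So the process halts at a nonempty $H$ with every vertex of degree greater than $d/r$.

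The argument is essentially bookkeeping; the only point needing care is the claim that (when all vertices are eventually deleted) the number of edges destroyed equals the sum of the ``degrees at deletion time,'' which is exactly the ``first endpoint to leave'' accounting, and, for the strengthening, the observation that forcing equality pins $d/r$ to a positive integer so that the single-vertex graph having no edges yields the contradiction.
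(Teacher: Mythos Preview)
Your proof is correct. Both your approach and the paper's are variants of the standard greedy argument, but the packaging differs. The paper takes $G_0$ to be a \emph{smallest} (in number of vertices) subgraph with average degree at least $d$, and then argues that removing a minimum-degree vertex of degree $k$ from $G_0$ must drop the average degree below $d$ by extremality; this yields $r(m-k)/(n-1) < d \le rm/n$ and hence $k > m/n \ge d/r$ in one stroke. Your version instead runs the deletion process with the fixed threshold $d/r$ and extracts the strict inequality from the endgame observation that, in the equality case, $d/r$ would have to be a positive integer while the last surviving vertex necessarily has degree $0$ when $r\ge 2$. The paper's extremal-object formulation handles both the weak and strict statements uniformly without the integrality detour; your process version is perhaps more explicit about the edge-accounting identity $m=\sum_i(\text{degree at deletion})$. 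Both are short and standard.
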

\begin{proof}
    If $d=0$, then the claim is trivial.  If $r=1$, then a vertex of maximum degree induces a subgraph with minimum degree at least $d$.  So assume $r\ge 2$ and $d>0$.
    
    Let $G_0$ be the smallest subgraph of $G$ with average degree at least $d$.  Let $n=|V(G_0)|$, and let $m=|E(G_0)|$.  Since $d>0$ and $r\ge 2$, it follows that $G_0$ has an edge and so $n\ge r \ge 2$.  Note that $G_0$ has average degree $rm/n$, with $rm/n \ge d$.  Let $k=\delta(G_0)$ and let $u$ be a vertex in $G_0$ with degree $k$.  Note that $G_0 - u$ has $m-k$ edges and $n-1$ vertices, so that $G_0-u$ has average degree $r(m-k)/(n-1)$.  By extremality of $G_0$, we have $r(m-k)/(n-1) < d \le rm/n$.  It follows that $k > m/n \ge d/r$.
\end{proof}

Our next lemma shows that $r$-digraphs without long paths must contain large subgraphs without closed walks.

\begin{lem}\label{lem:acyclic}
Let $G$ be a $n$-vertex $r$-digraph and suppose that $P^{(r)}_{s+1} \not\subseteq G$.  There is an induced subgraph $H$ such that $|V(H)|\ge (1-o(1))(n/(rs))^{1/(r-1)}$, where the $o(1)$ term depends only on $n$ and $r$ and, for each fixed $r$, approaches $0$ as $n\to \infty$, and every walk in $H$ has size at most $\max\{r-1,s\}$.  In particular, $H$ has no closed walks or cycles.
\end{lem}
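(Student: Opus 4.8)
The goal is to produce a set $X\subseteq V(G)$ with the property that $H:=G[X]$ contains no tight cycle; I will first explain why this suffices, and then how to build $X$. Suppose a tight walk has more than $s$ vertices. Then it repeats a vertex, say at positions $a<b$; since a window of an edge has distinct entries we must have $b-a\ge r$, and then $w_aw_{a+1}\cdots w_b$ is a closed tight walk. Now take a closed tight walk of minimum period $p$: its vertices within one period are pairwise distinct (a repetition would, as above, give a closed tight walk of strictly smaller period), so they span a copy of $\Cr{p}$ whose edges are the cyclic windows of the walk. Since $\PPr_p\subseteq\Cr{p}$ (the non-wrapping windows), the hypothesis $\PPr_{s+1}\not\subseteq G$ forces $p\le s$. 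Putting this together: if $G[X]$ contains no copy of $\Cr{t}$ for any $r\le t\le s$, then $G[X]$ has no closed tight walk, hence every tight walk in $G[X]$ has at most $s$ vertices, hence (a walk on fewer than $r$ vertices has at most $r-1$ of them) every tight walk in $G[X]$ has at most $\max\{r-1,s\}$ vertices, and in particular $G[X]$ has no closed walk or cycle. So it is enough to delete few vertices of $G$ so as to destroy every copy of $\Cr{t}$ with $r\le t\le s$.

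The heart of the matter is a bound on the number $N_t$ of copies of $\Cr{t}$ in $G$, the essential case being $t=r$; this is where the hypothesis $\PPr_{s+1}\not\subseteq G$ is genuinely used. A priori there are $\Theta(n^r)$ candidate copies of $\Cr{r}$, so we need to gain a factor of order $n/s$, and I would do this by an Erd\H{o}s--Gallai-type potential argument, aiming for $N_r\le(1+o(1))\,s\,n_{(r-1)}$ (up to an absolute constant). For an ordered $(r-1)$-tuple $T$ of distinct vertices, let $\phi(T)$ be the number of vertices of a longest tight path of $G$ whose $(r-1)$-suffix is $T$, so $r-1\le\phi(T)\le s$. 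Along an arc $T\to T'$ of the ``successor'' digraph on $(r-1)$-tuples (the arc coming from an edge $e$ with $(r-1)$-prefix $T$ and $(r-1)$-suffix $T'$), one has $\phi(T')\ge\phi(T)+1$ unless every longest tight path with suffix $T$ already contains the vertex appended by $e$. Exploiting this dichotomy lets one charge each copy of $\Cr{r}$ to a pair consisting of an $(r-1)$-tuple and a value in $\{r-1,\dots,s\}$, giving the desired bound. For $r<t\le s$ a much cruder bound on $N_t$ suffices, because such cycles contribute an extra small factor in the averaging below.

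Given these counts, I would extract $X$ by the deletion method (equivalently, by the standard lower bound on independence numbers of sparse $r$-uniform hypergraphs). Take $\mu\approx(n/(rs))^{1/(r-1)}$ and let $Y$ be a uniformly random $\mu$-subset of $V(G)$. For any copy $C$ of $\Cr{t}$ we have $\Pr[V(C)\subseteq Y]\le(\mu/n)^t$, so the expected number of copies of $\Cr{t}$ inside $Y$ is at most $N_t(\mu/n)^t$; since $\mu\le n$ the term $t=r$ controls the sum over $r\le t\le s$, and the bound $N_r=O(s\,n_{(r-1)})$ keeps the expected number of tight cycles in $G[Y]$ under control. With a suitable choice of $\mu=(1-o(1))(n/(rs))^{1/(r-1)}$, deleting one vertex from each tight cycle of $G[Y]$ leaves an induced subgraph $H$ with no tight cycle and $|V(H)|\ge(1-o(1))(n/(rs))^{1/(r-1)}$; the precise constant $(n/(rs))^{1/(r-1)}$ comes out of using the sharp forms of the count of copies of $\Cr{r}$ and of the hypergraph-independence inequality. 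By the first paragraph, such an $H$ has no walk on more than $\max\{r-1,s\}$ vertices and no closed walk or cycle, as required.

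The step I expect to be hardest is the counting bound $N_r=O(s\,n_{(r-1)})$, and especially making the constant sharp enough to recover the claimed $(n/(rs))^{1/(r-1)}$ rather than only $\Omega\!\big((n/(rs))^{1/(r-1)}\big)$: the absence of a long tight path has to be leveraged quite precisely, since without it there really are $\Theta(n^r)$ copies of $\Cr{r}$. The reduction to destroying tight cycles and the probabilistic extraction are then routine.
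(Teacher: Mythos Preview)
There is a genuine gap in your opening reduction. You assert that a walk $w_a\cdots w_b$ with $w_a=w_b$ and $w_a,\ldots,w_{b-1}$ pairwise distinct yields a copy of $\Cr{p}$ (with $p=b-a$) ``whose edges are the cyclic windows of the walk.'' For $r\ge 3$ this is false: the walk only certifies the $p-r+2$ windows $(w_i,\ldots,w_{i+r-1})$ for $a\le i\le b-r+1$, and none of the $r-2$ wraparound windows such as $(w_{b-1},w_a,w_{a+1},\ldots,w_{a+r-2})$ that a tight cycle requires. Concretely, with $r=3$, $V=\{1,2,3,4\}$, and $E=\{(1,2,3),(2,3,4),(3,4,1)\}$, there is no tight cycle of any length, the longest path has $s=4$ vertices, yet $12341$ is a walk of size $5>s$. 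So the implication ``$G[X]$ has no tight cycle $\Rightarrow$ every walk in $G[X]$ has size at most $s$'' already fails here, and destroying tight cycles does not prove the lemma as stated.

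The paper does not pass through cycles at all. It uses your potential $\phi(\sigma)=|V(P_\sigma)|$ not to count anything but to \emph{define} the host set: call an $r$-set \emph{good} if some ordering of it, say with $(r-1)$-prefix $\sigma$ and last entry $w$, has $w\in V(P_\sigma)$, and let $T$ be a maximum vertex set containing no good $r$-set. There are at most $\sum_\sigma |V(P_\sigma)|\le s\,n^{r-1}$ good $r$-sets, and since every $(|T|+1)$-set must contain one, the de Caen bound gives $|T|\ge(1-o(1))(n/(rs))^{1/(r-1)}$. In $H=G[T]$ every edge with $(r-1)$-prefix $\sigma$ and last entry $w$ has $w\notin V(P_\sigma)$, so traversing it strictly raises $\phi$; along any walk in $H$ the potential therefore climbs from at least $r-1$ and never exceeds $s$, yielding the walk bound (and hence the absence of closed walks) directly. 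Your cycle count $N_r\le s\,n_{(r-1)}$ is in fact correct---it is essentially \Cref{lem:cycle-linear}---but it is not the right instrument, and the step you flagged as hardest is not where the argument breaks.
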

\begin{proof}
For each ordered $(r-1)$-tuple of vertices $\sigma$, let $P_\sigma$ be a maximum path in $G$ whose last $r-1$ vertices are as listed in $\sigma$.  Let $s_\sigma = |V(P_\sigma)|$.  Note that for each $\sigma$, we have $r-1\le s_\sigma \le s$.  An ordered $r$-tuple $\pi$ is \emph{good} if $w\in V(P_\sigma)$ where the first $r-1$ entries in $\pi$ are $\sigma$ and the last entry in $\pi$ is $w$.  An unordered $r$-set $S$ is \emph{good} if at least one of its $r!$ permutations is good.

Let $m$ be the number of good (unordered) $r$-sets in $G$.  Since $\sum_{\sigma} |V(P_\sigma)| \ge m$, it follows that $|V(P_{\sigma})| \ge m/n^{r-1}$ for some $(r-1)$-tuple $\sigma$.  Therefore $m \le sn^{r-1}$.  Let $T$ be a maximum set of vertices in $G$ that contains no good $r$-set, and let $t=|T|$.  Since every $(t+1)$-set of vertices contains a good $r$-set, it follows from the de Caen bound~\cite{deCaen} on the Turán number of $K^{(r)}_{t+1}$ that $m\ge (1-o(1)) \frac{\binom{n}{r}}{\binom{t}{r-1}}$.  Therefore $(1-o(1))\frac{\binom{n}{r}}{\binom{t}{r-1}} \le m\le sn^{r-1}$, implying $(1-o(1))\frac{n_{(r)}}{rt_{(r-1)}} \le sn^{r-1}$.  It follows that $(1-o(1))\left(\frac{n^r}{rs}\right)\left(\frac{n_{(r)}}{n^r}\right)\le n^{r-1}t_{(r-1)} \le n^{r-1}t^{r-1}$, and so $t\ge \left[(1-o(1))\left(\frac{n_{(r)}}{n^r}\right)\left(\frac{n}{rs}\right)\right]^{1/(r-1)} = (1-o(1))\left(\frac{n}{rs}\right)^{1/(r-1)}$.

Let $H=G[T]$.  We show that every walk in $H$ has size at most $s$.  Note that if $\pi\in E(H)$, then the underlying set of $\pi$ is not good, and so $w\not\in V(P_\sigma)$ where $\pi$ consists of $\sigma$ followed by $w$.  Therefore $w$ extends a maximum path in $G$ ending in $\sigma$ to a longer one in $G$ ending in $\sigma'$, where $\sigma'$ is the last $r-1$ vertices in $\pi$.  It follows that $s_{\sigma'}\ge s_{\sigma} + 1$.  Let $W$ be a walk in $H$, with $W=u_1\ldots u_\ell$.  If $\ell\le r-1$, then the claim holds.  Otherwise $\ell\ge r$ and if $\sigma$ is the $(r-1)$-interval of $W$ ending at $u_j$, then $s_\sigma \ge j$.  In particular, when $\sigma$ ends at $u_\ell$, we have $\ell\le s_\sigma \le s$.
\end{proof}

For $n\ge r$, the \emph{cycle} $\Cr{n}$ is the $n$-vertex $r$-digraph whose vertices are arranged cyclically and whose edges are the intervals of size $r$.  Similarly to paths, all cycles considered in this paper are tight unless otherwise specified.  Using \Cref{prop:avg-to-min-deg}, we obtain long paths in an $r$-digraph with many copies of $\Cr{r}$.

\begin{lem}\label{lem:cycle-linear}
Let $G$ be an $n$-vertex $r$-digraph. If $G$ contains $m$ copies of the $r$-vertex cycle $\Cr{r}$, then $G$ contains a path on at least $m/n_{(r-1)} + (r-1)$ vertices.
\end{lem}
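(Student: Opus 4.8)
The plan is to bound the number $m$ of copies of $\Cr{r}$ by a sum over ordered $(r-1)$-tuples of vertices and then extract a long path by averaging. For each ordered $(r-1)$-tuple $\sigma$ of distinct vertices of $G$, fix a path $P_\sigma$ of maximum order among all paths in $G$ whose $(r-1)$-suffix is $\sigma$ (such a path exists: for instance the sequence $\sigma$ itself, a path with no edges), and write $\ell_\sigma=|V(P_\sigma)|\ge r-1$. I will exhibit an injection $\Phi$ from the set of copies of $\Cr{r}$ in $G$ to the set of pairs $(\sigma,a)$ where $\sigma$ is an ordered $(r-1)$-tuple of distinct vertices and $1\le a\le \ell_\sigma-(r-1)$. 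Since there are $n_{(r-1)}$ such tuples, this yields $m\le \sum_\sigma(\ell_\sigma-(r-1))\le n_{(r-1)}\bigl(\max_\sigma \ell_\sigma-(r-1)\bigr)$, so the path $P_\sigma$ achieving the maximum is a path in $G$ on at least $m/n_{(r-1)}+(r-1)$ vertices, which is the claim.

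To define $\Phi$, take a copy $C=\cycle{w_1,\ldots,w_r}$ of $\Cr{r}$ with subscripts read modulo $r$, and for each $j$ let $\tau_j=(w_j,w_{j+1},\ldots,w_{j+r-2})$ be the cyclically consecutive $(r-1)$-tuple beginning at $w_j$. Since $(w_{j-1},w_j,\ldots,w_{j+r-2})$ is an edge of $C$, it is a path on $r$ vertices with $(r-1)$-suffix $\tau_j$, so $\ell_{\tau_j}\ge r$ for every $j$. I claim that some index $j$ satisfies $w_{j+r-1}\in V(P_{\tau_j})$. If not, then for every $j$ the vertex $w_{j+r-1}$ lies outside $P_{\tau_j}$ while $(w_j,\ldots,w_{j+r-1})$ is an edge of $C$, so appending $w_{j+r-1}$ to $P_{\tau_j}$ produces a (legitimate) path on $\ell_{\tau_j}+1$ vertices with $(r-1)$-suffix $\tau_{j+1}$; this forces $\ell_{\tau_{j+1}}\ge \ell_{\tau_j}+1$ for all $j\in\ZZ/r$, which is impossible. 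Fix any such $j$, set $y=w_{j+r-1}$, and let $a$ be the position of $y$ in $P_{\tau_j}$. The last $r-1$ vertices of $P_{\tau_j}$ are $w_j,\ldots,w_{j+r-2}$, all distinct from $y$, so $a\le \ell_{\tau_j}-(r-1)$; put $\Phi(C)=(\tau_j,a)$.

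It remains to check that $\Phi$ is injective. From a pair $(\sigma,a)$ one recovers the vertex $y$ as the $a$-th vertex of the globally fixed path $P_\sigma$, hence the $r$-tuple $e=(\sigma,y)$, which by construction equals the edge $(w_j,w_{j+1},\ldots,w_{j+r-1})$ of $C$. The edge set of a copy of $\Cr{r}$ is exactly the set of cyclic rotations of any one of its edges, so the single $r$-tuple $e$ determines $C$; thus $\Phi(C)=\Phi(C')$ implies $C=C'$, regardless of which valid index $j$ was chosen for each copy. The step I expect to require the most care is the cyclic monotonicity argument that produces a usable index $j$, together with the verification that a single edge pins down the whole copy of $\Cr{r}$ — this is where the rigidity of $\Cr{r}$ (its edges being precisely the cyclic rotations of one another) is essential. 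Degenerate small uniformities cause no trouble: the argument works for all $r\ge 2$, and $r=1$ (where $\Cr{1}$ is a single marked vertex and $n_{(0)}=1$) is immediate.
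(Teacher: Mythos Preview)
Your argument is correct, but it takes a different route from the paper's. The paper builds an auxiliary $r$-uniform hypergraph $H$ on the set of ordered $(r-1)$-tuples of $V(G)$, letting each copy $C$ of $\Cr{r}$ contribute the edge $\psi(C)$ consisting of its $r$ cyclic $(r-1)$-intervals; then $|E(H)|=m$, $|V(H)|=n_{(r-1)}$, and an average-to-minimum-degree lemma produces a subgraph $H_0$ with minimum degree $t\ge m/n_{(r-1)}$. A longest path in $G$ whose $(r-1)$-suffix lies in $V(H_0)$ then has at least $t$ candidate extensions coming from the cycles incident to that suffix, all of which must already lie on the path, yielding $|V(P)|\ge (r-1)+t$.

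Your proof bypasses both the auxiliary hypergraph and the minimum-degree lemma: you charge each copy of $\Cr{r}$ directly to a pair $(\sigma,a)$ via the cyclic monotonicity of $\ell_{\tau_j}$ and recover the cycle from a single edge. This is more elementary and self-contained, and it is in the same spirit as the paper's \Cref{lem:acyclic}, which also works with the maximal-path function $\sigma\mapsto P_\sigma$. The paper's approach, on the other hand, isolates a reusable structural object (the auxiliary hypergraph on $(r-1)$-tuples) and makes the extremal step a clean application of a standard degree lemma. Both arguments give exactly the same bound.
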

\begin{proof}
    Let $\CC$ be the set of copies of $\Cr{r}$ in $G$.  For each $C\in\CC$, let $\psi(C)$ be the set of $(r-1)$-intervals in the cyclic ordering of $V(C)$.  Since each vertex in $C$ starts an $(r-1)$-interval, we have that $|\psi(C)| = r$.  Let $H$ be the auxiliary $r$-graph whose vertices are the $(r-1)$-tuples of distinct vertices in $V(G)$ with edge set $\{\psi(C)\st C\in\CC\}$.  Note that $|E(H)| = m$ and $|V(H)| = n_{(r-1)}$, and so $H$ has average degree $mr/n_{(r-1)}$.  Since $H$ is an $r$-graph with average degree $mr/n_{(r-1)}$, it follows that $H$ has a subgraph $H_0$ with minimum degree at least $m/n_{(r-1)}$.  Let $t$ be the minimum degree of $H_0$.
    
    Let $P$ be a longest path in $G$ with an $(r-1)$-suffix in $V(H_0)$, and let $(u_1,\ldots, u_{r-1})$ be this suffix.  Since $H_0$ has minimum degree $t$, it follows that there exist distinct vertices $v_1,\ldots,v_t\in V(G)$ such that $C_j \in\CC$ where $C_j=\cycle{u_1\ldots u_{r-1} v_j}$ and $\psi(C_j)\in E(H_0)$.  In particular, both $(u_1,\ldots,u_{r-1})$ and $(u_2,\ldots, u_{r-1}, v_j)$ are vertices in the edge $\psi(C_j)$ in $H_0$.  It follows that $v_j\in V(P)$, or else appending $v_j$ to $P$ gives a longer path ending at $(u_2,\ldots,u_{r-1},v_j)\in V(H_0)$.  Note that $|V(P)| \ge (r-1) + t$.
\end{proof}

We pause to discuss the sharpness of \Cref{lem:cycle-linear} in the case that $m=\binom{n}{r}$.  When $m=\binom{n}{r}$, \Cref{lem:cycle-linear} implies that $G$ has a path of size at least $n/(r!)$.  This is sharp up to a factor that is polynomial in $r$.

\begin{thm}
For each positive $n$ and $r$, there is an $n$-vertex $r$-digraph $G$ such that every $r$-set contains a copy of $\Cr{r}$ and every path in $G$ has size at most $r+ 2r^4(n/(r!))$.
\end{thm}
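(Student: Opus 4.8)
The plan is to exhibit an explicit $r$-digraph $G$ built from \emph{cyclic orders}: for each $r$-set $A$ choose a single cyclic order $c(A)$ on $A$ and declare all $r$ cyclic shifts of $c(A)$ to be edges of $G$. Then $G[A]$ contains a copy of $\Cr{r}$ for every $A$ automatically, so the entire problem reduces to keeping tight paths short. The basic observation is that a sequence $v_1\cdots v_s$ of distinct vertices is a tight path in such a $G$ exactly when, for every $j$, the cyclic order $c(\{v_{j+1},\dots,v_{j+r}\})$ is the one traced by the list $(v_{j+1},\dots,v_{j+r})$. So it suffices to design the function $c$ so that every such ``coherent'' sequence has at most $r+2r^4(n/r!)$ terms.

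The construction combines a block decomposition with a carefully chosen base construction inside each block. Fix a threshold $L=\Theta(r!)$ (to be tuned) and partition $[n]$ into $q=\ceil{n/L}$ consecutive blocks $B_1<\dots<B_q$ of almost-equal sizes, writing $\beta(v)$ for the block index of $v$. For an $r$-set $A$ inside a single block, take $c(A)$ from a base construction applied to that block; for an $r$-set $A$ meeting at least two blocks, take $c(A)$ to be a fixed \emph{crossing order} depending only on the block structure of $A$ and engineered to keep coherent sequences from revisiting earlier blocks. The key structural claim is a \emph{monotonicity lemma}: along any tight path $v_1\cdots v_s$ of $G$, the block indices $\beta(v_1),\dots,\beta(v_s)$ are non-decreasing. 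Granting this, the vertices $v_t$ with $\beta(v_t)=i$ form a contiguous run whose internal $r$-intervals are $r$-sets inside $B_i$, hence edges of $G[B_i]$; so that run is a tight path in the base construction on $B_i$ and has at most $T(|B_i|)$ vertices, where $T(m)$ is the worst-case tight-path length of the base construction on $m$ vertices. Summing over the blocks gives $s\le\sum_i T(|B_i|)$, and choosing $L$ so that on the relevant scale $T(m)\le 2r^4(m/r!)$ yields $s\le 2r^4(n/r!)\le r+2r^4(n/r!)$; the additive ``$r$'' and the polynomial-in-$r$ slack in the statement are exactly what make this bookkeeping (and the small cases $n\le L$) go through.

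Two steps remain, and I expect both to carry genuine content. First, the monotonicity lemma: as the window slides from $\{v_{j+1},\dots,v_{j+r}\}$ to $\{v_{j+2},\dots,v_{j+r+1}\}$, the departing vertex $v_{j+1}$ is the cyclic successor, in $c(\{v_{j+1},\dots,v_{j+r}\})$, of the last list-entry $v_{j+r}$; the crossing order must be chosen so that (i) this departing vertex always lies in the lowest block currently met by the window, forcing the lowest block index present in the window not to decrease, and more importantly so that consecutive entries of every window-list are block-monotone (the single ``wrap'' of each cyclic order being pushed past the end of the list along any tight path), and (ii) the crossing order is never the plain increasing cyclic order on $A$ — otherwise the sequence $1,2,\dots,n$ would itself be a tight path. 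Establishing these properties consistently while still assigning every $r$-set a genuine cyclic order is the main obstacle; I would analyze it by the ``which extreme element sits where'' technique of \Cref{prop:max-elt-travel}, handling the $r$ rotations of $c(A)$ case by case. Second, the base construction on $\Theta(r!)$ vertices: one needs every $r$-set to span a $\Cr{r}$ while all tight paths have only $O(r^4)$ vertices — a polynomial-in-$r$ factor above the floor of $\Theta(r)$ that \Cref{lem:cycle-linear} already forces on such a vertex set — and I expect this to come from a de Bruijn / pattern-shift-graph style choice of $c$, so that coherent sequences correspond to walks in an auxiliary digraph with no long walks, in the spirit of the use of $\PSG_r$ earlier in the paper.
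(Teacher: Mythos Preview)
Your monotonicity lemma, as literally stated, is impossible. A single edge is already a tight path of size $r$, and for any crossing $r$-set $A$ not all $r$ rotations of $c(A)$ can have weakly increasing block indices (if they did, cycling through them would force all vertices of $A$ into one block). So some edge of $G$---hence some tight path---has $\beta(v_1),\dots,\beta(v_r)$ non-monotone, whatever crossing orders you choose. Weakening to ``the minimum block present in the window is non-decreasing'' is more plausible, but then the vertices in a fixed block need not form a contiguous run, and your bound $s\le\sum_i T(|B_i|)$ no longer follows from the argument you give. More seriously, your second ingredient---a base construction on $\Theta(r!)$ vertices with all tight paths of size $O(r^4)$---is essentially the whole theorem: a ``pattern-shift-graph style choice of $c$'' that achieves this would already work for arbitrary $n$, making the block scaffolding superfluous.

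The paper's proof is exactly that global pattern-shift construction, with no monotonicity and no recursive step. Contract each shift cycle of $\PSG_r$ to a single vertex to obtain a digraph $F$ on $(r-1)!$ vertices; since each vertex of $\PSG_r$ has only $r-1$ out-neighbours outside its own shift cycle, the underlying graph of $F$ has maximum degree below $2r^2$, so $F$ has an independent set $I=\{C_1,\dots,C_t\}$ with $t\ge (r-1)!/(2r^2)$. Now equipartition $[n]$ into parts $X_1,\dots,X_t$ and, for each $r$-set $S$, install on $S$ the copy of $\Cr{r}$ whose edges pattern-match those of $C_{\psi(S)}$, where $\psi(S)$ indexes the part containing $\min S$. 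Along any tight path, consecutive $r$-intervals $e,e'$ satisfy $\canptn(e)\canptn(e')\in E(\PSG_r)$, which would give an edge $C_{\psi(e)}\to C_{\psi(e')}$ in $F$; independence of $I$ forces $\psi(e)=\psi(e')$. Thus $\psi$ is \emph{constant} (not merely monotone) along the path, every $r$-interval has its minimum in one fixed part $X_j$, and the path has at most $r|X_j|\le r\lceil n/t\rceil\le r+2r^4 n/r!$ vertices. The parts here are indexed by cycle patterns rather than by position in $[n]$, and the key step is a one-line invariance, not a block-by-block monotonicity.
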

\begin{proof}
Let $F$ be the digraph whose vertices are the copies of $\Cr{r}$ in the complete $r$-digraph on $[r]$, with an edge from $C$ to $C'$ if and only if $ee'\in E(\PSG_r)$ for some $e\in E(C)$ and some $e'\in E(C')$.  Note that $F$ is the digraph obtained from $\PSG_r$ by contracting the equivalence classes of cyclic shifts and discarding loops and parallel edges.  Since the equivalence classes are disjoint and have size $r$, it follows that $|V(F)| = |V(\PSG_r)|/r = (r-1)!$.  Let $x\in \PSG_r$.  Note that $N^+(x)$ consists of the cyclic shift $x'$ of $x$ and $r-1$ other outneighbors.  When the $r$ cyclic shifts of $x$ are contracted to form a vertex $C$ in $F$, each contributes at most $r-1$ to the outdegree of $C$.  It follows that $d^+_F(C)\le r(r-1)$.  Similarly, $d^-_F(C) \le r(r-1)$.  Let $F'$ be the underlying graph of $F$ and note that $\Delta(F')\le 2r(r-1)<2r^2$.  It follows that $F'$ has an independent set $I$ with $|I|\ge |V(F')| / (1+\Delta(F)) \ge (r-1)!/(2r^2)$.

Let $G$ be the $r$-digraph on $[n]$ as follows.  Let $t=|I|$, let $I = \{C_1,\ldots,C_t\}$ and let $\{X_1,\ldots,X_t\}$ be an equipartition of $[n]$ into $t$ parts.  For each $S\in\binom{[n]}{r}$, let $\psi(S)$ be the index $j$ of the part $X_j$ containing $\min S$.  For convenience, we extend $\psi$ in the natural way to $r$-tuples of distinct vertices.  For each $S$, we include in $G$ the $r$-cycle on $S$ whose edges pattern-match the edges in $C_{\psi(S)}$, so that $(v_1,\ldots,v_r)\in E(G)$ if and only if $\canptn(v_1,\ldots,v_r)\in E(C_j)$ where $\canptn$ is the canonical pattern function and $j=\psi(v_1,\ldots,v_r)$.

Suppose that $u_1\ldots u_s$ is a path in $G$.  We claim that $\psi$ is constant on the $r$-subintervals of $u_1\ldots u_s$.  Let $e$ and $e'$ be consecutive $r$-subintervals of $u_1,\ldots,u_s$ with $e$ before $e'$.  Since the $(r-1)$-suffix of $e$ equals the $(r-1)$-prefix of $e'$, it follows that $\canptn(e) \canptn(e')\in E(\PSG_r)$.  Also, since $e,e'\in E(G)$, it follows that $\canptn(e)\in E(C_j)$ and $\canptn(e')\in E(C_{j'})$ where $j=\psi(e)$ and $j'=\psi(e')$.  If $j\ne j'$, then $F$ has an edge from $C_j$ to $C_{j'}$, but this contradicts that $C_j$ and $C_{j'}$ are both members of the independent set $I$ in the underlying graph $F'$.  It follows that $\psi$ is constant on the $r$-subintervals of $u_1\ldots u_s$.  

Let $j$ be the common value that $\psi$ assigns to each $r$-subinterval of $u_1\ldots u_s$.  It follows that each $r$-subinterval of $u_1\ldots u_s$ contains at least one vertex in $X_j$.  Therefore $s\le r|X_j| \le r\ceil{n/|I|} \le r + 2r^4\cdot n/(r!)$.  
\end{proof}

Our next theorem establishes an upper bound on the linear path threshold.

\begin{thm}\label{thm:linear-thresh}
If $k = (1-\frac{1}{r} + \frac{1}{r!})r!$, then $f(n,r,k) \ge n/r!$.  It follows that the linear path threshold is at most $(1-\frac{1}{r} + \frac{1}{r!})r!$
\end{thm}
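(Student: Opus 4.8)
The plan is to show that when $k = (1-\frac1r+\frac1{r!})r! = r! - (r-1)! + 1$, every $r$-set of vertices in an $(r,k)$-tournament $G$ already spans a copy of the $r$-vertex tight cycle $\Cr{r}$; the resulting abundance of copies of $\Cr{r}$ then yields a path of linear size via \Cref{lem:cycle-linear}.

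To carry this out, fix an $r$-set $A$ of vertices and record, as in the proof of \Cref{thm:const-path-thresh}, the set $f(A) = \{\canptn(e)\st e\in E(G)\text{ is an ordering of }A\}$ of canonical patterns realized on $A$. Since distinct orderings of $A$ have distinct canonical patterns and $G$ is an $(r,k)$-tournament, $|f(A)| = k$, so its complement $M(A) = V(\PSG_r)\setminus f(A)$ has size $r! - k = (r-1)! - 1$. The key observation is that a copy of $\Cr{r}$ on $A$ is precisely a cyclic ordering of $A$ all of whose $r$ cyclic shifts (the $r$-intervals) are edges of $G$, and the canonical patterns of those $r$ shifts form exactly one shift cycle of $\PSG_r$ (an equivalence class of the cyclic-shift relation; recall there are $(r-1)!$ of them, they are pairwise disjoint, and each has size $r$). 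Hence $A$ spans a copy of $\Cr{r}$ if and only if $f(A)$ contains some shift cycle in full. Since the $(r-1)!$ shift cycles are pairwise disjoint and $|M(A)| = (r-1)! - 1 < (r-1)!$, the set $M(A)$ cannot meet every shift cycle, so some shift cycle lies entirely inside $f(A)$ and $A$ spans a copy of $\Cr{r}$.

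Consequently $G$ contains at least $\binom{n}{r}$ copies of $\Cr{r}$, and \Cref{lem:cycle-linear} with $m=\binom{n}{r}$ produces a path on at least $\binom{n}{r}/n_{(r-1)} + (r-1) = \frac{n-r+1}{r!} + (r-1) \ge \frac{n}{r!}$ vertices, proving $f(n,r,k)\ge n/r!$; the stated bound on the linear path threshold is then immediate. I do not expect a serious obstacle here: the step needing the most care is simply the correspondence between copies of $\Cr{r}$ on a fixed $r$-set and shift cycles of $\PSG_r$, matched against the numerical coincidence that $r! - k = (r-1)! - 1$ falls just below the number $(r-1)!$ of shift cycles; the closing arithmetic is routine, and small values of $n$ are handled trivially since the statement is then vacuous or follows from a single edge.
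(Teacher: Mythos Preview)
Your proof is correct and follows essentially the same approach as the paper: both arguments observe that the $r!$ orderings of an $r$-set partition into $(r-1)!$ cyclic-shift classes (each giving a copy of $\Cr{r}$), so omitting only $(r-1)!-1$ orderings leaves at least one such cycle intact, and then both invoke \Cref{lem:cycle-linear} with $m=\binom{n}{r}$ and perform the same arithmetic. Your detour through canonical patterns and shift cycles of $\PSG_r$ is a faithful relabeling of this pigeonhole rather than a different idea.
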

\begin{proof}
    Let $G$ be an $n$-vertex $(r,k)$-tournament.  Since each $r$-set omits fewer than $(r-1)!$ edges, it follows that each $r$-set has a copy of $\Cr{r}$.  Applying \Cref{lem:cycle-linear} with $m=\binom{n}{r}$, we obtain a path $P$ in $G$ such that $|V(P)| \ge \frac{\binom{n}{r}}{n_{(r-1)}} + r - 1 = \frac{n-r+1}{r!}+r-1 \ge n/r!$.
\end{proof}

\subsection{Spanning Path Threshold}

Using the Lovász Local Lemma~\cite{LLM} (see~\cite{alon2016probabilistic} for a textbook presentation), it is not difficult to show that if $k\ge (1-\frac{1}{e(2r-1)})r!$, then $f(n,r,k)=n$.  In this section, we relax the hypothesis to $k>(1-\frac{1}{4(r-1)})r!$. 

\begin{prop}\label{prop:large_fraction}
Let $X$ be a random variable such that $X\le M$.  If $\alpha < M$, then $\Pr(X\ge \alpha) \ge (\E(X) - \alpha)/(M-\alpha)$.
\end{prop}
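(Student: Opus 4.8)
The plan is to prove this ``reverse Markov'' inequality by splitting the expectation of $X$ according to whether the event $\{X \ge \alpha\}$ occurs. Write $p = \Pr(X \ge \alpha)$. On the event $\{X \ge \alpha\}$ we use the hypothesis $X \le M$, and on its complement we use $X < \alpha$; combining these crude bounds gives $\E(X) \le Mp + \alpha(1-p) = \alpha + (M-\alpha)p$. Rearranging yields $(M-\alpha)p \ge \E(X) - \alpha$, and dividing by $M - \alpha$, which is positive precisely because $\alpha < M$, produces $p \ge (\E(X)-\alpha)/(M-\alpha)$, as desired.

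An equivalent route, which I would mention in passing, is to apply the ordinary Markov inequality to the nonnegative random variable $Y = M - X$: since $\{X \le \alpha\} = \{Y \ge M-\alpha\}$ and $\E(Y) = M - \E(X)$, Markov gives $\Pr(X \le \alpha) \le (M-\E(X))/(M-\alpha)$, and taking complements gives $\Pr(X > \alpha) \ge (\E(X)-\alpha)/(M-\alpha)$; the claimed bound then follows from $\Pr(X \ge \alpha) \ge \Pr(X > \alpha)$.

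There is essentially no obstacle here: the only points worth a word of care are that $X$ is not assumed to be nonnegative (which is why one passes to $M - X$, or equivalently why the split above bounds $X$ by $M$ and by $\alpha$ rather than by $0$), that the hypothesis $\alpha < M$ is exactly what allows division by $M - \alpha$ without reversing the inequality, and that when $\E(X) < \alpha$ the right-hand side is negative so the statement is vacuously true. I would present the one-line splitting computation as the proof.
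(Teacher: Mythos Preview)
Your proof is correct. The paper's own argument is precisely your second route—apply Markov to $M-X$ and take complements—so your proposal covers the paper's approach verbatim, with your splitting argument being an equally valid (and arguably more self-contained) alternative.
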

\begin{proof}
    Applying Markov's inequality to the non-negative random variable $M-X$, we have $\Pr(X\ge \alpha) = \Pr(M-X \le M - \alpha) = 1-\Pr(M-X > M-\alpha) \ge 1 - \frac{\E(M-X)}{M-\alpha} = (\E(X) - \alpha)/(M-\alpha)$.
\end{proof}

\begin{prop}\label{prop:many_large_out_degree}
    Let $G$ be an $n$-vertex $r$-digraph with $n \ge r$, and let $t=\min\{n,2r-2\}$.  If the edge density $|E(G)|/n_{(r)}$ of $G$ is greater than $1-\frac{1}{4(r-1)}$, then more than half of the $(r-1)$-tuples of distinct vertices of $G$ begin more than $\frac{1}{2}n_{(t)}/n_{(r-1)}$ paths of size $t$. 
\end{prop}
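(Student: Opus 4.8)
The plan is to bound from below the total number of ordered $t$-tuples of distinct vertices that form tight paths, using a union bound over the $r$-intervals, and then to localize this count to $(r-1)$-prefixes by an averaging argument (which is exactly the content of \Cref{prop:large_fraction}).

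First I would record the bookkeeping. The hypothesis mentions $\tfrac{1}{4(r-1)}$, so $r\ge 2$, and then $t=\min\{n,2r-2\}$ satisfies $r\le t\le 2r-2$; in particular a $t$-tuple has exactly $t-r+1$ many $r$-intervals, and $t-r+1\le r-1$. Let $q=1-|E(G)|/n_{(r)}$, so the hypothesis says $q<\tfrac{1}{4(r-1)}$. Now choose a uniformly random injection $(v_1,\dots,v_t)$ of $[t]$ into $V(G)$. The key observation is that for each $i$ the $r$-interval $(v_i,\dots,v_{i+r-1})$ is, marginally, a uniformly random injection of an $r$-set into $V(G)$, so $\Pr[(v_i,\dots,v_{i+r-1})\notin E(G)]=q$. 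A union bound over the at most $r-1$ intervals gives
\[
\Pr\big[(v_1,\dots,v_t)\text{ is not a tight path}\big]\le (t-r+1)\,q<(r-1)\cdot\tfrac{1}{4(r-1)}=\tfrac14,
\]
so $(v_1,\dots,v_t)$ is a tight path with probability more than $\tfrac34$. Equivalently, writing $N$ for the number of ordered $t$-tuples of distinct vertices that form a tight path, we obtain $N>\tfrac34\,n_{(t)}$.

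Next I would pass to prefixes. For an ordered $(r-1)$-tuple $\sigma$ of distinct vertices, let $p(\sigma)$ be the number of tight paths of size $t$ whose $(r-1)$-prefix is $\sigma$. Every $t$-tuple counted by $N$ has a well-defined $(r-1)$-prefix, so $\sum_\sigma p(\sigma)=N$ (the sum over the $n_{(r-1)}$ prefixes), and trivially $0\le p(\sigma)\le M$ where $M:=n_{(t)}/n_{(r-1)}$ is the number of extensions of $\sigma$ to a $t$-tuple of distinct vertices, so that $n_{(t)}=n_{(r-1)}M$. If $a$ is the number of $\sigma$ with $p(\sigma)>\tfrac12 M$, then bounding the $a$ large terms by $M$ and the remaining $n_{(r-1)}-a$ terms by $\tfrac12 M$ gives $\tfrac34\,n_{(r-1)}M<N\le aM+(n_{(r-1)}-a)\tfrac12 M$, which rearranges to $a>\tfrac12\,n_{(r-1)}$. (This is \Cref{prop:large_fraction} applied to $X=p(\sigma)$ for uniformly random $\sigma$, with $\alpha$ just above $\tfrac12 M$, the strict inequality $\E(X)>\tfrac34 M$ propagating to the conclusion.) Hence more than half of the $(r-1)$-tuples begin more than $\tfrac12\,n_{(t)}/n_{(r-1)}$ paths of size $t$.

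I do not expect a genuine obstacle; the mathematical content is the union bound together with the calibration of the constant $\tfrac{1}{4(r-1)}$. The points to watch are the degenerate regime $t=n<2r-2$, where one still has $t-r+1\le r-1$ intervals so the argument is unchanged, and keeping every inequality strict so that the conclusion genuinely reads ``more than half'' and ``more than $\tfrac12 n_{(t)}/n_{(r-1)}$'' rather than the non-strict versions — the factor of $2$ of slack built into the hypothesis, i.e.\ aiming for path-probability $>\tfrac34$ rather than merely $>\tfrac12$, is precisely what makes the averaging step work.
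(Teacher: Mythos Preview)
Your proposal is correct and essentially identical to the paper's proof: both count tight $t$-paths by a union bound over the at most $r-1$ many $r$-intervals (the paper phrases this as counting $t$-tuples containing a non-edge as a substring, you phrase it probabilistically), arriving at more than $\tfrac34 n_{(t)}$ paths, and then both apply the averaging step encoded in \Cref{prop:large_fraction} to the random variable ``number of $t$-paths with prefix $\sigma$.'' The only cosmetic difference is that you unwind the Markov-type inequality by hand, whereas the paper cites \Cref{prop:large_fraction} explicitly with $\alpha=\tfrac12 M+\varepsilon$.
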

\begin{proof}
    Every $r$-tuple of distinct vertices in $G$ appears consecutively in $\binom{n-r}{t-r}(t-r+1)!$ of the $t$-tuples of distinct vertices.  Since $G$ has edge density more than $1-\frac{1}{4(r-1)}$, it follows that $G$ has fewer than $\frac{1}{4(r-1)}n_{(r)}$ non-edges, and so fewer than $\frac{1}{4(r-1)}n_{(r)}\binom{n-r}{t-r}(t-r+1)!$ of the $t$-tuples of distinct vertices in $G$ have a non-edge as a substring.  Since $t\le 2r-2$, this is at most $\frac{1}{4}n_{(t)}$. So $G$ has more than $\frac{3}{4}n_{(t)}$ paths of size $t$.

    Let $\sigma$ be an $(r-1)$-tuple of distinct vertices in $G$ chosen uniformly at random, let $X$ be the number of paths in $G$ of size $t$ that begin with $\sigma$, and let $M=(n-(r-1))_{(t-(r-1))} = n_{(t)}/n_{(r-1)}$. Note that $X\le M$ for each $\sigma$, and directly applying the definition of expectation gives $\E(X) = \frac{1}{n_{(r-1)}} \sum_{\sigma} X(\sigma) > \frac{1}{n_{(r-1)}}\cdot \frac{3}{4}n_{(t)} = \frac{3}{4} M$. Let $\varepsilon = \E(X)-\frac{3}{4}M$. By \Cref{prop:large_fraction}, we have $\Pr(X > M/2) \ge \Pr(X \ge M/2+\varepsilon) \ge \frac{\E(X) - (M/2+\varepsilon)}{M-(M/2+\varepsilon)} = \frac{M/4}{M/2 -\vep}> 1/2$.
\end{proof}

\begin{thm}\label{thm:hamthrsh}
Let $n\ge r\ge 2$.  If $k> (1-\frac{1}{4(r-1)})r!$ then  $f(n,r,k) = n$.
\end{thm}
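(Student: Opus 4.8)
The plan is to argue by contradiction, extracting a longest path and using the abundance of short connectors supplied by \Cref{prop:many_large_out_degree}. First I would observe that the hypothesis $k>(1-\frac{1}{4(r-1)})r!$ says exactly that every $(r,k)$-tournament has edge density $k/r!>1-\frac{1}{4(r-1)}$, so \Cref{prop:many_large_out_degree} applies both to $G$ and to the reverse digraph $G^{\mathrm{rev}}$ (which is again an $(r,k)$-tournament). Note also that for $r\in\{2,3\}$ the hypothesis forces $k=r!$, so $G$ is complete and the conclusion is immediate; hence we may assume $r\ge 4$. Next dispose of the small range $n\le 2r-2$: here $t=\min\{n,2r-2\}=n$, so \Cref{prop:many_large_out_degree} gives that more than half of the $(r-1)$-tuples begin more than $\tfrac12\, n_{(n)}/n_{(r-1)}=\tfrac12(n-r+1)!\ge\tfrac12$ paths of size $n$; being a positive integer, this count is at least $1$, producing a spanning path directly.

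Now assume $n\ge 2r-1$, so $t=2r-2$, and suppose for contradiction that $G$ has no spanning path. Let $P=v_1\cdots v_s$ be a longest path, $s<n$, and $W=V(G)\setminus V(P)$, so $|W|=n-s\ge 1$. Maximality of $P$ says that no vertex of $W$ can be spliced into $P$ at any of the $s+1$ slots without creating a non-edge $r$-window; I would also apply this to every path obtained from $P$ by a \emph{tail-swap} rotation (replace the last vertex by an outside vertex whenever the preceding $(r-1)$-window permits), which yields a family of maximal paths on a common vertex set with an enlarged set of end-windows. \Cref{prop:many_large_out_degree} (together with its reverse-digraph form) guarantees that more than half of all $(r-1)$-tuples begin more than $\tfrac12(n-r+1)_{(r-1)}$ paths of size $2r-2$, and more than half are the terminus of that many; so if $|W|$ is a non-negligible fraction of $n$, a counting comparison shows that after a suitable tail-swap the suffix window of $P$ connects, via a size-$(2r-2)$ path, to some $(r-1)$-tuple lying entirely inside $W$, and appending that tuple lengthens $P$ — a contradiction.

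The remaining regime is $|W|$ small. There I would switch to a direct estimate: inserting a fixed $v\in W$ into $P$ has $s+1$ candidate positions, and each blocked position is witnessed by a distinct ``special'' non-edge of the form ($(r-1)$-window of $P$, with $v$ inserted at one of $r$ places). Since every $r$-set omits fewer than $r!/(4(r-1))$ edges, and each $(r-1)$-window of $P$ gives rise to only $r$ such orderings, the total number of special non-edges is too small to block all $s+1$ positions once one also feeds in the extra size-$(2r-2)$ connectors through the windows of $P$ that \Cref{prop:many_large_out_degree} certifies as rich; this contradicts maximality of $P$ and completes the proof.

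The hard part is the rotation machinery for \emph{directed} tight paths: unlike in ordinary graphs one cannot reverse a segment of a tight path, so the only cheap local move is the tail-swap, and one must work to make the reachable set of endpoints large enough to force the required number of non-edges. This is most delicate precisely in the near-spanning regime, where non-extendability alone yields only $O(n)$ forced non-edges while the density bound permits $\Theta(n^r)$ of them; closing that gap is exactly where the quantitative strength of \Cref{prop:many_large_out_degree} — \emph{many} size-$(2r-2)$ connectors, not merely one — must be exploited.
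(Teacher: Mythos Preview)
The proposal has a genuine gap. Your plan hinges on applying \Cref{prop:many_large_out_degree} to $G$ (and $G^{\mathrm{rev}}$), but this only tells you that \emph{most} $(r-1)$-tuples are rich, not that the specific suffix of your longest path $P$ is. The tail-swap you describe does not repair this: swapping $v_s$ for some $w\in W$ changes the vertex set of the path, so you do not get ``a family of maximal paths on a common vertex set'' --- you get paths on different vertex sets, each with a single fixed suffix, and there is no rotation mechanism for directed tight paths that enlarges the set of reachable end-windows the way P\'osa rotation does for ordinary graphs. Even granting a rich suffix, the size-$(2r-2)$ paths it starts are paths in $G$, and nothing prevents their last $r-1$ vertices from lying in $V(P)$ rather than $W$; when $|W|$ is only a constant fraction of $n$ this cannot be excluded by counting. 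In the small-$|W|$ regime your insertion count also fails for $r\ge 5$: each $r$-set may omit up to $\lfloor r!/(4(r-1))\rfloor -1$ edges, which for $r\ge 5$ already exceeds $r$, so every one of the $r$ ``special'' orderings attached to a given $(r-1)$-window of $P$ can be a non-edge, giving $\Theta(rs)$ available blockers against the $s+1$ positions you need blocked.

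The paper avoids all of this by changing what is being grown. Rather than a longest path, it grows a maximal \emph{flexible} path --- one whose $(r-1)$-suffix extends to more than half of the $(r-1)$-tuples of $G-V(P)$. A flexible path of size~$1$ exists by averaging. For the growth step one applies \Cref{prop:many_large_out_degree} not to $G$ but to $G'=G-V(P)$: this says more than half of the $(r-1)$-tuples of $G'$ start many paths of size $t=\min\{|V(G')|,2r-2\}$ \emph{entirely inside} $G'$. Pigeonholing this against the flexibility of $P$ yields an $(r-1)$-tuple in $G'$ that both extends $P$ and is itself rich in $G'$, so when $|V(G')|\ge 2r-2$ appending it gives a strictly longer flexible path; when $r\le |V(G')|<2r-2$ the same proposition with $t=|V(G')|$ produces a spanning path of $G'$ that extends $P$. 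The idea you are missing is to carry flexibility as an invariant and to apply \Cref{prop:many_large_out_degree} to the complement of the current path rather than to $G$ itself.
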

\begin{proof}
    Let $G$ be an $n$-vertex $(r,k)$-tournament with $k>(1-\frac{1}{4(r-1)})r!$.  We show that $G$ has a spanning path.  A path $P$ is \emph{flexible} if more than half of the $(r-1)$-tuples of distinct vertices in $V(G)-V(P)$ form a path when appended to $P$.  We show that $G$ has a flexible path of size $1$.  If $u$ is an $r$-tuple of distinct vertices chosen uniformly at random and $u=(u_1,\ldots,u_r)$, then $k/r! = \Pr(u\in E(G)) = \sum_{v\in V(G)} \Pr(u\in E(G) | u_1=v)\cdot \Pr(u_1 = v)$.  It follows that some vertex $v\in V(G)$ begins an edge with at least a $k/r!$ fraction of the $(r-1)$-tuples of distinct vertices in $G-v$.  Since $k/r! > 1/2$, it follows that such a vertex forms a flexible path of size $1$.

    Let $P$ be a maximal flexible path.  Let $G'=G-V(P)$, let $n'=|V(G')|$, and note that $n'\ge r-1$ since flexibility requires that at least one $(r-1)$-tuple of distinct vertices in $G'$ extends $P$.  If $n'=r-1$, then $P$ extends to a spanning path in $G$.  So we may assume $n'\ge r$.  Let $t=\min(n',2r-2)$.  Since $G'$ is an $(r,k)$-tournament with $k>(1-\frac{1}{4(r-1)})r!$, \Cref{prop:many_large_out_degree} implies that the set $A$ of $(r-1)$-tuples of distinct vertices in $G'$ which begin more than $\frac{1}{2}n'_{(t)}/n'_{(r-1)}$ paths of size $t$ has size more than $\frac{1}{2}n'_{(r-1)}$.
    
    Suppose that $n'\ge 2r-2$, and note that $t=2r-2$ in this case.  Since $P$ is flexible, the set $B$ of $(r-1)$-tuples of distinct vertices in $G'$ which form paths when appended to $P$ has size more than $\frac{1}{2}n'_{(r-1)}$.  By the pigeonhole principle, there exists $(x_1,\ldots,x_{r-1})\in A\cap B$.  Let $Q$ be the path in $G$ obtained by appending $x_1\ldots x_{r-1}$ to $P$, let $G'' = G-V(Q)$, and let $n'' = V(G'')$.  Since $(x_1,\ldots,x_{r-1})\in A$, there are more than $\frac{1}{2}n'_{(t)}/n'_{(r-1)}$ paths in $G'$ of size $t$ that have $x_1\ldots x_{r-1}$ as a prefix.  Since $n'_{(t)}/n'_{(r-1)} = (n'-(r-1))_{(t-(r-1))} = (n'-(r-1))_{(r-1)} = n''_{(r-1)}$, it follows that $Q$ is a larger flexible path, contradicting the maximality of $P$. 

     Therefore $r\le n' < 2r-2$.  In this case $t=n'$ and so \Cref{prop:many_large_out_degree} gives us that over half the $(r-1)$-tuples of distinct vertices of $G'$ extend to spanning paths of $G'$.  Since $P$ is flexible, at least one of these $(r-1)$-tuples extends $P$. So $P$ extends to a spanning path of $G$.
\end{proof}

\section{Small Uniformity}\label{sec:small-uniform}

In this section, we present arguments that apply to $(r,k)$-tournaments when $r$ is small.  Most of our work here is restricted to the case $r=3$ with a brief discussion of the known bounds on thresholds when $r\in\{4,5\}$.  We begin with the case $r=3$.  For readability, we write $uvw$ instead of $(u,v,w)$ to refer to an edge in a $3$-digraph.

\subsection{Paths in $3$-uniform tournaments}\label{ssec:3-uniform-tournaments}

When $r=3$, \Cref{prop:trivthrsh} implies $f(n,3,2) \le 3$, \Cref{thm:hamthrsh} gives the trivial result that $f(n,3,6) = n$, and \Cref{thm:linear-thresh} implies that $f(n,3,5)=\Theta(n)$.  In this section, we obtain improved results when $r=3$ and $k\in\{3,4,5\}$.  

\subsubsection{Paths in $(3,5)$-tournaments}

We show that every $(3,5)$-tournament has a spanning path.

\begin{prop}\label{prop:35-spanning}
$f(n,3,5)=n$.
\end{prop}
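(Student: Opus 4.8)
The plan is to show that every $(3,5)$-tournament $G$ on $n$ vertices has a spanning tight path by a greedy extension argument, exploiting the fact that a $(3,5)$-tournament omits exactly one of the six orderings of each triple. First I would reformulate the data: for each triple $\{a,b,c\}$ there is a unique ``forbidden'' ordering, so $G$ is encoded by a function assigning to each triple one of the $3!=6$ canonical patterns (the one \emph{missing}). Equivalently, via the canonical pattern map $\canptn$, each triple $A$ is assigned a single missing vertex in $\PSG_3$, and all five other patterns are edges. The key structural point is that at each step of building a path, the set of $2$-suffixes from which we \emph{cannot} legally extend by a given new vertex is extremely restricted, because only one ordering per triple is absent.

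The main argument would maintain a longest tight path $P = v_1\cdots v_s$ in $G$ and suppose for contradiction that some vertex $w\notin V(P)$ is unused. First I would observe that $w$ cannot be appendable at either end: the triple $\{v_{s-1},v_s,w\}$ must have its unique missing ordering equal to $(v_{s-1},v_s,w)$, and symmetrically $\{w,v_1,v_2\}$ has missing ordering $(w,v_1,v_2)$. Now I would try to \emph{reroute}: for an interior index $i$, consider inserting $w$ between $v_i$ and $v_{i+1}$, which requires both $(v_{i-1},v_i,w)$ and $(v_i,w,v_{i+1})$ and $(w,v_{i+1},v_{i+2})$ to be edges. Since each triple forbids only one ordering, the obstruction at each position is a single bad pattern, and I would argue by a counting/pigeonhole argument over the $s-1$ internal positions (and possibly over swaps exchanging $w$ with some $v_i$ and reattaching $v_i$ elsewhere, using $f(n,3,5)\ge\Omega(n)$ from \Cref{thm:linear-thresh} only as a sanity check, not as input) that not all positions can simultaneously be blocked. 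A cleaner route, which I would pursue first, is this: look at the two triples $\{v_{s-1},v_s,w\}$ and $\{v_s,w,v_1\}$-type configurations together with the triple $\{v_{s-2},v_{s-1},v_s\}$ (which, being an edge of $P$, pins down the pattern of $(v_{s-2},v_{s-1},v_s)$) and derive a contradiction from the interaction of the forbidden patterns, much as in the proof of \Cref{prop:trivthrsh} where a single triple cannot simultaneously satisfy $v_2>v_3$ and $v_2<v_3$.

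The hardest part will be handling the case where the only unused vertex $w$ is ``compatible nowhere'' as an insertion — I expect one must combine appending and a local swap (replace $v_i$ by $w$ and reinsert $v_i$ at the end), showing that the finitely many forbidden orderings among the $O(s)$ triples involved cannot cover all $O(s)$ candidate moves. I would organize this as: (1) if $P$ is not spanning, pick $w\notin V(P)$; (2) rule out direct appension at both ends via the unique-missing-ordering property; (3) for each $i\in\{2,\dots,s-1\}$, the failure of insertion at position $i$ forces a specific one of the (at most three) relevant triples to have its forbidden ordering in a prescribed form; (4) a short case analysis — really an averaging argument over $i$, since a fixed triple can be ``blamed'' by at most a bounded number of positions — shows that for large $s$ some insertion succeeds, contradicting maximality of $P$; hence $P$ is spanning and $f(n,3,5)=n$. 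If the averaging in step (4) does not immediately close, the fallback is to additionally allow the swap move in step (3), enlarging the move set to $\Theta(s)$ while keeping the number of relevant triples $\Theta(s)$ with bounded multiplicity, so the pigeonhole still applies.
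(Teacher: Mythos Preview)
Your overall strategy---take a longest path $P=v_1\cdots v_s$, pick $w\notin V(P)$, and find a place to insert it---is the right one, and it is essentially what the paper does. But your proposal does not isolate the precise reason the count closes, and the hedging (``for large $s$'', swap fallbacks, the vague ``cleaner route'') suggests you have not seen it. The observation you need is sharper than ``a fixed triple can be blamed by at most a bounded number of positions'': each triple $T_j=\{v_j,v_{j+1},w\}$ has exactly one forbidden ordering, and the three insertion/append moves that use $T_j$ require the three \emph{different} orderings $(v_j,v_{j+1},w)$, $(v_j,w,v_{j+1})$, $(w,v_j,v_{j+1})$; hence $T_j$ blocks at most \emph{one} move. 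Counting the two endpoint appends together with the $s-1$ interior insertions gives $s+1$ moves against only $s-1$ triples, so some move succeeds for every $s\ge 2$---no averaging, no swaps, no ``large $s$''. With only ``bounded'' in place of ``one'' the inequality would not close, so this is the point your write-up must make explicit.

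The paper's execution is slicker still and avoids any counting. Instead of surveying all positions, take the \emph{least} index $i$ with $(w,v_i,v_{i+1})\in E(G)$; if no such $i$ exists then in particular $(w,v_{s-1},v_s)\notin E(G)$, which in a $(3,5)$-tournament forces $(v_{s-1},v_s,w)\in E(G)$, and $w$ appends at the end. Otherwise insert $w$ just before $v_i$, forming $v_1\cdots v_{i-1}\, w\, v_i\cdots v_s$. The only new $3$-intervals to check are $(v_{i-2},v_{i-1},w)$ and $(v_{i-1},w,v_i)$; by minimality of $i$ we have $(w,v_{i-2},v_{i-1})\notin E(G)$ and $(w,v_{i-1},v_i)\notin E(G)$, so the remaining five orderings of each of these triples are edges, including the two we need. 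That is the entire argument---the ``least $i$'' choice does the pigeonhole for you.
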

\begin{proof}
Suppose for a contradiction that the identity does not hold, and let $n$ be the smallest integer such that $f(n,3,5)<n$. Let $G$ be an $n$-vertex $(3,5)$-tournament without a spanning path.  Let $u$ be a vertex in $G$, and let $P$ be a spanning path of $G-u$, where $P=v_1v_2\ldots v_{n-1}$. 

Suppose there exists an integer $i$ such that $uv_iv_{i+1} \in E(G)$, and let $i$ be the least such integer.   Let $Q=v_1\ldots v_{i-1}u{v_iv_{i+1}}\ldots v_{n-1}$. We claim that $Q$ is a spanning path in $G$. Let $\pi$ be a $3$-interval in $Q$. If $u$ is not an entry in $\pi$, then $\pi\in E(P)\subseteq E(G)$. Otherwise consider the position of $u$ in $\pi$. If $u$ is in the first position, then $\pi=uv_iv_{i+1}\in E(G)$ by the choice of $i$. If $u$ is in the second position, then $\pi=v_{i-1}uv_{i}$. By minimality of $i$, we have that $uv_{i-1}v_{i}\not\in E(G)$.  Since $G$ is a $(3,5)$-tournament, it follows that all other permutations of $\{u,v_{i-1},v_i\}$ are edges in $G$, including $\pi$.  The case where $u$ is in the third position is similar.

Otherwise, there is no such integer $i$ and we claim that $Q$ is a spanning path, where $Q=v_1\ldots v_{n-1}u$.  Indeed, if $\pi$ is a $3$-interval containing $u$, then $\pi =v_{n-2}v_{n-1}u$.  Since $uv_{n-2}v_{n-1}\not\in E(G)$ and $G$ is a $(3,5)$-tournament, it follows that $\pi \in E(G)$.
\end{proof}

The argument in \Cref{prop:35-spanning} easily extends to show that $f(n,r,r!-1) = n$ for $r\ge 2$, but \Cref{thm:hamthrsh} already implies this when $r\ge 4$, so we present the argument only in the case $r=3$.

\subsubsection{Paths in $(3,4)$-tournaments}

In this section, we show that every $n$-vertex $(3,4)$-tournament has a path of size $\Omega(n^{1/5})$.  However, significantly stronger results may be possible.  

\begin{conj}\label{conj:34-spanning}
    Every $(3,4)$-tournament has a spanning path.  That is, $f(n,3,4)=n$.
\end{conj}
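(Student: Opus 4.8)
The plan is to prove \Cref{conj:34-spanning} by induction on $n$, extending the insertion argument behind \Cref{prop:35-spanning}. Given an $n$-vertex $(3,4)$-tournament $G$, fix a vertex $u$ and apply the inductive hypothesis to $G-u$ to obtain a spanning path $P=v_1\cdots v_{n-1}$; the aim is to turn $P$ into a spanning path of $G$ that also uses $u$. Call $u$ \emph{insertable at gap $i$} (for $1\le i\le n$) if placing $u$ into the $i$-th position of $P$ — before $v_1$ when $i=1$, after $v_{n-1}$ when $i=n$, and between $v_{i-1}$ and $v_i$ otherwise — yields a spanning path. Insertability at an end gap requires a single edge, namely $(u,v_1,v_2)$ or $(v_{n-2},v_{n-1},u)$, while insertability at an internal gap $i$ requires the windows straddling the new position that involve $u$ — generically $(v_{i-2},v_{i-1},u)$, $(v_{i-1},u,v_i)$, $(u,v_i,v_{i+1})$, with one fewer next to each end — all to be edges. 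If $u$ is insertable at some gap we are done, so assume not.

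Next I would record the structure this forces. For $1\le j\le n-2$, the triple $\{v_j,v_{j+1},u\}$ has exactly two non-edges among its six orderings, and the three of its orderings in which $v_j$ precedes $v_{j+1}$ — namely $(u,v_j,v_{j+1})$, $(v_j,u,v_{j+1})$, $(v_j,v_{j+1},u)$, which obstruct gaps $j$, $j+1$, $j+2$ respectively — therefore include at most two non-edges. So non-insertability gives: $(u,v_1,v_2)$ is a non-edge; $(v_{n-2},v_{n-1},u)$ is a non-edge; and every internal gap has a non-edge among its windows. A purely combinatorial covering count does not suffice: already at $n=4$ the abstract problem ``place at most two tokens per triple to cover every gap'' is solvable. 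So the proof must exploit the interaction between overlapping triples and the fact that $P$ is itself a tight path — every $\{v_{j-1},v_j,v_{j+1}\}$ realizes a permitted pattern.

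I expect the decisive step to be a monotonicity/propagation argument in the spirit of \Cref{prop:trivthrsh}: as $i$ ranges over the internal gaps, track the position of $u$ relative to $\{v_{i-1},v_i\}$ — equivalently, which of the two ``$u$-in-the-middle'' orderings of $\{v_{i-1},v_i,u\}$ is even allowed to be an edge — and show that the obstructing non-edges, together with the tightness of $P$, propagate a constraint on this position that cannot be sustained the full length of the path, contradicting that all $n$ gaps are obstructed. The main obstacle is exactly this propagation: unlike the $(3,5)$ case, learning that one ordering of a triple is a non-edge still leaves two candidates for its second non-edge, so the ``bookmark'' pattern is not uniquely pinned down and may drift. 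I would try to control the drift either (a) by strengthening the inductive hypothesis to yield a spanning path of $G-u$ with a prescribed or sufficiently flexible endpoint, creating room for Pósa-type rotations (whose adaptation to directed tight paths is itself delicate, since reversal is unavailable); or (b) by a finite case analysis over the $\binom{6}{2}=15$ possible missing-pairs of $\{v_j,v_{j+1},u\}$ combined with the pattern of $\{v_{j-1},v_j,v_{j+1}\}$, showing in each case that $u$ is insertable at a nearby gap or that $P$ admits a short local rerouting reducing to an already-handled configuration.

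As supporting intuition, note that any finite set of reals can be arranged so that every internal element is a local extremum, i.e., every window is a ``peak'' or ``valley'' (patterns $132,231,213,312$); the unique missing-pair type that kills both monotone orderings $123$ and $321$ is $\{123,321\}$, whose four surviving orderings are exactly these zigzag patterns. Hence any $(3,4)$-tournament all of whose triples have a single missing-pair type has a spanning tight path — sorted order, reverse-sorted order, or a zigzag, according to the type. The content of the conjecture is to handle arbitrary, non-uniform $G$, and it is precisely this non-uniformity, letting the forbidden patterns vary from triple to triple, that obstructs the propagation step.
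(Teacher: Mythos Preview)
This statement is a \emph{conjecture}, not a theorem: the paper does not prove it. The paper explicitly leaves \Cref{conj:34-spanning} open, citing only computer verification for $n\le 7$ and proving the much weaker polynomial lower bound $f(n,3,4)\ge (1-o(1))(n/3)^{1/5}$ via \Cref{lem:acyclic} and \Cref{lem:noDisjointMaxPaths}. So there is no ``paper's own proof'' to compare against, and any purported proof would be a new result.

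Your proposal is candid about this: it is a plan, not a proof, and you correctly identify where it stalls. The insertion framework and the bookkeeping of which orderings of $\{v_j,v_{j+1},u\}$ are non-edges are sound, and your observation that the covering count alone already fails at $n=4$ is a useful sanity check. But the ``decisive step'' --- the propagation argument that the obstructing non-edges cannot persist across all $n$ gaps --- is exactly what you have not supplied, and both of your suggested remedies are themselves programs rather than arguments: (a) strengthening the induction to control an endpoint runs into the usual problem that the stronger statement must itself be inductively maintained, and you give no candidate invariant; (b) the $15$-case local analysis has no mechanism to prevent the ``drift'' you describe from simply continuing indefinitely, since each local configuration only constrains a bounded window. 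In short, the gap you name is genuine and is the content of the conjecture; nothing in the proposal closes it.
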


In support of this conjecture, we note that Misha Lavrov~\cite{ML} used computer search to verify \Cref{conj:34-spanning} when $n\le 7$ and obtained some evidence to suggest that perhaps \Cref{conj:34-spanning} may be further strengthened to assert that every $n$-vertex $(3,4)$-tournament has at least $2^{n-1}$ spanning paths.  This would be best possible.  Let $G$ be the $(3,4)$-tournament $[n]$ with $uvw\in E(G)$ if and only if $\max\{u,v,w\} \ne v$.  Every spanning path in $G$ consists of a spanning path of $G-n$ with $n$ added to the beginning or the end, and so $G$ has twice the number of spanning paths as $G-n$.  It follows by induction that $G$ has $2^{n-1}$ spanning paths.

To prove our lower bound on $f(n,3,4)$, we first use \Cref{lem:acyclic} to reduce to the case that $G$ has no triangles, and then we show that in such tournaments the longest paths are pairwise intersecting.  Note that if $G$ is a $(3,4)$-tournament with no triangles, then for each triple $\{u,v,w\}$ it must be that $G$ contains exactly two edges in $\{uvw,vwu,wuv\}$ and two edges in $\{wvu,vuw,uwv\}$.

\begin{lem}\label{lem:noDisjointMaxPaths}
If $G$ is an $n$-vertex $(3,4)$-tournament with no triangles, then the family of maximum paths in $G$ is pairwise intersecting.
\end{lem}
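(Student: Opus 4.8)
The plan is to argue by contradiction: suppose $P$ and $Q$ are maximum paths in $G$ with $V(P) \cap V(Q) = \emptyset$, and derive a contradiction by building a longer path. Write $P = u_1 u_2 \cdots u_p$ and $Q = w_1 w_2 \cdots w_q$. The guiding intuition is that in a triangle-free $(3,4)$-tournament, the orientations on any triple are very constrained (exactly two of the three cyclic rotations are edges, and likewise for the reverse), so a maximum path cannot simply ``stop'' -- there is typically a way to attach an outside vertex or to splice two paths together. The key structural fact I would extract first is a local extendability statement: if $x$ is a vertex not on a path $R = v_1 \cdots v_m$ (with $m \ge 2$), consider the triple $\{v_1, v_2, x\}$; since $G$ is triangle-free, among $x v_1 v_2$, $v_1 v_2 x$ and the orderings involving $x$ in the middle, enough edges are present that either $x$ can be prepended, $x$ can be appended at the far end via a symmetric argument, or $x$ can be inserted just after $v_1$. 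Making this precise -- exactly which insertions are forced -- is the technical heart of the argument and I expect it to require a short case analysis on the $4$ present orderings of each relevant triple.

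The main line of attack: look at the ``interface'' triples between $P$ and $Q$, namely triples using $u_p, u_{p-1}$ (the suffix of $P$) together with a vertex of $Q$, or $w_1, w_2$ (the prefix of $Q$) together with a vertex of $P$. First I would show that if any edge of the form $u_{p-1} u_p w_j \in E(G)$ or $u_p w_j w_{j+1} \in E(G)$ existed with the right adjacency, one could concatenate a sub-path of $P$ with a sub-path of $Q$ into a path strictly longer than both (here one uses that both $P$ and $Q$ are \emph{maximum}, so $p = q$ and any path on more than $p$ vertices is an immediate contradiction). Then, using triangle-freeness to rule out these configurations forces the orientations between the ends of $P$ and the ends of $Q$ into a rigid pattern. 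Finally I would play the two ends against each other: the triple $\{u_p, w_1, w_2\}$ and the triple $\{u_{p-1}, u_p, w_1\}$ impose incompatible constraints once triangle-freeness is imposed on both, because the "two out of three rotations" condition on one triple forces an edge that the condition on the adjacent triple forbids. That contradiction closes the proof.

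The step I expect to be the main obstacle is the bookkeeping in the splicing/insertion lemma: there are several ways an outside vertex can attach to a path (front, back, or internal insertion after the first or before the last vertex), and one must verify that in a triangle-free $(3,4)$-tournament at least one of these always succeeds, \emph{and} track which one so that the resulting longer path is genuinely a tight path (every $3$-interval is an edge), not merely a walk. A clean way to organize this is to define, for an ordered pair $(v_1, v_2)$ of consecutive path-vertices and an external $x$, the set $T(x; v_1, v_2) \subseteq S_3$ of present orderings of $\{x, v_1, v_2\}$; triangle-freeness says $|T| = 4$ with two rotations of each orientation class, and one checks by inspecting the $\binom{3}{1} = 3$ possibilities for which rotation is missing in each class that a compatible prepend-or-insert move exists. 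Once that lemma is in hand, applying it at both ends of both $P$ and $Q$ and combining with maximality should force the contradiction with only routine case-checking.
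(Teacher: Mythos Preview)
Your plan has a genuine gap at the final step. You assert that the triples $\{u_p, w_1, w_2\}$ and $\{u_{p-1}, u_p, w_1\}$ ``impose incompatible constraints once triangle-freeness is imposed on both,'' but they do not: these are distinct $3$-sets, and in a $(3,4)$-tournament the four edges present on one $3$-set are chosen independently of those on another. Concretely, maximality of $P$ does force $(u_{p-1},u_p,w_1)\notin E(G)$ (else $P$ extends), and maximality of $Q$ forces $(u_p,w_1,w_2)\notin E(G)$ (else $Q$ extends by prepending $u_p$); triangle-freeness then says the other two rotations in each of those cyclic classes are present. But nothing here manufactures an edge that anything else forbids --- one can write down a consistent triangle-free $(3,4)$-assignment on $\{u_{p-1},u_p,w_1,w_2\}$ (and indeed on all four ``corner'' vertices) meeting every constraint you impose. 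More generally, examining only a bounded collection of triples anchored at the four ends of $P$ and $Q$ cannot close the argument, and your local extendability statement (prepend, append, or insert after $v_1$) already fails for paths of length $m\ge 3$, since insertion after $v_1$ requires \emph{two} $3$-intervals to be edges simultaneously.

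The paper's proof is global rather than end-local. It aligns the two paths by index, $A=a_1\cdots a_t$ and $B=b_1\cdots b_t$, and calls an edge of the form $(a_j,a_{j+1},b_j)$, $(a_j,a_{j+1},b_{j+1})$ (or the $B$-analogues) an \emph{out edge}. If any out edge exists, one takes an out edge with $j$ maximal (then $j'\in\{j,j+1\}$ maximal); this extremal choice is precisely what guarantees the next $3$-interval in the splice $a_1\cdots a_{j+1}b_{j'}\cdots b_t$ is an edge, since its failure would be a later out edge --- yielding a path on more than $t$ vertices. If no out edge exists anywhere along the paths, then the full backward interleaving $a_t b_t a_{t-1} b_{t-1}\cdots a_1 b_1$ is a tight path on $2t$ vertices: each $3$-interval being a non-edge would, via triangle-freeness, force an out edge. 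The two ingredients missing from your outline are this index-aligned notion of out edge (your interface triples all sit at the ends, so you never see $(a_j,a_{j+1},b_j)$ for interior $j$) and the interleaving fallback, which is what handles the case where no direct splicing edge is available.
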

\begin{proof}
Suppose for a contradiction that $A$ and $B$ are disjoint maximum paths in $G$, with $A=a_1\ldots a_t$ and $B=b_1\ldots b_t$.  An \emph{out edge} of $A$ is an edge of the form $a_ja_{j+1}b_j$ or $a_ja_{j+1}b_{j+1}$, and an \emph{out edge} of $B$ is an edge of the form $b_jb_{j+1}a_j$ or $b_jb_{j+1}a_{j+1}$.

Suppose $G$ has out edges.  Let $a_ja_{j+1}b_{j'}$ or $b_jb_{j+1}a_{j'}$ be an out edge chosen first to maximize $j$, and next to maximize $j'\in\{j,j+1\}$.  Without loss of generality, we may assume the chosen out edge is $a_ja_{j+1}b_{j'}$.  Let $Q = a_1\ldots a_{j+1} b_{j'} \ldots b_t$.  We claim that $Q$ is a path.  Clearly, $a_ja_{j+1}b_{j'}\in E(G)$.  If $j'<t$, then $a_{j+1}b_{j'}b_{j'+1}\in E(G)$ since $b_{j'}b_{j'+1}a_{j+1} \not\in E(G)$ by our extremal choice of an out edge, and since $G$ is a $(3,4)$-tournament with no cycles, this forces $b_{j'+1}a_{j+1}b_{j'}\in E(G)$ and $a_{j+1}b_{j'}b_{j'+1}\in E(G)$.  All other consecutive triples in $Q$ are edges in $A$ or $B$.  Since $Q$ is a path in $G$ with at least $t+1$ vertices, we obtain a contradiction.

It follows that $G$ has no out edges.  Let $Q=a_tb_ta_{t-1}b_{t-1}\ldots a_2b_2a_1b_1$.  We claim that $Q$ is a path in $G$.  Indeed, if $a_jb_ja_{j-1}\not\in E(G)$, then since $G$ is a $(3,4)$-tournament with no triangles, this would force $b_ja_{j-1}a_j\in E(G)$ and $a_{j-1}a_jb_j\in E(G)$, but the latter is an out edge of $A$.  A similar argument shows that $b_ja_{j-1}b_{j-1}\in E(G)$.  Since $Q$ has more than $t$ vertices, we again obtain a contradiction.
\end{proof}

\begin{lem}\label{lem:longpath}
Let $s\ge 1$ and let $G$ be an $s^2$-vertex $(3,4)$-tournament with no triangles.  We have that $P^{(3)}_s\subseteq G$. 
\end{lem}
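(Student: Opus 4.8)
The plan is to prove the statement by induction on $s$, using \Cref{lem:noDisjointMaxPaths} as the only nontrivial input. For small $s$ the claim is immediate: $P^{(3)}_1$ and $P^{(3)}_2$ have no edges, so any $s^2\ge s$ vertices already form a copy, while $P^{(3)}_3$ is a single directed edge, which is present in any $(3,4)$-tournament on at least three vertices. So I assume $s\ge 4$ and that the statement holds for $s-1$. Since an induced sub-digraph of a $(3,4)$-tournament is again a $(3,4)$-tournament and triangle-freeness is inherited by induced sub-digraphs, the inductive hypothesis upgrades to: \emph{every} triangle-free $(3,4)$-tournament on at least $(s-1)^2$ vertices contains $P^{(3)}_{s-1}$.

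Now let $G$ be a triangle-free $(3,4)$-tournament on $s^2$ vertices, and let $P$ be a maximum tight path in $G$, of size $\ell$. If $\ell\ge s$, then the first $s$ vertices of $P$ form a copy of $P^{(3)}_s$ (a prefix of a tight path is a tight path, as its $3$-intervals are a subset of those of $P$), and we are done. So suppose $\ell\le s-1$. Then $G-V(P)$ has $s^2-\ell\ge s^2-(s-1)=(s-1)^2+s>(s-1)^2$ vertices, so by the (upgraded) inductive hypothesis $G-V(P)$ contains a tight path $Q$ on $s-1$ vertices. Viewing $Q$ as a path in $G$, it is vertex-disjoint from $P$ and has $|V(Q)|=s-1$.

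To reach a contradiction, note that since $P$ is a maximum path of $G$ and $Q\subseteq G$, we have $s-1=|V(Q)|\le\ell\le s-1$, so $\ell=s-1$ and $Q$ is itself a maximum path of $G$. Thus $P$ and $Q$ are two vertex-disjoint maximum paths in a triangle-free $(3,4)$-tournament, contradicting \Cref{lem:noDisjointMaxPaths}. (If one only wants to avoid that lemma, note that when $\ell<s-1$ strictly the path $Q$ is already longer than $P$, so only the boundary case $\ell=s-1$ genuinely requires \Cref{lem:noDisjointMaxPaths}.) Hence $\ell\ge s$ after all, so $P^{(3)}_s\subseteq G$, completing the induction.

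I do not anticipate a real obstacle: \Cref{lem:noDisjointMaxPaths} is already in hand, and the remainder is a short extremality-and-counting argument. The only points to state carefully in the write-up are the small-$s$ base cases, the elementary closure facts (prefixes of tight paths, induced sub-tournaments), and the inequality $s^2-(s-1)\ge(s-1)^2$ that licenses the application of the inductive hypothesis.
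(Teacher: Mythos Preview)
Your proposal is correct and follows essentially the same route as the paper: induct on $s$, use $s^2-(s-1)\ge (s-1)^2$ to find a second $(s-1)$-path disjoint from the first, and invoke \Cref{lem:noDisjointMaxPaths} to rule out both being maximum. The only cosmetic difference is that the paper removes an arbitrary $(s-1)$-path first and then argues one of the two disjoint $(s-1)$-paths is non-maximum, whereas you start with a maximum path $P$ and derive a contradiction when $|V(P)|\le s-1$; the logic is the same.
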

\begin{proof}
By induction on $s$.  The claim is clear when $s=1$.  Suppose that $s\ge 2$.  Since $|V(G)| = s^2 \ge (s-1)^2$, applying the inductive hypothesis to an arbitrary $(s-1)^2$-vertex subgraph of $G$ gives an $(s-1)$-vertex path $Q$.  Let $G'=G-V(Q)$, and note that $|V(G')|=s^2-(s-1) \ge (s-1)^2$.  Invoking the inductive hypothesis again, we have that $G'$ also contains an $(s-1)$-vertex path $Q'$.  Since $Q$ and $Q'$ are disjoint, these cannot both be maximum paths by \Cref{lem:noDisjointMaxPaths}.  Hence $G$ contains a path on $s$ vertices.
\end{proof}

We are now able to give our lower bound on $f(n,3,4)$.

\begin{thm}\label{thm:34-lower}
$f(n,3,4)\ge (1-o(1))(n/3)^{1/5}$.
\end{thm}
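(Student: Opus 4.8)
The plan is to combine the two lemmas just established, \Cref{lem:acyclic} and \Cref{lem:longpath}, with a short parameter balance. Fix an $n$-vertex $(3,4)$-tournament $G$ and let $\ell$ be the size of a longest tight path in $G$; since $k=4>0$, every triple spans an edge and so $\ell\ge 3$. The goal is to show $\ell\ge(1-o(1))(n/3)^{1/5}$, and it suffices to treat the case $\ell<(n/3)^{1/5}$, since otherwise the bound is immediate.

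First I would pass to a triangle-free subtournament. Because $P^{(3)}_{\ell+1}\not\subseteq G$, applying \Cref{lem:acyclic} with $r=3$ and $s=\ell$ produces an induced subgraph $H$ with $|V(H)|\ge(1-o(1))\bigl(n/(3\ell)\bigr)^{1/2}$ that contains no cycle, hence no copy of $C^{(3)}_3$; that is, $H$ is triangle-free. Being an induced subgraph of a $(3,4)$-tournament, $H$ is itself a $(3,4)$-tournament. Then, writing $m=|V(H)|$ and $b=\floor{\sqrt m}$ and choosing any $W\subseteq V(H)$ with $|W|=b^2$, the induced subgraph $H[W]$ is again a triangle-free $(3,4)$-tournament, so \Cref{lem:longpath} gives $P^{(3)}_b\subseteq H[W]\subseteq H\subseteq G$. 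Hence
\[
 \ell\ \ge\ b\ \ge\ \sqrt m-1\ \ge\ (1-o(1))\bigl(n/(3\ell)\bigr)^{1/4}-1 .
\]

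It remains to solve this inequality for $\ell$. Using the assumption $\ell<(n/3)^{1/5}$ we get $n/(3\ell)>(n/3)^{4/5}$, so $\bigl(n/(3\ell)\bigr)^{1/4}>(n/3)^{1/5}\to\infty$; substituting into the displayed bound absorbs the additive $-1$ and yields $\ell\ge(1-o(1))(n/3)^{1/5}$. Since the $o(1)$ depends only on $n$, this is uniform over all choices of $G$, which gives $f(n,3,4)\ge(1-o(1))(n/3)^{1/5}$.

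I do not expect a genuine obstacle in this deduction itself: the only delicate point is that \Cref{lem:acyclic} is fed the self-referential parameter $s=\ell$, and it is exactly this feedback that produces the exponent $1/5$. One square-root loss comes from the de Caen estimate inside \Cref{lem:acyclic} (extracting a near-acyclic piece on about $\sqrt n$ vertices) and a second from \Cref{lem:longpath} (extracting a path on about $\sqrt N$ vertices from a triangle-free tournament on $N$ vertices); balancing $\ell\asymp\bigl(n/(3\ell)\bigr)^{1/4}$ against the $\ell$ in the denominator forces $\ell^5\asymp n/3$. All of the real work is already packed into \Cref{lem:acyclic} and into \Cref{lem:noDisjointMaxPaths}, the key input to \Cref{lem:longpath}; the argument above is essentially bookkeeping.
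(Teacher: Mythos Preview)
Your proof is correct and follows essentially the same approach as the paper: let the longest path have size $s$, apply \Cref{lem:acyclic} to extract a triangle-free induced subtournament on roughly $\sqrt{n/(3s)}$ vertices, apply \Cref{lem:longpath} to find a path of size roughly $(n/(3s))^{1/4}$ inside it, and balance against $s$ to obtain the exponent $1/5$. The paper phrases the balancing as $\max\{s,(1-o(1))(n/(3s))^{1/4}\}\ge(1-o(1))(n/3)^{1/5}$ while you split into cases on whether $\ell<(n/3)^{1/5}$, and you are slightly more careful in passing to a subset of size exactly $\lfloor\sqrt m\rfloor^2$ before invoking \Cref{lem:longpath}; these are cosmetic differences.
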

\begin{proof}
Let $G$ be a $(3,4)$-tournament, and let $s$ be the maximum integer such that $P^{(3)}_s\subseteq G$.  By \Cref{lem:acyclic}, $G$ contains an induced acyclic subgraph $H$ on at least $(1-o(1))\sqrt{n/(3s)}$ vertices.  By \Cref{lem:longpath}, $H$ contains a path on $(1-o(1))(n/(3s))^{1/4}$ vertices.  Hence $f(n,3,4)\ge \max\{s,(1-o(1))(n/(3s))^{1/4}\} \ge (1-o(1))(n/3)^{1/5}$.
\end{proof}

\subsubsection{Paths in $(3,3)$-tournaments}

Let $X$ be a set of $r$-tuples.  The \emph{shift graph on $X$} is the digraph with an edge from $u$ to $v$ if and only if the $(r-1)$-suffix of $u$ equals the $(r-1)$-prefix of $v$.

\begin{thm}\label{thm:(33)-upper}
$f(n,3,3)\le 2\lg n + 6$.
\end{thm}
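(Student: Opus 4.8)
The plan is to exhibit an explicit $(3,3)$-tournament $G$ on $[n]$ in which every tight path has size at most $2\lg n + 6$. The natural candidate is a \enquote{bit-splitting} or \enquote{recursive majority} construction: the orientation of a triple $\{u,v,w\}$ with $u<v<w$ should depend only on how the three vertices distribute among the two halves of the vertex set, and then be defined recursively. Concretely, I would fix the $3$-orientation so that along any tight path, the sequence of canonical patterns of consecutive $3$-intervals forms a walk in $\PSG_3$ that is confined to a set $S$ of three patterns with no long walk — but with $|S|=3$ rather than $|S|=2$ we cannot use \Cref{thm:const-path-thresh} directly (it would only give a constant), so instead the construction must force the vertex set to shrink by a constant factor every few steps along any path.

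First I would set up the recursion. Split $[n]$ into $L = \{1,\dots,\lfloor n/2\rfloor\}$ and $R = \{\lfloor n/2\rfloor+1,\dots,n\}$. For a triple with all three vertices in $L$ or all three in $R$, orient it by the recursive rule inside that half. For a \enquote{mixed} triple, orient it by a fixed rule depending only on the $L/R$-pattern of the three vertices in the given order (there are $2^3-2 = 6$ mixed patterns, and we must choose $3$ of the $6$ orderings of each underlying set to be edges). The key design goal: choose the mixed rule so that in any tight path $v_1\cdots v_s$, within any window of a bounded number (say $4$ or $6$) consecutive vertices, the window cannot lie entirely within one of $L$ or $R$ unless... — more precisely, so that whenever a tight path has a long stretch of vertices all on the same side, it is actually a tight path \emph{in that half}, and whenever it crosses between sides it can do so only boundedly often before being \enquote{trapped}. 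Then $s(n) \le 2\cdot s(n/2) + c$ for a constant $c$, which unrolls to $s(n) \le c'\lg n$, and the work is to pin down the constants to get $2\lg n + 6$.

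The main obstacle — and where I would spend the most care — is choosing the mixed-triple orientations so that a tight path cannot oscillate between $L$ and $R$ freely. The danger is a path like $\ell_1 r_1 \ell_2 r_2 \cdots$ alternating sides: each $3$-interval is mixed, so the recursion never \enquote{bites.} I would handle this by tracking, along the path, the side-pattern of the current $3$-interval as a walk in an auxiliary $3$-vertex digraph on the mixed patterns $\{LLR, LRL, LRR\}$ (and their mirror images $RRL, RLR, RLL$), analogous to $\PSG_3$; the requirement $|S_{\text{mixed}}|$ small forces alternation to terminate quickly, after which the path is confined to one side. One clean choice: declare $(u,v,w)$ an edge iff $\rho(u,v,w)$ lies in a $3$-element acyclic-walk-bounded set when restricted appropriately, combined with a \enquote{the middle vertex decides the side} rule; then any maximal same-side run has its two endpoints forced, contributing the additive constant. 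Once the recursion $s(n)\le 2s(\lfloor n/2\rfloor)+O(1)$ is established with the right constant, the bound $f(n,3,3)\le 2\lg n+6$ follows by a routine induction on $n$, checking the base case $n$ small by hand.

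Finally, I would verify the construction really is a valid $(3,3)$-tournament: for every unordered triple exactly $3$ of its $6$ orderings are edges. This is automatic for same-side triples by induction, and must be checked for the $6$ choices of mixed pattern on each mixed triple (grouping the $6$ orderings of an underlying $\{a,b,c\}$ with $a<b<c$ by which of $L,R$ contains each, and confirming each underlying set gets exactly $3$ edges). I expect this verification to be bookkeeping once the pattern set is chosen, but it is the step that disciplines the choice in the previous paragraph.
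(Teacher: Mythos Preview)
Your plan has two genuine gaps that would need to be fixed before it could work.

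First, a rule for mixed triples that depends \emph{only} on the $L/R$-pattern of $(u,v,w)$ can never give a $(3,3)$-tournament.  Take an unordered triple $\{\ell_1,\ell_2,r\}$ with $\ell_1,\ell_2\in L$ and $r\in R$.  Its six orderings have $L/R$-patterns $LLR,LLR,LRL,LRL,RLL,RLL$: each pattern occurs exactly twice.  So any rule of the form \enquote{edge iff the $L/R$-pattern lies in $S$} selects an \emph{even} number of orderings, never three.  The same parity obstruction arises for $\{\ell,r_1,r_2\}$.  Thus the verification you flagged in your last paragraph is not bookkeeping; it fails for every choice of $S$.  You need the mixed rule to depend on more information than just which half each vertex lies in.

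Second, the recursion you wrote down, $s(n)\le 2\,s(n/2)+c$, unrolls to $s(n)=\Theta(n)$, not $c'\lg n$.  If the intent is that after boundedly many crossings the path is confined to \emph{one} side, the correct recursion is $s(n)\le s(n/2)+c$, which does give $O(\lg n)$.  But then you must actually prove that a path cannot have two long same-side runs separated by a crossing, which is exactly where the first obstruction bites: with a coarse $L/R$-only rule there is no mechanism to prevent a long $L$-run followed by a long $R$-run.

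The paper's construction is in the same spirit as your bit-splitting idea but crucially uses \emph{two} coordinates.  Working on $\FF_2^t$, for a triple $e$ it records the first coordinate $\alpha(e)$ where the three vectors are not all equal, and then a second coordinate $\beta(e)>\alpha(e)$ where the two that agreed at $\alpha(e)$ first differ; membership in $E(G)$ is decided by the $2\times 3$ bit pattern at rows $\alpha(e),\beta(e)$.  The extra $\beta$-row is what breaks the parity obstruction (now the six orderings of a mixed triple see \emph{different} $2\times 3$ patterns), and it also supplies the potential function: along any tight path, $\alpha$ is essentially monotone (it can increase at most once, after which a \enquote{trap pattern} forces $\alpha$ to strictly decrease), and within each block of constant $\alpha$ there are at most three edges, with at most one block of size exceeding $1$.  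This yields $|V(Q)|\le 2t+4$ directly, without a recursive halving argument.
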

\begin{proof}
Suppose that $n=2^t$ for some integer $t$.  We construct a $(3,3)$-tournament $G$ on $\FF_2^t$ as follows.  Given a triple $e\in\binom{V(G)}{3}$, let $\alpha(e)$ be the minimum index $\ell$ such that the vertices in $e$ are not all equal in the $\ell$ coordinate. Let $\beta(e)$ be the minimum index $\ell$ such that the pair in $e$ that agrees at coordinate $\alpha(e)$ disagrees at coordinate $\ell$.  Note that $\beta(e)>\alpha(e)$.  We extend $\alpha$ and $
\beta$ in the natural way to ordered triples.

Given an ordered triple $a$ with $a=uvw$, $i=\alpha(a)$, and $j=\beta(a)$, we include $a$ in $E(G)$ if the submatrix of the $(t\times 3)$-matrix with columns $u,v,w$ induced by rows $i$ and $j$ matches one of the following \emph{allowed patterns}.
\newcommand{\pattwobythree}[6]%
{%
    \left[ 
    \begin{array}{ccc}
        #1 & #2 & #3 \\
        #4 & #5 & #6
    \end{array} 
    \right]
}
\begin{align*}
P_1 &: \pattwobythree{0}{1}{0}{*}{*}{*} &
P_2 &: \pattwobythree{0}{0}{1}{0}{1}{*} &
P_3 &: \pattwobythree{0}{1}{1}{*}{1}{0} &
P_4 &: \pattwobythree{1}{1}{0}{*}{*}{*}
\end{align*}
In the above patterns, the asterisks match either character.  We claim that $G$ is a $(3,3)$-tournament.  Let $\{u,v,w\}$ be a triple of vertices, let $e=\{u,v,w\}$, let $i=\alpha(e)$, and let $j=\beta(e)$.  Without loss of generality, we may suppose that $u_i=v_i \ne w_i$, $u_j=0$ and $v_j=1$.  If $u_i=v_i=0$, then $uwv$ and $vwu$ match $P_1$, and $uvw$ matches $P_2$.  If $u_i=v_i=1$, then $uvw$ and $vuw$ match $P_4$, and $wvu$ matches $P_3$.  So $G$ is a $(3,3)$-tournament.

Suppose that $Q$ is path, with $Q=x_1\ldots x_m$.  Let $a_s=x_sx_{s+1}x_{s+2}$ for $1\le s\le m-2$.  We claim that if $a_s$ matches the \emph{trap pattern} $\pattwobythree{1}{1}{0}{1}{0}{*}$ with $i=\alpha(a_s)$ and $j=\beta(a_s)$, then $a_{s+1}$ also matches the trap pattern with $\alpha(a_{s+1})<\beta(a_{s+1})=i=\alpha(a_s)$.  First, note that $i$ is the minimum index such that $x_{s+1}$ and $x_{s+2}$ differ, implying $\alpha(a_{s+1}) \le i$.  For equality to hold, $a_{s+1}$ must match a pattern whose top row begins with a $1$ followed by a $0$, and this is incompatible with all allowed patterns.  Hence $\alpha(a_{s+1}) < i$, meaning that the first disagreement among vertices in $a_{s+1}$ occurs at a coordinate $\alpha(a_{s+1})$ where $x_{s+3}$ disagrees with the common value in $x_{s+1}$ and $x_{s+2}$.  Hence $\beta(a_{s+1})$ is the least coordinate where $x_{s+1}$ and $x_{s+2}$ disagree, giving $\beta(a_{s+1}) = i$.  It follows that $a_{s+1}$ matches $\pattwobythree{0}{0}{1}{1}{0}{*}$ or $\pattwobythree {1}{1}{0}{1}{0}{*}$.  Since the first of these is incompatible with the allowed patterns, the claim follows.  

Next, we claim that if $\alpha(a_s)\ne \alpha(a_{s+1})$, then either $a_s$ matches $P_2$ or $a_{s+1}$ matches the trap pattern.  Suppose that $\alpha(a_s)\ne \alpha(a_{s+1})$ and $a_s$ matches a pattern in $\{P_1,P_3,P_4\}$.  We show that $a_{s+1}$ matches the trap pattern.  Let $i=\alpha(a_s)$, let $j=\beta(a_s)$, and let $j'$ be the index of the first coordinate where $x_{s+1}$ and $x_{s+2}$ differ.  Note that $i\le j'$.  If $i=j'$, then $a_s$ does not match $P_3$ and so $a_s$ matches a pattern in $\{P_1,P_4\}$.  In this case, we have $x_{s+1}(j')=x_{s+1}(i) = 1$ and $x_{s+2}(j') = x_{s+2}(i)=0$.  Otherwise, $i<j'$, implying that that $x_{s+1}(i)=x_{s+2}(i)$.  Therefore $a_s$ matches $P_3$ and $j=j'$, again giving $x_{s+1}(j') = x_{s+1}(j) = 1$ and $x_{s+2}(j') = x_{s+2}(j) = 0$.  It follows that $\alpha(a_{s+1}) < j'$, since $\alpha(a_{s+1}) = j'$ would require $a_{s+1}$ to match a pattern whose first row begins with a 1 and then a 0, which is inconsistent with the allowable patterns.  Since $\alpha(a_{s+1}) < j'$, it follows that $x_{s+1}$ and $x_{s+2}$ agree at coordinate $\alpha(a_{s+1})$ and therefore $\beta(a_{s+1}) = j'$.  Hence $a_{s+1}$ matches $\pattwobythree{0}{0}{1}{1}{0}{*}$ or $\pattwobythree{1}{1}{0}{1}{0}{*}$.  The former is inconsistent with the allowable patterns, and so the claim follows.

A \emph{block} is a maximal interval $[a_s, \ldots,a_{s'}]$ of edges of $Q$ such that $\alpha(a_s) = \cdots = \alpha(a_{s'})$.  Partition $E(Q)$ into blocks $B_1,\ldots,B_r$.  Note that within each $B_j$, the edges match patterns that follow a walk in the shift graph on the top row of the patterns.  Note that this shift graph is acyclic, and the maximum path has order $3$.  It follows that $|B_j| \le 3$ for each $j$.  We claim there is at most one index $j$ such that the $\alpha$ component of edges in $B_j$ is less than the $\alpha$ component of edges in $B_{j+1}$.  Let $j$ be the least index such that adjacent blocks $B_j$ and $B_{j+1}$ satisfy $\alpha(a_s) < \alpha(a_{s+1})$, where $a_s$ is the last edge in $B_j$ and $a_{s+1}$ is the first edge in $B_{j+1}$.  Since $\alpha(a_s)\ne \alpha(a_{s+1})$, either $a_s$ matches $P_2$ or $a_{s+1}$ matches the trap pattern.  If $a_s$ matches $P_2$, then since $x_{s+1}$ and $x_{s+2}$ differ in $\alpha(a_s)$, it follows that $\alpha(a_{s+1}) \le \alpha(a_s)$, contradicting $\alpha(a_s) < \alpha(a_{s+1})$.  Hence $a_{s+1}$ matches the trap pattern and $\alpha(a_{s+1}) > \cdots > \alpha(a_{m-2})$.  It follows that $r\le 2t$.

Next, we claim that at most one block has size more than $1$.  Let $i$ be the least integer such that $|B_i| > 1$.  Note that if the last edge in $B_i$ matches $P_2$, then $|B_i| = 1$ since in the successor graph, the top row of $P_2$ has indegree zero.  Hence $|B_i| > 1$ implies that the last edge $a_s$ in $B_i$ does not match $P_2$ and so all edges in $Q$ after $a_s$ match the trap pattern, giving $|B_j| = 1$ for $j>i$.  It follows that $|E(Q)|\le (2t-1) + 3 = 2t+2$ and $m=|V(Q)| = |E(Q)| + 2 \le 2t + 4 = 2\lg n + 4$.  

If $n$ is not a power of two, then we apply the argument to the integer $n'$, where $n'$ is the least power of $2$ greater than $n$.  Since $n'\le 2n$, it follows that $f(n,3,3)\le f(n',3,3) \le 2\lg n' + 4 \le 2\lg n + 6$.
\end{proof}

\begin{thm}\label{thm:(33)-lower}
$f(n,3,3) \ge \Omega\left(\left(\frac{\ln n}{\ln \ln n}\right)^{1/2}\right)$.
\end{thm}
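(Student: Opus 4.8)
The plan is to show, by induction on $s$, that every tight-cycle-free $(3,3)$-tournament on at least $g(s):=\prod_{i=2}^{s}i!$ vertices contains a copy of $P^{(3)}_s$. Combined with \Cref{lem:acyclic} — which lets us replace an arbitrary $(3,3)$-tournament avoiding $P^{(3)}_s$ by a tight-cycle-free induced sub-tournament on $\Omega(\sqrt{n/s})$ vertices — this yields the theorem: since $\ln g(s)=\sum_{i\le s}\ln(i!)=\Theta(s^{2}\ln s)$, the inequality $s\cdot g(s)^{2}\le n$ holds for some $s=\Omega\bigl((\ln n/\ln\ln n)^{1/2}\bigr)$. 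What actually matters is only that the per-step blow-up in the induction is superexponential but sub-doubly-exponential in $s$; any bound $g(s)=2^{\Theta(s^{2}\ln s)}$ gives the same conclusion.

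So let $G$ be a tight-cycle-free $(3,3)$-tournament on $m\ge g(s)$ vertices, with vertex set an interval of integers. Being tight-cycle-free means that each triple has exactly two of its six orderings in one of its two cyclic classes and one in the other, so that whenever a couple of orderings of a triple are known to be absent the remaining ones are forced; this is the same mechanism exploited, in a stronger form for $(3,4)$-tournaments, in \Cref{lem:noDisjointMaxPaths}.

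We grow a tight path $v_1\cdots v_j$ in $G$ together with a large reservoir $R$ of unused vertices, maintaining the invariant that $v_1\cdots v_j$ can be continued to a tight path using only vertices of $R$; we start with $j=3$, with $v_1v_2v_3$ any edge and $R=V(G)\setminus\{v_1,v_2,v_3\}$, and by the induction hypothesis this can be maintained for the first $s-1$ steps if $R$ stays large. At a general step we want to append one vertex of $R$ to the end $(v_{j-1},v_j)$ of the path while preserving the invariant. Whether a given $z\in R$ can be appended depends only on the order-type of $z$ relative to the last two vertices of the path and on which orderings of the resulting triple are edges; but to keep the invariant alive after appending, we must control how the chosen vertex and the rest of the reservoir relate — both in order and in edge-behaviour — to the whole current path. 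We therefore partition $R$ into the relevant finite list of \emph{types} and pass to the largest class, losing at this step a factor that is superexponential (essentially factorial) in $j$, for exactly this reason. Inside one type the tournament is monotone enough — here the no-tight-cycle hypothesis supplies the required forced edges — that we may append one more vertex $v_{j+1}\in R$ with $(v_{j-1},v_j,v_{j+1})\in E(G)$ and re-establish the invariant for the thinned reservoir. After $s-3$ such steps the reservoir has shrunk by a factor of at most $\prod_{i\le s}i!=g(s)$, hence is still nonempty, and the path has reached $s$ vertices.

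The crux is this extension step. In an ordinary tournament a single new vertex can always be inserted into a path, so no reservoir is needed; for tight paths in a $(3,3)$-tournament, extendability is a property of the last \emph{pair} of vertices together with the local edge pattern, so a single vertex need not extend the path and one is forced to reserve, and then pigeonhole over, a large pool — precisely the source of the superfactorial blow-up per step, and hence of the $(\ln n/\ln\ln n)^{1/2}$, rather than $\ln n$, in the conclusion. Identifying the right finite family of types, so that the largest class genuinely admits a one-vertex extension preserving the invariant, is where all the work lies; the reduction to the tight-cycle-free case via \Cref{lem:acyclic}, the reservoir bookkeeping, and the inversion of the recursion $g(s)=s!\cdot g(s-1)$ are routine.
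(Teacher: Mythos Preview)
Your proposal is not a proof: the step you yourself call ``the crux'' is never carried out. You assert that one can partition the reservoir into ``types'' indexed by order and edge data relative to the current path, pass to the largest class, and then always find an extension inside that class with the invariant preserved --- and you explicitly write that ``identifying the right finite family of types \ldots\ is where all the work lies.'' But you do not define the types, do not verify that the largest class admits an extension, and do not check that the number of types is at most $j!$ (rather than, say, exponential in $j^2$, which is what a naive classification by edge patterns against all pairs on the path would give and which would ruin the final bound). Nor is it clear what the maintained invariant means precisely (``can be continued'' --- to what length, and why does a one-step partition of $R$ by local data control this?). Without this, the inductive framework is just scaffolding.

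For comparison, the paper's argument also begins with \Cref{lem:acyclic} to pass to a sub-tournament $H$ with no long walks, but then avoids the reservoir machinery entirely. It colours each pair $\{u,v\}$ (with $u<v$) by $(a,b)$, where $a$ and $b$ are the maximum lengths of walks in $H$ ending $uv$ and $vu$ respectively, and shows there is no monochromatic triangle: if $u<v<w$ all had colour $(a,b)$ then $uvw,wvu\notin E(H)$ (else a walk length would strictly increase), and since $H$ is a $(3,3)$-tournament with no $C^{(3)}_3$ this forces a pair of edges in one cyclic class, which again increments both $a$ and $b$ --- contradiction. One then invokes the bound $R(3;p)\le 3\,p!$ with $p=s^2$ colours to get $|V(H)|<3(s^2)!$, and inverting gives $s=\Omega((\ln n/\ln\ln n)^{1/2})$. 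This is exactly the walk-length / Ramsey mechanism that would have to underlie any successful version of your extension step; the paper isolates it directly rather than hiding it inside an unproved type-partition lemma.
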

\begin{proof}
    Let $G$ be a $(3,3)$-tournament on $[n]$, and let $s$ be the size of a maximum path in $G$.  By \Cref{lem:acyclic}, there is a subtournament $H$ with $|V(H)| \ge (1-o(1))\sqrt{n/3s}$ such that every walk in $H$ has size at most $s$.  Let $n'=|V(H)|$; we may assume that $V(H)=\{1,\ldots,n'\}$.
    
    Let $F$ be a copy of the complete graph on $\{1,\ldots,n'\}$; we use $H$ to color the edges of $F$ as follows.  For each pair of vertices $u,v$ in $H$ with $u<v$, let $\ell(uv) = (a,b)$ where $a$ is the maximum length of a walk in $H$ ending $uv$ and $b$ is the maximum length of a walk in $H$ ending $vu$.  We claim that there is no monochromatic triangle in $F$ under the coloring $\ell$.  Suppose for a contradiction that $uvw$ is a monochromatic triangle with $u<v<w$.  Let $(a,b)$ be the common color on $uvw$ in $F$.  Note that $uvw\not\in E(H)$, since a walk of length $a$ ending at $uv$ would extend to a walk of length $a+1$ ending at $vw$, contradicting that $\ell(vw)$ has first component $a$.  Similarly, $wvu\not\in E(H)$ as both $\ell(vw)$ and $\ell(uv)$ have equal second component $b$.  Since $H$ is a $(3,3)$-tournament, it follows that $E(H)$ contains $\{vwu,wuv\}$ or $E(H)$ contains $\{uwv,vuw\}$.  
    
    Suppose that $\{vwu,wuv\}\subseteq E(H)$.  Since a walk ending $vw$ of length $a$ extends to a walk ending $wu$ of length $a+1$, it follows that $b\ge a+1$.  Also, since $wuv\in E(H)$, a walk ending $wu$ of length $b$ extends to a walk ending $uv$ of length $b+1$, and so $a\ge b+1$.  This is a contradiction, and the case that $\{uwv,vuw\}\subseteq E(H)$ leads to a similar contradiction.

    Recall that the $p$-color Ramsey number for triangles $R(3;p)$ satisfies $R(3;p)\le 3p!$ (see~\cite{Ramsey-survey} for an introductory survey).  Since walks in $H$ have size at most $s$, the lengths range from $0$ to $s-(r-1)$, and so $\ell$ gives a $s^2$-edge-coloring of a copy of $K_{n'}$ with no monochromatic triangles.  It follows that $(1-o(1))\sqrt{n/3s}\le n'<R(3;s^2) \le 3(s^2)! \le 3(s^2)^{s^2}=3s^{2s^2}$.  Taking the natural log of both sides gives $(1/2)(\ln(n/(3s))) - o(1) \le \ln(3) + 2s^2\ln(s)$ or $(1/2)(\ln n) - o(1) \le (3/2)\ln(3) + (2s^2 + (1/2))\ln(s)$.  If $s\ge \ln n$, then $G$ has a path on $\ln n$ vertices and the bound follows.  Otherwise $s<\ln n$ and it follows that $(1/2)\ln n - o(1) \le (3/2)\ln(3) + (2s^2 + (1/2))\ln\ln n$ and so $\frac{\ln n}{2\ln\ln n} -o(1) \le 2s^2 +  (1/2) \le 3s^2$.  Hence $s\ge [\frac{\ln n}{6\ln\ln n} - o(1)]^{1/2}$.  
\end{proof}

\subsection{Paths in $r$-uniform tournaments for $r\in\{4,5\}$}

In this section, we summarize known results for $r\in\{4,5\}$ in \Cref{fig:thresholds}.  Most of these are obtained by applying general theorems in \Cref{sec:2-Thresholds-general-r}.  To obtain the threshold for growing paths when $r\in\{4,5\}$, we find $\trans{}{\PSG_4}$ and $\trans{}{\PSG_5}$.  

A list of integers $(u_1,\ldots,u_r)$ has a \emph{bump} at position $i$ if $u_i > u_{i-1}$ and $u_i > u_{i+1}$.

\newcommand{\drawPSGFour}%
{
    \begin{tikzpicture}[]
        \begin{scope}[every node/.style={circle,draw,black,inner sep=1pt,minimum size=4ex},line width=0.7pt,anchor=north, font=\small]

            \foreach \n/\lab in {0/3142,1/1423,2/4231,3/2314}
            {{
                \pgfmathsetmacro{\ang}{(3-\n)*90}
                \node at (\ang:2.15cm) (U\lab) {$\lab$} ;
            }}

            \foreach \n/\lab in {0/3241, 1/2413, 2/4132, 3/1324}
            {{
                \pgfmathsetmacro{\ang}{\n*90}
                \node at (\ang:0.85cm) (U\lab) {$\lab$} ;
            }}

            \begin{scope}[dotted,decoration={markings,mark=at position 0.5 with {\arrow{Stealth}}},every path/.style={bend left=30}]
                \foreach \u/\v in {%
                        U1423/U4231, U4231/U2314, U2314/U3142, U3142/U1423%
                    }
                {{
                    \draw[postaction={decorate}] (\u) to (\v) ;
                }}
            \end{scope}

            \begin{scope}[dotted,decoration={markings,mark=at position 0.75 with {\arrow{Stealth}}},every path/.style={bend right=30}]
                \foreach \u/\v in {%
                        U3241/U2413, U2413/U4132, U4132/U1324, U1324/U3241%
                    }
                {{
                    \draw[postaction={decorate}] (\u) to (\v) ;
                }}
            \end{scope}

            \begin{scope}[decoration={markings,mark=at position 0.75 with {\arrow{Stealth}}},every path/.style={bend left=15}]
                \foreach \u/\v in {
                    U2413/U4231, U4231/U2413, U3241/U2314, U2314/U3241, U1324/U3142, U3142/U1324, U1423/U4132, U4132/U1423%
                    }
                {{
                    \draw[postaction={decorate}] (\u) to (\v) ;
                }}
            \end{scope}
        \end{scope}
    \end{tikzpicture}
}
\begin{figure}
    \begin{center}
    \drawPSGFour
    \end{center}
    \caption{The two shift cycles above (dotted edges) in $\PSG_4$ are replaced with four $2$-cycles (solid edges) in our construction of a family of disjoint cycles. }\label{fig:PSG4}
\end{figure}

\begin{prop}\label{prop:PSG4-trans}
    We have $\trans{}{\PSG_4} = 10$ and $\acyc(\PSG_4) = 4! - \trans{}{\PSG_4} = 14.$
\end{prop}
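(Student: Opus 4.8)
The plan is to pin down $\trans{}{\PSG_4}$ by squeezing it between a packing of vertex-disjoint cycles and an explicit small feedback vertex set, and then to read off $\acyc(\PSG_4)$ from the identity $\trans{}{\PSG_4}+\acyc(\PSG_4)=24$.

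\textbf{Lower bound ($\trans{}{\PSG_4}\ge 10$).} I would exhibit $10$ pairwise vertex-disjoint cycles. Start from the partition of $V(\PSG_4)$ into the $6$ shift cycles, each of size $4$. By \Cref{lem:chorded-shift-cycles-count}, exactly $\varphi(4)=2$ of them carry a chord or loop, namely the two shift cycles through the loop vertices $1234$ and $4321$; by \Cref{lem:split-cycle} each of these splits into two cycles (concretely, a loop together with a $3$-cycle), so $2$ cycles become $4$. This alone only gives $8$ disjoint cycles, so the real point is to do better on two of the four remaining (chordless) shift cycles. I would take the two shift cycles $\langle 1324,3241,2413,4132\rangle$ and $\langle 1423,4231,2314,3142\rangle$ shown in \Cref{fig:PSG4} and replace their union (which is $2$ cycles on $8$ vertices) by the four $2$-cycles $\{1423,4132\}$, $\{2413,4231\}$, $\{3241,2314\}$, $\{1324,3142\}$ — each such pair is checked to span a $2$-cycle by verifying two pattern-matches. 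Together with the two untouched shift cycles (each a $4$-cycle), this yields $4+4+2=10$ disjoint cycles, which partition $V(\PSG_4)$ and force $\trans{}{\PSG_4}\ge 10$.

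\textbf{Upper bound ($\trans{}{\PSG_4}\le 10$).} Here I would show $\acyc(\PSG_4)\ge 14$ by producing a $14$-vertex acyclic induced subgraph. Write $H_i$ for the set of the six permutations of $[4]$ having the entry $4$ in position $i$. \Cref{prop:max-elt-travel} says that along an edge $uv$ the position of $4$ either decreases by one, becomes the last position, or (only if it was the first position) moves arbitrarily. Applying this with $i=2$ and $i=3$ shows that $H_2$ and $H_3$ each induce no edges, that no edge runs from $H_2$ into $H_3$, and hence that inside $H_2\cup H_3$ every edge runs from $H_3$ to $H_2$; since neither loop vertex ($1234$, $4321$) lies in $H_2\cup H_3$, the $12$-vertex subgraph induced by $H_2\cup H_3$ is acyclic. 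It remains to adjoin two more vertices. I would pick $4312\in H_1$ and $2134\in H_4$ (note $2134$ is the reversal of $4312$, so the two verifications are mirror images of each other): one checks that the out-neighbourhood of $4312$ is $\{3124,4123,4132,4231\}$, disjoint from $H_2\cup H_3\cup\{2134\}$, and that the in-neighbourhood of $2134$ is $\{3214,2314,1324,4213\}$, disjoint from $H_2\cup H_3\cup\{4312\}$. Thus neither new vertex can lie on a cycle, so $H_2\cup H_3\cup\{4312,2134\}$ is an acyclic set of size $14$, giving $\trans{}{\PSG_4}=24-\acyc(\PSG_4)\le 10$. Combining the two bounds yields $\trans{}{\PSG_4}=10$ and therefore $\acyc(\PSG_4)=24-10=14$.

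\textbf{Main obstacle.} There is no conceptual difficulty; the content is in two finite but not-quite-automatic bookkeeping steps. First, the cheap packing gives only $8$ disjoint cycles, and one must see that replacing two particular shift cycles by four $2$-cycles is exactly what is needed to reach $10$ (which is tight). Second, one must locate a pair of vertices of $H_1\cup H_4$ that can be returned to $H_2\cup H_3$ without closing a cycle; this requires checking that the chosen pair avoids every short cycle that could otherwise be created, in particular the $2$-cycles $\{1423,4132\}$, $\{2413,4231\}$, $\{3241,2314\}$, $\{1324,3142\}$ and the loops at $1234$, $4321$. Both checks are small and finite, but they are where all the work of the proposition sits.
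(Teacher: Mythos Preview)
Your proof is correct. The lower bound (the packing of ten disjoint cycles, with the two loop-carrying shift cycles split and the two shift cycles of \Cref{fig:PSG4} rearranged into four $2$-cycles) is exactly the paper's construction.

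For the upper bound you take a genuinely different route. The paper builds a size-$10$ transversal directly: it takes the two loop vertices together with the eight permutations having $\max\{a_1,a_2,a_3\}=a_2$, and argues via bump-tracking that every cycle meets this set. You instead build a size-$14$ acyclic set directly, leaning on \Cref{prop:max-elt-travel} to see that $H_2\cup H_3$ is already acyclic (all edges go $H_3\to H_2$), and then adjoining a sink $4312$ and a source $2134$. The resulting acyclic sets are not complementary descriptions of the same object --- the paper's complement-of-transversal contains none of $H_2$ while yours contains all of it --- so these really are two different witnesses. Your argument has the virtue of reusing \Cref{prop:max-elt-travel} cleanly and making the acyclicity transparent (a bipartite-with-one-way-edges core plus a source and a sink); the paper's transversal argument, on the other hand, generalises more naturally to the $r=5$ case treated in \Cref{prop:PSG5-trans}, where the ``bump at position $2$'' idea scales and a direct $H_i$-layer argument would not.
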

\begin{proof}
Note that by \Cref{prop:max-elt-travel}, for $r\ge 3$, there is no edge $uv$ in $\PSG_r$ with $u=(u_1,\ldots,u_r)$, $v=(v_1,\ldots,v_r)$, $u_r = r$, and $v_1 = r$.

We construct a cycle transversal $S$ in $\PSG_4$ of size $10$.  Let $S$ consist of the vertices $1234$, $4321$, and the permutations $(a_1,a_2,a_3,a_4)$ of $[4]$ such that $\max\{a_1,a_2,a_3\}=a_2$.  Note that $|S| = 2 + (1/3)\cdot 4! = 10$.  Let $C$ be a cycle in $\PSG_4$.  Suppose for a contradiction that $V(C)$ and $S$ are disjoint.  Since $V(C)$ and $S$ are disjoint, no vertex in $C$ has a bump in the second position.  It follows that no vertex $u\in V(C)$ has a bump in the third position either, since then the successor of $u$ in $C$ would have a bump in the second position.  Therefore for each $u\in V(C)$ with $u=(u_1,\ldots,u_4)$, we have that $u_1 = 4$ or $u_4 = 4$.  Since $C$ has no edge $uv$ with $u=(u_1,\ldots,u_4)$ and $v=(v_1,\ldots,v_4)$ where $u_4 = 4$ but $v_1 = 4$, it must be that either every vertex in $C$ has $4$ in the first position, forcing $C$ to be the loop $\cycle{4321}$, or every vertex in $C$ has $4$ in the fourth position, forcing $C$ to be the loop $\cycle{1234}$.  It follows that $\trans{}{\PSG_4} \le 10$.

To show $\trans{}{\PSG_4} \ge 10$, we construct a disjoint family $\CC$ of cycles in $\PSG_4$ with $|\CC| = 10$.  Note that the shift cycles partition $V(\PSG_4)$ into $6$ subgraphs containing $C_4$.  To produce four more cycles, we modify the partition as follows.  As in \Cref{lem:split-cycle}, we split $\cycle{1234, 2341, 3412, 4123}$ into the loop $\cycle{1234}$ and the $3$-cycle $\cycle{2341, 3412, 4123}$.  Similarly, we split $\cycle{4321, 3214, 2143, 1432}$ into the loop $\cycle{4321}$ and the $3$-cycle $\cycle{3214, 2143, 1432}$.  Finally, we use the vertices in the shift cycles $\cycle{1324, 3241, 2413, 4132}$ and $\cycle{1423, 4231, 2314, 3142}$ to produce four $2$-cycles: $\cycle{1324, 3142}$, $\cycle{3241, 2314}$, $\cycle{2413, 4231}$, $\cycle{4132, 1423}$; see \Cref{fig:PSG4}.  We leave in place the other two shift cycles, giving 
\begin{align*}
    \CC = \{&\cycle{1234}, \cycle{4321}, \cycle{1324, 3142}, \cycle{3241, 2314}, \cycle{2413, 4231}, \cycle{4132, 1423}, \\
        &\cycle{2341, 3412, 4123}, \cycle{3214, 2143, 1432}, \cycle{1243, 2431, 4312, 3124}, \cycle{1342, 3421, 4213, 2134}\}.
\end{align*}
\end{proof}

\begin{figure}
    \begin{center}
    \begin{tabular}{l|c|l}
    Condition  &    Threshold for $k$  & Reference\\
    \hline
    $f(n,4,k)\ge 6$ & $13$ & \Cref{thm:PSG-t-const} with $(r,t)=(4,3)$ and \Cref{thm:const-path-thresh} \\
    $f(n,4,k) \ge \omega(1)$ & 15 & \Cref{prop:PSG4-trans} and \Cref{thm:growing-path-tourn-threshold} \\
    $f(n,4,k) \ge \Omega(n)$ & $[15,19]$ & \Cref{thm:linear-thresh} \\
    $f(n,4,k) = n$ & $[15,23]$ & \Cref{thm:hamthrsh} \\
    && \\
    $f(n,5,k)\ge 6$ & $49$ & \Cref{thm:PSG-t-const} with $(r,t)=(5,2)$ and \Cref{thm:const-path-thresh} \\
    $f(n,5,k)\ge 8$ & $73$ & \Cref{thm:PSG-t-const} with $(r,t)=(5,4)$ and \Cref{thm:const-path-thresh} \\
    $f(n,5,k) \ge \omega(1)$ & 85 & \Cref{prop:PSG5-trans} and \Cref{thm:growing-path-tourn-threshold} \\
    $f(n,5,k) \ge \Omega(n)$ & $[85,97]$ & \Cref{thm:linear-thresh} \\
    $f(n,5,k) = n$ & $[85,113]$ & \Cref{thm:hamthrsh}    
    \end{tabular}
    \end{center}
    \caption{\label{fig:thresholds}Bounds on thresholds in $4$-uniform and $5$-uniform tournaments}
\end{figure}

\begin{prop}\label{prop:PSG5-trans}
    We have $\trans{}{\PSG_5} = 36$ and $\acyc(\PSG_5) = 5! - \trans{}{\PSG_5} = 84$.
\end{prop}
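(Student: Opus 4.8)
The plan is to prove the two inequalities $\trans{}{\PSG_5}\le 36$ and $\trans{}{\PSG_5}\ge 36$ separately; the equality $\acyc(\PSG_5)=5!-36=84$ then follows at once from $\trans{}{\PSG_5}+\acyc(\PSG_5)=|V(\PSG_5)|=120$. The workhorse in both directions is \Cref{prop:max-elt-travel}: along any edge of $\PSG_5$ the position of the entry $5$ either decreases by one, jumps to the last coordinate, or (only when $5$ was in the first coordinate) moves arbitrarily. Hence on any cycle $C$ the position of $5$ passes through only a bounded number of ``descending runs'', and in particular $C$ contains a vertex with $5$ in its first or last coordinate, which anchors the case analysis.

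For the upper bound I would exhibit an explicit $36$-vertex transversal $S$, in the spirit of \Cref{prop:PSG4-trans} together with the bookmark argument of \Cref{thm:cycle-trans}. Since each loop of $\PSG_5$ is a one-vertex cycle, $S$ is forced to contain the two loop vertices $12345$ and $54321$; the remaining $34$ elements of $S$ would be chosen so that on any cycle of $\PSG_5-S$ some potential function — built from the position of $5$ together with the order type of the three coordinates lying outside the positions of $5$ and $4$, taken relative to a fixed linear order on the permutations of $[3]$ — is weakly monotone along edges and strictly increases at least once per revolution, a contradiction. Note that the hypothesis $r\ge 3t+2$ of \Cref{thm:cycle-trans} already fails at $r=5$ (it forces the degenerate value $t=1$), so the boundary cases in which $5$ or $4$ occupies one of the last few coordinates, where no such ``cluster'' pattern is defined, must be absorbed into $S$ by hand. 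Arranging that $|S|$ comes out to exactly $36$ rather than something larger is the delicate part; once $S$ is fixed one verifies edge by edge that it meets every cycle.

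For the lower bound I would build $36$ pairwise vertex-disjoint cycles. The $24$ shift cycles, each of size $5$, partition $V(\PSG_5)$; by \Cref{lem:chorded-shift-cycles-count} exactly $\varphi(5)=4$ of them — those associated with $d\in\{1,2,3,4\}$ — carry a chord or loop, and by \Cref{lem:split-cycle} each of these splits into two cycles (for $d=1$ and $d=4$ one peels off the loops $\cycle{12345}$ and $\cycle{54321}$ together with a $4$-cycle; for $d=2$ and $d=3$ one gets a $2$-cycle and a $3$-cycle). This already yields $24-4+8=28$ disjoint cycles. To gain the remaining $8$, I would repackage a carefully chosen family of the chordless shift cycles exactly as in \Cref{fig:PSG4}, using \Cref{lem:second-chord} to locate the usable edges: replace six chordless shift cycles ($30$ vertices) by $14$ shorter cycles (a mix of $2$-cycles and $3$-cycles), leaving the other $14$ chordless shift cycles untouched, for a total of $14+14+8=36$. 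The principal obstacle, in both halves, is that the extremal objects must be pinned down so as to meet with no slack — the $36$-vertex transversal and the family of $36$ disjoint cycles have to be mutually consistent — and although checking any proposed choice is only a finite computation on the $120$-vertex digraph $\PSG_5$, finding choices that match at $36$ rather than leaving a gap between the bounds requires a careful (and, if one prefers, machine-checked) search.
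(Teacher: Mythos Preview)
Your proposal is an honest outline, but it stops short of a proof: neither the $36$-element transversal nor the family of $36$ disjoint cycles is ever actually written down, and in the upper bound half the approach you sketch (a cluster/bookmark potential in the style of \Cref{thm:cycle-trans}) is considerably more fragile than what the paper does. The paper's transversal is much simpler and is worth knowing: take $S$ to be $\{12345,54321\}$ together with all permutations having a \emph{bump} at position~$2$ (i.e.\ $u_2>u_1$ and $u_2>u_3$), so $|S|=2+5!/3=42$; then remove the six permutations $S_0$ that have bumps at positions $2$ and $4$ and satisfy $u_3=1$, giving $|S'|=36$. The verification is short: any cycle with no bump at all is forced to contain $12345$ or $54321$; otherwise it contains a vertex $u$ with a bump at position~$2$, and if $u\in S_0$ then two steps forward along the cycle produces a vertex $v$ with a bump at position~$2$ and $v_1<v_3$, whence $v_3\ne 1$ and $v\in S'\setminus S_0$. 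No potential function on permutations of $[3]$ is needed, and nothing needs to be ``absorbed by hand.''

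For the lower bound the paper, like you, resorts to a search and simply lists a family $\CC$ of $36$ disjoint cycles covering all $120$ vertices. However, its shape is quite different from the one you propose: it has cycle-length profile $(t_1,\ldots,t_5)=(2,6,8,18,2)$, so only \emph{two} of the $5$-cycles survive, whereas your plan keeps fourteen shift $5$-cycles intact and tries to repackage six others into fourteen short cycles. That is a much tighter local constraint (thirty vertices into fourteen cycles forces an average length barely above $2$, so almost all must be $2$-cycles drawn from cross-edges among just six shift classes), and you give no evidence that such a repackaging exists. The paper's family instead breaks most shift cycles apart and reassembles the pieces as $4$-cycles; if you pursue a search, that is the more promising target.
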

\begin{proof}
For the upper bound, let $S$ be the set of vertices in $\PSG_5$ consisting of the identity $12345$, its reverse $54321$, and vertices $(u_1,\ldots,u_5)$ with a bump at position $2$.  Let $S_0$ be the vertices $(u_1,\ldots,u_5)$ that have bumps in positions 2 and 4 and have $u_3=1$.  Note that $|S|=2+(5!/3)=42$ and $|S_0| = \binom{4}{2}=6$ since a vertex $(u_1,\ldots,u_5)\in S_0$ is determined by the choice of the unordered pair $\{u_1,u_2\}$ from $\{2,3,4,5\}$.  Let $S' = S-S_0$ and note that $|S'| = |S| - |S_0| = 36$.  We claim that $S'$ is a cycle transversal.  Let $C$ be a cycle in $\PSG_5$.  Suppose that no vertex in $C$ has a bump and let $u\in V(C)$ with $u=(u_1,\ldots,u_5)$.  If $u=54321$, then $C$ and $S'$ intersect.  Otherwise $u_i < u_{i+1}$ for some $i$ with $1\le i\le 4$ and it follows that $(u_i,\ldots,u_r)$ is increasing since $u$ has no bump.  Moreover, since the successor $v$ of $u$ in $C$ also has no bump, it follows that $(v_{i-1},\ldots,v_r)$ is increasing; eventually, we see that $12345 \in V(C)$ and so $C$ and $S'$ intersect.  

Hence we may assume that some vertex in $C$ has a bump, and by taking successors it follows that $C$ contains a vertex $u$ with a bump at position $2$.  If $u\not\in S_0$, then $C$ and $S'$ intersect.  So we may assume that $u\in S_0$ and so $u=(u_1,\ldots,u_5)$ where $u$ has a bump at positions $2$ and $4$ with $u_3=1$.  Note that $u_3<u_5$.  Advancing twice along $C$ gives a vertex $v$ with $v=(v_1,\ldots,v_5)$ such that $v_1 < v_3$ and $v$ has a bump in position $2$.  It follows that $v\in S$.  Also, since $v_3>v_1$ we cannot have $v_3=1$ and so $v\not\in S_0$.  It follows that $v\in S'$ and so $C$ and $S'$ intersect.  Since $C$ and $S'$ intersect in all cases, $S'$ is a cycle transversal and so $\trans{}{\PSG_5} \le |S'| = 36$.

For the lower bound, we present a disjoint family $\CC$ of cycles in $\PSG_r$ with $|\CC| = 36$ and $\bigcup_{C\in\CC} V(C) = V(\PSG_5)$.  The family was obtained via computer search.  The family $\CC$ has $t_j$ cycles of length $j$, where $(t_1,\ldots,t_5) = (2,6,8,18,2)$.  For readability, we group the cycles by their lengths.

\begin{align*}
\CC = \{&\cycle{12345},\cycle{54321}, \\
\\
&\cycle{13254,21435},\cycle{14253,31425},
\cycle{15243,51423},\cycle{34251,32415},\cycle{35241,52413},\cycle{45231,53412},\\
\\
&\cycle{12435,13245,21354},\cycle{13524,25134,51342},\cycle{14523,34125,41352},\cycle{15324,53142,41532}\\
&\cycle{32541,25314,52143},\cycle{23514,24135,42351},\cycle{24315,43152,42531},\cycle{45312,54231,53421}\\
\\
&\cycle{12354,12534,14235,31245},\cycle{12453,13425,23145,21345},
\cycle{12543,15423,54123,51243},\\
&\cycle{32451,34521,34215,32145},\cycle{35421,54312,54132,52431},
\cycle{43521,45321,54213,53241},\\ &\cycle{13452,24513,35124,41235},\cycle{23451,34512,45123,51234},
\cycle{13542,25413,43125,31254},\\ &\cycle{23541,35412,53124,41253},\cycle{14352,32514,25143,41325},
\cycle{24351,42513,35142,51324},\\ &\cycle{14532,35214,42135,21453},\cycle{24531,45213,52134,31452},
\cycle{15342,42315,24153,31524},\\ &\cycle{25341,52314,34152,41523},\cycle{15432,43215,32154,21543},
\cycle{25431,53214,42153,31542},\\
\\
&\cycle{15234,52341,23415,23154,21534},\cycle{43251,43512,45132,51432,14325}\}
\end{align*}
\end{proof}

It would be interesting to have general constructions of large disjoint families of cycles in $\PSG_r$ for general $r$.  We note that disjoint cycles in $\PSG_r$ correspond to edge-disjoint cycles in the multigraph variant of $\PSG_{r-1}$ where each shift edge has multiplicity $2$ (and other edges have multiplicity $1$); like $\PSG_{r-1}$, this multigraph variant is an Eulerian digraph and hence decomposes into cycles.  Unfortunately, some cycles in the decomposition may be long, and the size of the decomposition may be small.  We also note that each vertex in $\PSG_r$ is contained in an $(r-1)$-cycle.  However, these shorter cycles do not seem to easily lead to disjoint families like the shift cycles do.

\section{Density Results}\label{sec:density}

The \emph{falling factorial}, denoted $n_{(r)}$, is $n(n-1)\cdots (n-(r-1))$.  In a \emph{complete $r$-digraph}, every $r$-tuple of distinct vertices is an edge; equivalently, a complete $r$-digraph is an $(r,r!)$-tournament.  The complete $n$-vertex $r$-digraph has $n_{(r)}$ edges.

In this section, we show that if $G$ is an $n$-vertex $r$-digraph with $|E(G)| \ge (1-\frac{1}{r})n_{(r)}$, then $G$ contains a path whose size increases with $n$.  Conversely, for each positive $\vep$, there are infinitely many $r$-digraphs $G$ with $|E(G)| \ge (1-\frac{1}{r}-\vep)n_{(r)}$, where $n=|V(G)|$, but all paths in $G$ have size at most $c$, where $c$ is a constant depending only on $\vep$.  In this sense, $1-\frac{1}{r}$ is the \emph{density threshold} for growing paths in $r$-digraphs.

\begin{lem}\label{lem:density-lower}
Let $0 < \vep < 1$ and $r\ge 2$.  For infinitely many $n$, there exists an $n$-vertex $r$-digraph $G$ with $|E(G)| \ge (1-\frac{1}{r}-\vep)n_{(r)}$ such that every path in $G$ has at most $\frac{r^3}{\vep}$ vertices.
\end{lem}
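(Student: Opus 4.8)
The plan is to build, for infinitely many $n$, a ``blow-up'' $r$-digraph on a bounded number of equally sized blocks in which the block index of the first vertex of an edge is never a weak maximum among the block indices of the edge's vertices; a counting argument then controls the density, and an induction on the number of distinct block indices occurring along a path controls the path length. Concretely, set $m=\lceil 2r/\vep\rceil$, let $n$ range over the multiples of $m$ with $n\ge r^3/\vep$, partition $[n]$ into blocks $B_1,\dots,B_m$ of size $n/m$, and write $\mathrm{bl}(u)=a$ when $u\in B_a$. Let $G$ be the $r$-digraph on $[n]$ whose edges are the tuples $(u_1,\dots,u_r)$ of distinct vertices such that $\mathrm{bl}(u_j)>\mathrm{bl}(u_1)$ for some $j\ge 2$; equivalently, the non-edges are exactly the tuples with $\mathrm{bl}(u_1)=\max_j \mathrm{bl}(u_j)$. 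There are infinitely many admissible $n$, so it suffices to verify the two required properties for this $G$.

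The main step, and the place where it matters that $m$ is a constant, is bounding the length of a tight path $v_1\cdots v_s$ in $G$. Put $b_i=\mathrm{bl}(v_i)$. For every $i$ with $1\le i\le s-r+1$ the interval $(v_i,\dots,v_{i+r-1})$ is an edge, so $b_i<\max(b_{i+1},\dots,b_{i+r-1})$. I claim that every integer sequence $b_1,\dots,b_s$ with this property and using at most $d$ distinct values satisfies $s\le d(r-1)$; taking $d\le m$ then gives $s\le m(r-1)$. The claim follows by induction on $d$. If $d=1$, the inequality $b_i<\max(b_{i+1},\dots,b_{i+r-1})$ can never hold, so there is no admissible $i$ and $s\le r-1$. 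If $d\ge 2$, let $M$ be the largest value occurring and $q$ the least index with $b_q=M$; since $b_i=M$ contradicts $b_i<\max(b_{i+1},\dots,b_{i+r-1})$, every occurrence of $M$ is at an index exceeding $s-r+1$, so $q\ge s-r+2$. The prefix $b_1,\dots,b_{q-1}$ omits $M$, hence uses at most $d-1$ distinct values, and it still has the property, because each of its $r$-intervals begins at an index $i\le q-r\le s-r+1$ where the original inequality applies; by induction $q-1\le(d-1)(r-1)$, so $s\le q+r-2\le d(r-1)$. Thus every tight path of $G$ has at most $m(r-1)\le(2r/\vep+1)(r-1)$ vertices. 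Multiplying the desired bound through by $\vep$, one needs $2r(r-1)+(r-1)\vep\le r^3$, and this holds for $r\ge 2$ and $0<\vep<1$ because $2r(r-1)+(r-1)\vep<(r-1)(2r+1)\le 2r^2\le r^3$.

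For the density, observe that a non-edge $(u_1,\dots,u_r)$ is determined by the value $a=\mathrm{bl}(u_1)$ together with a choice of $u_1\in B_a$ (at most $n/m$ choices) and of distinct $u_2,\dots,u_r$ in $B_1\cup\dots\cup B_a$ (at most $(an/m)^{r-1}$ choices). Summing over $a$, the number of non-edges is at most $(n/m)^r\sum_{a=1}^m a^{r-1}\le(n/m)^r\cdot\frac{(m+1)^r}{r}=\frac{n^r}{r}\bigl(1+\tfrac1m\bigr)^r$. Since $\bigl(1+\tfrac1m\bigr)^r\le e^{r/m}\le e^{\vep/2}\le 1+\vep$ and $n_{(r)}\ge n^r\bigl(1-\tfrac{r(r-1)}{n}\bigr)$, we obtain $|E(G)|/n_{(r)}\ge 1-\tfrac1r\cdot\frac{1+\vep}{1-r(r-1)/n}$. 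Now $r(r-1)/n\le(r-1)\vep/r^2$ because $n\ge r^3/\vep$, and an elementary estimate (which reduces to $\tfrac1{r^2}+\tfrac{\vep}{r}\le 1$) shows $\frac{1+\vep}{1-(r-1)\vep/r^2}\le 1+r\vep$ for all $r\ge 2$ and $\vep\in(0,1]$; hence $|E(G)|/n_{(r)}\ge 1-\tfrac1r-\vep$, as required.

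The only genuine obstacle is locating the construction. Tournament-type constructions from \Cref{sec:2-Thresholds-general-r} cannot reach arbitrarily small $\vep$: a bounded-path $(r,k)$-tournament has $k\le \acyc(\PSG_r)$, whose density is bounded away from $1-\tfrac1r$ by \Cref{cor:growing-threshold-bounds}; and forbidding the single pattern ``$u_1$ is the maximum'' directly on the vertices themselves still admits a tight path through all $n$ vertices (any increasing ordering). Replacing the $n$ vertex-ranks by only $m=O(r/\vep)$ blocks keeps the path bound $m(r-1)$ independent of $n$ at the cost of merely a factor $\bigl(1+\tfrac1m\bigr)^r\to 1$ in the density deficit, and the engine of the path bound is the running-maximum observation that $b_i<\max(b_{i+1},\dots,b_{i+r-1})$ forces the largest block index to occur only within the last $r-1$ positions, which is exactly what drives the induction.
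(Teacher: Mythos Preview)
Your proof is correct and uses the same construction as the paper: partition $[n]$ into equal blocks and declare $(u_1,\dots,u_r)$ an edge iff the block index of $u_1$ is not the maximum. Your path bound is the same ``the maximum block index appears only in the last $r-1$ positions'' argument, just phrased as an induction on the number of distinct block indices rather than as extracting an increasing subsequence; the content is identical. The one minor difference is the density count: the paper conditions on the event that all $r$ vertices lie in distinct blocks (where the non-edge probability is exactly $1/r$) and takes $t=\lceil r^2/\vep\rceil$ blocks, whereas you count non-edges directly via $\sum_a a^{r-1}\le (m+1)^r/r$, which lets you get away with only $m=\lceil 2r/\vep\rceil$ blocks. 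Both routes land at the same $r^3/\vep$ bound on the path length, so this is a cosmetic improvement rather than a genuinely different approach.
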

\begin{proof}
Fix a parameter $t$, and let $n$ be a multiple of $t$.  We construct an $r$-digraph $G$ with vertex set $[n]$ as follows.  Partition $[n]$ into $t$ intervals $X_1,\ldots,X_t$ of equal size.  Given an $r$-tuple $(u_1,\ldots,u_r)$, we put $(u_1,\ldots,u_r)\in E(G)$ if and only if $u_1 \in X_p$ and $u_j \in X_q$ for some $j,p,q$ with $p<q$.  By construction, if $v_1\ldots v_s$ is a path in $G$, then we obtain a subsequence of vertices belong to parts with increasing indices of size at least $\ceil{s/(r-1)}$.  It follows that $s\le t(r-1)$.

To bound $|E(G)|$, consider choosing an $r$-set $R$ from $[n]$ at random, and then selecting a random ordering of $R$ to obtain an $r$-tuple $(u_1,\ldots,u_r)$.  Let $A$ be the event that the vertices in $R$ belong to distinct parts, and note that $\Pr(A) \ge \prod_{j=0}^{r-1}\left(1-\frac{j}{t}\right) > \left(1-\frac{r}{t}\right)^r$.  Let $B$ be the event that $(u_1,\ldots,u_r)\in E(G)$.  Conditioned on $A$, we have $\Pr(B|A) = 1-\frac{1}{r}$ since ordering $R$ yields an edge in $G$ unless the vertex belonging to the highest-indexed part is placed in the first position.  It follows that $|E(G)| = \Pr(B)\cdot n_{(r)} \ge \Pr(B|A)\Pr(A)\cdot n_{(r)} > \left(1-\frac{1}{r}\right)\left(1-\frac{r}{t}\right)^r \cdot n_{(r)}$.

We choose $t$ large enough so that $\left(1-\frac{1}{r}\right)\left(1-\frac{r}{t}\right)^r \ge 1-\frac{1}{r} - \vep$; using calculus, we have that $t\ge \frac{r^2}{\vep}$ suffices, so set $t=\ceil{r^2/\vep}$.  Since every path in $G$ has at most $t(r-1)$ vertices and $t(r-1)\le (\frac{r^2}{\vep} + 1)(r-1) \le \frac{r^3}{\vep}$, the claimed bound follows.
\end{proof}

By \Cref{lem:cycle-linear}, if $G$ has $\omega(n^{r-1})$ copies of $C^{(r)}_r$, then $G$ has a path on $\omega(1)$ vertices.  Let $G$ be an $n$-vertex $r$-digraph.  A set $S$ of $r$ vertices is \emph{sparse} if $G[S]$ has fewer than $(1-\frac{1}{r})r!$ edges, is \emph{dense} if $G[S]$ has more than $(1-\frac{1}{r})r!$ edges, and is \emph{balanced} if $G[S]$ has exactly $(1-\frac{1}{r})r!$ edges.  Since $G[S]$ contains a copy of $C^{(r)}_r$ when $S$ is dense, $n$-vertex $r$-digraphs with $\omega(n^{r-1})$ dense $r$-sets contain growing paths.

\begin{lem}\label{lem:density-upper}
  For all positive integers $r$ and $s$ with $r\ge 2$, there is a constant $n_0=n_0(r,s)$ such that if $n\ge n_0$ and $G$ is an $n$-vertex $r$-digraph and $P^{(r)}_s \not\subseteq G$, then $|E(G)| < (1-\frac{1}{r})\cdot n_{(r)}$.
\end{lem}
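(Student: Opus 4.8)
The plan is to prove the contrapositive: for $n$ sufficiently large, if $|E(G)|\ge(1-\frac{1}{r})n_{(r)}$ then $P^{(r)}_s\subseteq G$. Set $c=(1-\frac{1}{r})r!=r!-(r-1)!$ (an integer) and classify each $r$-set $S$ as dense, balanced, or sparse according as $|E(G[S])|$ is $>c$, $=c$, or $<c$. The starting observation is a double count: writing $d$ and $p$ for the numbers of dense and sparse $r$-sets, and using $|E(G[S])|\le r!=c+(r-1)!$ on dense sets and $|E(G[S])|\le c-1$ on sparse sets, one gets $(1-\frac{1}{r})n_{(r)}\le|E(G)|=\sum_S|E(G[S])|\le c\binom{n}{r}+(r-1)!\,d-p=(1-\frac{1}{r})n_{(r)}+(r-1)!\,d-p$, so $p\le(r-1)!\,d$. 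Thus there can be many sparse $r$-sets only if there are many dense ones.

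I would then split on $d$. First recall, as in the proof of \Cref{thm:linear-thresh}, that a dense $r$-set misses fewer than $(r-1)!$ of the $r!$ orderings of its vertices, so---since those orderings split into $(r-1)!$ cyclic-rotation classes of size $r$---at least one class survives intact and $G[S]$ contains a copy of $\Cr{r}$; distinct dense sets give distinct copies, so $G$ has at least $d$ copies of $\Cr{r}$. If $d\ge(s-r+1)\,n_{(r-1)}$, then \Cref{lem:cycle-linear} already yields a path on at least $\frac{d}{n_{(r-1)}}+(r-1)\ge s$ vertices, and we are done. So assume $d<(s-r+1)\,n_{(r-1)}$; then $p<(r-1)!\,(s-r+1)\,n_{(r-1)}$ as well, and the number of $r$-sets that are \emph{not} balanced is $O_{r,s}(n^{r-1})=o(n^r)$.

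In this remaining case I would extract a large balanced sub-tournament. Regard the non-balanced $r$-sets as the edges of an $r$-graph $\mathcal H$ on $[n]$; since $|E(\mathcal H)|=O_{r,s}(n^{r-1})$, the de Caen bound (exactly as used in the proof of \Cref{lem:acyclic}) gives an independent set $W$ of $\mathcal H$ with $|W|=\Omega_{r,s}(n^{1/(r-1)})$. Every $r$-subset of $W$ is balanced, so $G[W]$ is an $(r,c)$-tournament with $c=(1-\frac{1}{r})r!$. The crucial point is that $c$ lies strictly above the growing-path threshold for tournaments: the $r!/r$ shift cycles of $\PSG_r$, together with the extra cycle obtained by applying \Cref{lem:split-cycle} to the shift cycle through the identity permutation (which carries a loop), form $r!/r+1$ disjoint cycles, so $\acyc(\PSG_r)=r!-\trans{}{\PSG_r}\le c-1<c$. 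Hence by \Cref{thm:growing-path-tourn-threshold} there is $N_0=N_0(r,s)$ with $f(N,r,c)\ge s$ for all $N\ge N_0$; taking $n_0(r,s)$ large enough that the guaranteed $W$ satisfies $|W|\ge N_0$, we conclude $P^{(r)}_s\subseteq G[W]\subseteq G$.

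The main obstacle is precisely this last point. A priori, ``almost every $r$-set is balanced'' does not force long paths---the constructions of \Cref{lem:density-lower} have exactly that property---so one cannot argue purely by density and must instead exploit that the balanced orientation value $c=(1-\frac{1}{r})r!$ strictly exceeds $\acyc(\PSG_r)$, which is what makes a balanced sub-tournament of unbounded order already contain $P^{(r)}_s$. The remaining ingredients---the double count, the rotation-class pigeonhole, \Cref{lem:cycle-linear}, and the independent-set bound---are routine.
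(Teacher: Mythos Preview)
Your proof is correct and follows essentially the same approach as the paper: classify $r$-sets as sparse/balanced/dense, use \Cref{lem:cycle-linear} on the dense sets, use de~Caen to extract a large set on which the induced tournament is (at least) an $(r,c)$-tournament with $c=(1-\frac{1}{r})r!$, and finish via $c>\acyc(\PSG_r)$. The paper organizes the same ingredients as a direct argument---bounding $|E(G)|$ through the maximum clique $L$ in the hypergraph of balanced-or-dense $r$-sets and then running the edge count---whereas you take the contrapositive, do the edge count first (your double count $p\le (r-1)!\,d$), and extract an independent set in the non-balanced hypergraph instead; the substance is identical.
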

\begin{proof}
Let $\alpha$, $\beta$, and $\gamma$ be the fraction of $r$-sets in $V(G)$ which are sparse, balanced, and dense, respectively.  Since each dense $r$-set contains a copy of $C^{(r)}_r$, it follows from \Cref{lem:cycle-linear} that the number of dense $r$-sets $\gamma\binom{n}{r}$ satisfies $\gamma\binom{n}{r} \le (s-r+1)n_{(r-1)}$ or equivalently $\gamma \le \frac{s-r+1}{n-r+1} r!= O(1/n)$.  Hence $\gamma \to 0$ as $n\to \infty$.

Let $H$ be the $r$-graph on $V(G)$ such that $e\in E(H)$ if and only if $e$ is a balanced or dense $r$-set in $G$, let $L$ be a maximum clique in $H$, and let $\ell = |L|$.  Note that each $r$-set $S$ of vertices in $L$ is an edge in $H$ and hence a dense or balanced $r$-set in $G$.  It follows that $G[L]$ contains an $\ell$-vertex $(r,k)$-tournament $G_0$, where $k=[(1-1/r)r!]$.  Since $k>\acyc(\PSG_r)$ and $G_0$ has no path on $s$ vertices, it follows from \Cref{thm:const-path-thresh} that $\ell$ is a bounded by a function of the constants $r$ and $s$.  Since $H$ is an $n$-vertex $r$-graph with no clique of size $\ell+1$, it follows from de Caen's bound~\cite{deCaen} that $|E(H)|\le (1-1/\binom{\ell}{r-1} + o(1))\binom{n}{r}$.  Since $|E(H)| = (\beta + \gamma)\binom{n}{r}$, the number of non-edges in $H$ is $\alpha\binom{n}{r}$, and it follows that $\alpha\binom{n}{r} \ge (1/\binom{\ell}{r-1} - o(1))\binom{n}{r}$ and hence $\alpha/r! \ge (1/(r\ell_{(r-1)})) - o(1)$.

We compute 
\begin{align*}
|E(G)| &\le \alpha\binom{n}{r}\cdot \left(1-\frac{1}{r}-\frac{1}{r!}\right)r! + \beta\binom{n}{r}\cdot \left(1-\frac{1}{r}\right)r! + \gamma\binom{n}{r}\cdot r!\\
&= \left(\alpha\left(1-\frac{1}{r}-\frac{1}{r!}\right) + \beta \left(1-\frac{1}{r}\right) + \gamma\right)n_{(r)}\\
&= \left(\left(1-\gamma\right)\left(1-\frac{1}{r}\right) + \gamma - \frac{\alpha}{r!}\right)n_{(r)}\\
&= \left(1-\frac{1}{r} - \frac{\alpha}{r!} + \frac{\gamma}{r}\right)n_{(r)}\\
&\le \left(1-\frac{1}{r} - \frac{1}{r\ell_{(r-1)}} + \frac{\gamma}{r} + o(1)\right)n_{(r)}.
\end{align*}
Hence $|E(G)|<(1-(1/r))n_{(r)}$ provided that $\frac{\gamma}{r} + o(1) < \frac{1}{r\ell_{(r-1)}}$.  Recalling that $\ell$ is a constant bounded by a function of $r$ and $s$ and $\gamma\to 0$ as $n\to\infty$, the lemma follows by taking $n_0$ large enough.
\end{proof} 

Taking the contrapositive of \Cref{lem:density-upper} gives the following.
\begin{cor}\label{cor:density-upper}
For all positive integers $r$ and $s$ with $r\ge 2$, there is a constant $n_0 = n_0(r,s)$ such that if $n\ge n_0$ and $G$ is an $n$-vertex $r$-digraph with $|E(G)| \ge (1-\frac{1}{r})n_{(r)}$, then $P^{(r)}_s \subseteq G$.
\end{cor}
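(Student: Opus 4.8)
The plan is to obtain this corollary as the direct contrapositive of \Cref{lem:density-upper}, so essentially no new argument is required. First I would fix positive integers $r$ and $s$ with $r\ge 2$ and let $n_0 = n_0(r,s)$ be exactly the constant supplied by \Cref{lem:density-upper}. Then I would take an arbitrary $n$-vertex $r$-digraph $G$ with $n\ge n_0$ and $|E(G)|\ge \left(1-\frac{1}{r}\right)n_{(r)}$, and argue by contradiction: if $P^{(r)}_s\not\subseteq G$, then \Cref{lem:density-upper} applies (since $n\ge n_0$) and forces $|E(G)| < \left(1-\frac{1}{r}\right)n_{(r)}$, contradicting the hypothesis on the number of edges. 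Hence $P^{(r)}_s\subseteq G$, which is precisely the desired conclusion.

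There is no genuine obstacle here. The only point requiring care is that the constant $n_0$ and the quantifier structure match those of \Cref{lem:density-upper} verbatim (``for all $r,s$ with $r\ge 2$ there exists $n_0=n_0(r,s)$ such that for all $n\ge n_0$ and all $n$-vertex $r$-digraphs $G$\ldots''), so that the logical negation is clean. Since the lemma is stated in exactly that form, transposing the implication ``$P^{(r)}_s\not\subseteq G \implies |E(G)| < \left(1-\frac{1}{r}\right)n_{(r)}$'' into ``$|E(G)|\ge \left(1-\frac{1}{r}\right)n_{(r)} \implies P^{(r)}_s\subseteq G$'' finishes the proof, with the same $n_0$ witnessing the existential quantifier.
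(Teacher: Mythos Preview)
Your proposal is correct and matches the paper's approach exactly: the paper states this corollary with the single line ``Taking the contrapositive of \Cref{lem:density-upper} gives the following.'' Your remarks about matching the quantifier structure and reusing the same $n_0$ are appropriate and sufficient.
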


Recall from \Cref{cor:growing-threshold-bounds} that when $k=(1-\frac{1}{r} - \frac{\varphi(r)-1}{r!})r!$, we have $k>\acyc(\PSG_r)$ and so the $(r,k)$-tournaments have growing paths.  These tournaments have $(1-\frac{1}{r}-\frac{\varphi(r)-1}{r!})n_{(r)}$ edges, corresponding to an edge density of $1-\frac{1}{r} - \frac{\phi(r)-1}{r!}$.  For $r\ge 3$, we have $\phi(r)\ge 2$ and so at this density, growing paths are forced in tournaments but they are not quite yet forced in general $r$-digraphs.

\section{Conclusions}

Although obtaining the exact threshold on $k$ for growing paths in $(r,k)$-tournaments appears to be difficult, obtaining the exact threshold on $k$ for growing cycles in $(r,k)$-tournaments is easy.

\begin{prop}
Let $k = (1-\frac{1}{r} + \frac{1}{r!})r!$.  If $G$ is an $n$-vertex $(r,k)$-tournament, then $C^{(r)}_{n'}\subseteq G$ for some $n'$ that grows with $n$.
\end{prop}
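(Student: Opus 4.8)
The plan is to combine the Ramsey-theoretic argument from the proof of \Cref{thm:const-path-thresh} with an explicit realization of repeated laps around a single shift cycle of $\PSG_r$ as an arbitrarily long tight cycle. Note first that $k = \left(1-\frac{1}{r}\right)r! + 1 = r! - (r-1)! + 1$, so every $r$-set of vertices of $G$ supports exactly $(r-1)!-1$ non-edges.

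I would begin by coloring $\binom{[n]}{r}$ as in \Cref{thm:const-path-thresh}: for each $r$-set $A$, let $f(A) \in \binom{V(\PSG_r)}{k}$ be the set of canonical patterns of the $k$ edges of $G$ that are orderings of $A$. Since $n$ is large, Ramsey's Theorem produces an $m$-vertex subdigraph $G_0$ all of whose $r$-sets receive one common color $S$, with $m = m(n) \to \infty$ as $n \to \infty$; thus an $r$-tuple of distinct vertices of $G_0$ is an edge of $G_0$ if and only if its canonical pattern lies in $S$. Because the $(r-1)!$ shift cycles of $\PSG_r$ partition $V(\PSG_r)$ into classes of size $r$ and $|\comp S| = r! - k = (r-1)! - 1$, the set $\comp S$ fails to meet at least one shift cycle $Z$, so $V(Z) \subseteq S$. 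Let $\pi = (\pi_1,\ldots,\pi_r)$ be the permutation of $[r]$ whose forward shifts trace out $Z$; as $\pi$ has distinct entries it is aperiodic, so $Z$ is a genuine directed $r$-cycle on $r$ vertices.

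The crux is to realize ``$N$ laps around $Z$'' as a tight cycle on $rN$ vertices. Fix an integer $N \ge 1$ and a real $\vep$ with $0 < \vep < 1/N$, and consider the cyclic sequence of $rN$ values $v_1,\ldots,v_{rN}$ given by $v_{rc+j} = \pi_j + \vep c$ for $c \in \{0,\ldots,N-1\}$ and $j \in \{1,\ldots,r\}$. These values are pairwise distinct: two with the same position $j$ differ in their perturbation term, and two with positions $j \ne j'$ differ by at least $|\pi_j - \pi_{j'}| - \vep(N-1) \ge 1 - \vep(N-1) > 0$. Each cyclic $r$-interval $(v_i,\ldots,v_{i+r-1})$ contains exactly one value from each position $1,\ldots,r$ — the higher positions coming from one round and the lower positions from the next, with the round index jumping by $N-1$ in the $r-1$ intervals that wrap past $v_{rN}$ — and since every perturbation lies in $[0,1)$, the order of these $r$ values agrees with the order of the corresponding entries of $\pi$. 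Hence $\canptn(v_i,\ldots,v_{i+r-1})$ is exactly the appropriate cyclic shift of $\pi$, which is a vertex of $Z$. I would then choose $rN \le m$ vertices of $G_0$ with the same relative order as $v_1,\ldots,v_{rN}$ and arrange them cyclically in that order: every $r$-interval of this arrangement has canonical pattern in $V(Z) \subseteq S$, hence is an edge of $G_0$. Therefore $\Cr{rN} \subseteq G_0 \subseteq G$, and taking $N = \floor{m/r}$ gives $n' = rN$, which tends to infinity with $n$.

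I expect the main (and essentially only) obstacle to be the bookkeeping in the third paragraph: correctly tracking, for each cyclic $r$-interval — including the $r-1$ intervals straddling the join at $v_{rN}$ — which round each of its coordinates belongs to, and checking that the resulting canonical pattern is precisely the cyclic shift of $\pi$ that names the corresponding vertex of $Z$. The Ramsey step and the pigeonhole over shift cycles are both immediate from the earlier development.
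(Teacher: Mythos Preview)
Your proposal is correct and follows essentially the same route as the paper: apply Ramsey's Theorem to obtain a large subtournament whose edges are determined by pattern, locate a full shift cycle of $\PSG_r$ among the edge patterns, and realize repeated laps of that shift cycle as a long tight cycle. The only cosmetic difference is that the paper colors each $r$-set by a single pattern $\pi$ (picked from an $r$-cycle in $G[X]$, so all cyclic shifts of $\pi$ are automatically edges) rather than by the full set $S$ and then pigeonholing over shift cycles, and it describes the cycle realization via an interval partition $A_1,\ldots,A_r$ rather than your explicit $\pi_j + \vep c$ perturbation; both constructions encode the same object.
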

\begin{proof}
Let $G$ be an $(r,k)$-tournament on vertex set $[n]$. Every $r$-set of vertices contains a copy of $C^{(r)}_r$.  Label each $r$-set $X$ by the canonical pattern of some edge in an $r$-cycle in $G[X]$. By Ramsey Theory, there is a subgraph $H$ of $G$ such that $|V(H)| = rn'$ where $n'$ grows with $n$ and every $r$-set in $V(H)$ is labeled with a common canonical pattern $\pi$. A cycle spanning $H$ can be constructed as follows. Let $A_1,\ldots,A_r$ be a partition of $V(H)$ into $r$ intervals of equal size such that $i<j$ implies $\max(A_i) < \min(A_j)$.  There is a cyclic arrangement $v_1,\ldots,v_{rn'}$ of $V(H)$ such that each $r$-interval consists of vertices in distinct parts in $\{A_1,\ldots,A_r\}$, and for each $i$, we have that $\canptn(v_{i+1},\ldots,v_{i+r})$ is a cyclic shift of $\pi$.  Hence $\cycle{v_1,\ldots,v_{n'}}$ is a spanning cycle in $H$ whose size $n'$ grows with $n$.
\end{proof}

Since the $r!$ tuples on an $r$-set decompose into $(r-1)!$ copies of $\Cr{r}$, avoiding $\Cr{r}$ requires omitting at least $(r-1)!$ edges from each $r$-set.  In fact, when $k=r! - (r-1)!$, it is possible to avoid all closed walks.

\begin{prop}\label{prop:acyclic-tournament}
Let $k=(1-\frac{1}{r})r!$.  For each $n$, there exists an $n$-vertex $(r,k)$-tournament with no closed walk.
\end{prop}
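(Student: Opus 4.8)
The plan is to exhibit one explicit construction that works for every $n$. Take $V(G)=[n]$ and declare an ordered $r$-tuple $(u_1,\ldots,u_r)$ of distinct vertices to be an edge of $G$ precisely when $u_1\neq\max\{u_1,\ldots,u_r\}$; that is, we forbid the largest vertex of an edge from occupying the first position.

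First I would confirm that this $G$ is an $(r,k)$-tournament with $k=(1-\tfrac1r)r!$. Fix an $r$-set $S$ of vertices. Among the $r!$ orderings of $S$, the non-edges are exactly the $(r-1)!$ orderings that place $\max S$ first, so each $r$-set contributes $r!-(r-1)!=(1-\tfrac1r)r!$ edges, and $G$ is an $(r,k)$-tournament with $k=(1-\tfrac1r)r!$ as claimed.

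Next I would rule out closed walks by a global-maximum argument. Suppose, toward a contradiction, that $\cycle{w_1,\ldots,w_m}$ is a closed walk in $G$; then $m\ge r$ and, reading indices cyclically, every $r$-interval $(w_i,w_{i+1},\ldots,w_{i+r-1})$ is an edge. Let $v$ be the largest vertex appearing in the walk and fix an index $p$ with $w_p=v$. The $r$-interval starting at position $p$ is an edge of $G$, yet its first coordinate $w_p=v$ is the maximum among its (pairwise distinct) entries, contradicting the defining rule for $E(G)$. Hence $G$ has no closed walk.

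I do not expect a real obstacle. The two points that deserve a moment's care are: (i) a closed walk contains at least one $r$-interval, so $m\ge r$ and the argument has something to act on; and (ii) repeated vertices along the walk cause no trouble, since the contradiction uses only that the chosen $r$-interval is an edge (hence consists of $r$ distinct vertices) and that its leading entry equals the global maximum $v$. A symmetric construction that instead forbids $\max S$ from the last position — or forbids $\min S$ from the first position — would serve equally well.
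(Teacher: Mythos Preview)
Your proposal is correct and follows essentially the same approach as the paper: the paper uses the identical construction (forbid $(u_1,\ldots,u_r)$ when $u_1=\max\{u_1,\ldots,u_r\}$), counts the $(r-1)!$ omitted orderings per $r$-set, and derives a contradiction by taking the global maximum along a putative closed walk and noting it begins an edge. Your write-up is slightly more careful about the case $m\ge r$ and repeated vertices, but the argument is the same.
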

\begin{proof}
    Construct an $(r,k)$-tournament $G$ on $[n]$ by omitting $(u_1,\ldots,u_r)$ from $E(G)$ if and only if $u_1 = \max\{u_1,\ldots,u_r\}$.  For each $r$-set, we omit $(r-1)!$ edges from $G$, and so $G$ is an $(r,k)$-tournament with $k=r!-(r-1)!=(1-\frac{1}{r})r!$.  Suppose $W$ is a closed walk in $G$ with $W=v_1\ldots v_\ell$.  Let $v_i = \max\{v_1,\ldots,v_\ell\}$ and note that $v_i$ begins an edge in $W$, contrary to the definition of $G$.
\end{proof}

Many interesting open problems remain; perhaps most compelling is the $(3,4)$-tournament conjecture (\Cref{conj:34-spanning}).  It would be notable progress to establish the $(3,4)$-tournament conjecture even for tournaments without closed walks.

\begin{conj}\label{conj:34-acyclic}
If $G$ is a $(3,4)$-tournament with no closed walk, then $G$ has a spanning path.
\end{conj}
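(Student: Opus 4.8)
The plan is to prove \Cref{conj:34-acyclic} by induction on $n=|V(G)|$ via a longest-path extension argument in the spirit of \Cref{prop:35-spanning} and \Cref{thm:hamthrsh}, but exploiting acyclicity far more heavily. The useful reformulation of the hypothesis is that $G$ has no closed walk if and only if the auxiliary digraph $D$ is acyclic, where $V(D)$ is the set of ordered pairs of distinct vertices of $G$ and there is an arc $(a,b)\to(b,c)$ whenever $(a,b,c)\in E(G)$; under this dictionary a tight walk in $G$ is a walk in $D$, and a spanning path of $G$ is a path in $D$ on $n-1$ ordered pairs whose underlying vertex sequence is a permutation of $V(G)$. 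Since forbidding closed walks in particular forbids triangles, \Cref{lem:noDisjointMaxPaths} applies, so the maximum paths of $G$ are pairwise intersecting; this is one of the tools available, and recall also that in a triangle-free $(3,4)$-tournament each triple has exactly one non-edge in each of the two rotation classes.

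For the inductive step let $P=v_1\cdots v_m$ be a maximum path of $G$, suppose for contradiction that $m<n$, and fix $u\notin V(P)$. Maximality gives a list of forbidden configurations: $(v_{m-1},v_m,u)\notin E(G)$ and $(u,v_1,v_2)\notin E(G)$ (else $P$ extends at an end); for each internal position $i$ with $1\le i\le m-1$, at least one of $(v_{i-1},v_i,u)$, $(v_i,u,v_{i+1})$, $(u,v_{i+1},v_{i+2})$ is a non-edge (else $u$ inserts between $v_i$ and $v_{i+1}$); and more elaborate moves must also fail, for instance if $(v_j,v_{j+1},u)\in E(G)$ then $v_1\cdots v_{j+1}u$ is a path ending at the pair $(v_{j+1},u)$ which must itself be ``stuck,'' and reversing a suffix of $P$ before attaching $u$ is constrained by triangle-freeness. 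Translating these failures into statements about which of the six orderings of each relevant triple $\{v_i,v_{i+1},u\}$, $\{v_{i-1},v_i,v_{i+1}\}$ are non-edges, the ``two non-edges per triple'' budget should over-determine the structure around $P$, forcing the edges among $\{u\}\cup V(P)$ into a rigid alternating configuration along $P$, analogous to (but more intricate than) the out-edge analysis in the proof of \Cref{lem:noDisjointMaxPaths}.

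The goal is then to show that this forced configuration contains a closed tight walk on $\{u\}\cup V(P)$, contradicting the hypothesis. The plan is to fix a topological labelling $\phi$ of $D$ (which exists precisely because $G$ has no closed walk) and use it as a monovariant: each admissible rearrangement of $P\cup\{u\}$ that keeps us stuck should be shown either to strictly increase a $\phi$-based potential --- yielding a contradiction after finitely many iterations and a wrap-around --- or to produce the closed walk directly. It may be necessary to strengthen the induction hypothesis to carry an auxiliary flexibility property of the spanning path, as the notion of a flexible path does in \Cref{thm:hamthrsh}, so that the reinsertion step is robust under the choice of which vertex is deleted rather than tied to a single $u$.

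The main obstacle is precisely this ``stuck path'' analysis. Unlike in ordinary graphs, tight paths admit no free P\'osa-type rotation --- reversing a suffix requires each reversed interval to be an edge, which is not automatic --- so the repertoire of path modifications is small, the case analysis is delicate, and the contradiction must come essentially entirely from the local $(3,4)$-constraints together with acyclicity; moreover, when $n-m$ is small the disjoint-maximum-paths lemma \Cref{lem:noDisjointMaxPaths} gives no leverage at all. A parallel route worth pursuing is to first obtain a structural classification of closed-walk-free $(3,4)$-tournaments: the extremal example in \Cref{prop:acyclic-tournament}, in which $(u_1,u_2,u_3)$ is an edge if and only if $u_1$ is not the maximum of the triple, is highly ``layered,'' and if every closed-walk-free $(3,4)$-tournament decomposes into such layers then a direct construction of a spanning path becomes feasible and even \Cref{conj:34-spanning} comes into range.
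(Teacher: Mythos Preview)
The paper does not prove \Cref{conj:34-acyclic}; it states it as an open problem and remarks that a proof would yield $f(n,3,4)\ge\Omega(n^{1/3})$. So there is no paper proof to compare your proposal against, and the relevant question is whether your argument actually closes.

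It does not. What you have written is a research outline, not a proof, and you say so yourself: the crux is the ``stuck path'' analysis, and you describe what you \emph{hope} will happen (``the two-non-edges-per-triple budget should over-determine the structure\ldots'', ``the goal is then to show that this forced configuration contains a closed tight walk'') without carrying any of it out. Concretely, from maximality of $P=v_1\cdots v_m$ and $u\notin V(P)$ you only extract, for each $i$, that one of a short list of triples is a non-edge; this is far from pinning down which of the two non-edges on each triple $\{v_i,v_{i+1},u\}$ actually occurs, and you have not exhibited any closed walk. The appeal to a topological labelling $\phi$ of the auxiliary digraph $D$ is circular as stated: acyclicity of $D$ gives you $\phi$, but you then want the failed-insertion moves to strictly increase a $\phi$-potential ``after a wrap-around,'' which would itself be a closed walk in $D$ --- you are assuming the contradiction you are trying to produce. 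Likewise, the suggestion to strengthen the induction hypothesis with a flexibility property, or to first classify all closed-walk-free $(3,4)$-tournaments, are plausible directions but are not arguments.

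In short: the statement is a genuine open conjecture in the paper, and your proposal identifies reasonable tools (\Cref{lem:noDisjointMaxPaths}, the one-non-edge-per-rotation-class observation, the pair digraph $D$) but does not supply the missing idea. The central gap is that you never derive, from the insertion failures along a maximum path, any concrete closed tight walk or any other contradiction.
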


Proving \Cref{conj:34-acyclic} would imply $f(n,3,4)\ge \Omega(n^{1/3})$, improving \Cref{thm:34-lower}.  It would also be very interesting to generalize some of the techniques in \Cref{ssec:3-uniform-tournaments} to larger $r$.  Our next conjecture requests a generalization of \Cref{thm:34-lower}.

\begin{conj}\label{conj:poly-thresh}
Let $k=(1-\frac{1}{r})r!$.  There is a positive $\vep$ such that $f(n,r,k) \ge \Omega(n^{\vep})$. 
\end{conj}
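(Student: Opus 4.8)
The plan is to follow the template of the proof of \Cref{thm:34-lower}, lifting both of its ingredients to general $r$: the reduction to closed-walk-free tournaments via \Cref{lem:acyclic}, and the structural fact (the analogue of \Cref{lem:noDisjointMaxPaths}) that disjoint maximum paths cannot coexist in a closed-walk-free $(r,k)$-tournament. The target is the following key lemma: \emph{there is $\delta = \delta(r) > 0$ such that every $N$-vertex $(r,k)$-tournament with $k = (1-\frac{1}{r})r!$ and no closed walk contains $\PPr_{s}$ with $s \ge \Omega(N^{\delta})$.} Granting this, the conjecture follows just as before. If $G$ is an $n$-vertex $(r,k)$-tournament whose longest path has $s$ vertices, then since $\PPr_{s+1}\not\subseteq G$, \Cref{lem:acyclic} produces an induced subgraph $H$ with $|V(H)| \ge (1-o(1))(n/(rs))^{1/(r-1)}$ and no closed walk; as an induced subgraph of an $(r,k)$-tournament, $H$ is itself an $(r,k)$-tournament, so the key lemma yields a path in $H$ --- hence in $G$ --- on $\Omega\bigl((n/(rs))^{\delta/(r-1)}\bigr)$ vertices. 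Balancing $s$ against this bound gives $s \ge \Omega(n^{\vep})$ with $\vep = \delta/(r-1+\delta)$; taking $\delta = \frac{1}{2}$ this is $\vep = 1/(2r-1)$, which recovers \Cref{thm:34-lower} when $r = 3$.

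For the key lemma I would imitate \Cref{lem:longpath} and \Cref{lem:noDisjointMaxPaths}. The induction in \Cref{lem:longpath} shows that \emph{if} maximum paths in such a tournament are pairwise intersecting, then an $N$-vertex example already has a path on $\floor{\sqrt{N}}$ vertices: take a maximum path, delete it, find a second maximum-sized path among the $\ge (\sqrt{N}-1)^2$ remaining vertices, and observe that the two disjoint paths cannot both be maximum. So the real content is the generalization of \Cref{lem:noDisjointMaxPaths}: in a closed-walk-free $(r,k)$-tournament, two disjoint maximum paths $A=a_1\cdots a_t$ and $B=b_1\cdots b_t$ lead to a contradiction. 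A first step is a local rigidity property: since $k = r! - (r-1)!$ and $H$ contains no copy of $\Cr{r}$, each $r$-set of vertices omits exactly one ordering from each of the $(r-1)!$ cyclic-rotation classes of its orderings, and global acyclicity constrains further which representative is missing. Using this, I would seek an interleaving of $A$ and $B$ --- a generalization of the alternation $a_t b_t a_{t-1} b_{t-1}\cdots$ used when $r = 3$, perhaps blockier, with runs of length $r-1$ --- whose every $r$-interval is forced to be an edge: either it already lies in $A$ or $B$, or its absence would, via the rigidity property, force a longer maximum path. As in the $(3,4)$ argument one would first dispose of ``out edges'' joining $A$ to $B$ (which directly extend a maximum path), use this to fix an extremal choice, and then check that the interleaving succeeds in their absence.

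The main obstacle is exactly this interleaving step. For $r\ge 4$ an $r$-interval of the interleaved sequence mixes several vertices of $A$ with several of $B$, so the forcing argument must control many orderings at once, and no single fixed interleaving pattern is obviously forced to be a path; I expect the interleaving must be chosen adaptively, reading off which orderings are present in the relevant $r$-sets, and that the full strength of acyclicity (not merely $\Cr{r}$-freeness) is needed to kill the bad case. A natural warm-up with independent value is $r = 3$: proving \Cref{conj:34-acyclic}, or even just that closed-walk-free $(3,4)$-tournaments have paths of size $N^{\delta}$ for some $\delta>0$, would already improve \Cref{thm:34-lower} and should reveal whether the interleaving approach scales. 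Finally, since \emph{any} $\delta>0$ suffices for the conjecture, a lossy structural statement --- for instance bounding the number of pairwise disjoint maximum paths by $N^{1-\delta}$ rather than by $1$ --- would also do, and aiming for that weaker conclusion may make the combinatorics considerably more forgiving.
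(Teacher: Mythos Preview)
This statement is \emph{Conjecture}~\ref{conj:poly-thresh} in the paper, not a theorem; the paper offers no proof and explicitly presents it as an open problem requesting a generalization of \Cref{thm:34-lower}.  So there is no paper proof to compare against.

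Your proposal is not a proof but a research plan, and you identify the gap yourself: everything hinges on the analogue of \Cref{lem:noDisjointMaxPaths} for general $r$, and you do not establish it.  The reduction via \Cref{lem:acyclic} and the bootstrapping of \Cref{lem:longpath} are sound and do generalize verbatim, so the conjecture is indeed equivalent to your ``key lemma'' (in fact to the weaker statement that closed-walk-free $(r,k)$-tournaments cannot have boundedly many pairwise disjoint maximum paths).  But the interleaving argument in \Cref{lem:noDisjointMaxPaths} relies on a very tight dichotomy specific to $r=3$: a missing edge on a triple forces \emph{both} of the other two edges in its rotation class, so one non-edge pins down two edges at once.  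For $r\ge 4$ a missing edge only forces the remaining $r-1$ edges in its rotation class, and an $r$-interval of any interleaved sequence mixes vertices from $A$ and $B$ in a way that does not reduce to a single rotation-class constraint.  This is exactly where the paper's authors are stuck too, so your obstacle is genuine, not a gap in exposition.

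One small correction: your suggested ``warm-up'' --- showing closed-walk-free $(3,4)$-tournaments have paths of size $N^{\delta}$ for some $\delta>0$ --- is already done.  \Cref{lem:longpath} gives $\delta=\tfrac12$ (no closed walk implies no triangle), and feeding that into your balancing yields exactly \Cref{thm:34-lower}.  To \emph{improve} \Cref{thm:34-lower} along these lines you would need $\delta>\tfrac12$; the paper notes that \Cref{conj:34-acyclic} (i.e.\ $\delta=1$) would give $f(n,3,4)\ge\Omega(n^{1/3})$.
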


We note that \Cref{thm:linear-thresh} shows that when $k$ is just one larger than the value in \Cref{conj:poly-thresh}, we have that $f(n,r,k)$ is linear in $n$.

\section{AI Declaration}
During the preparation of this work the authors used AI tools such as ChatGPT, Gemini, and Claude to assist in a search for large families of disjoint cycles in $\PSG_r$ and in an attempt to find a counter-example or proof of the $(3,4)$-tournament conjecture.  The text in this article was written by the authors and the authors take full responsibility for the content of this article.

\printbibliography

\end{document}